\newtheorem{definition}{Definition}[section]
\newtheorem{theorem}{Theorem}[section]
\newtheorem{lemma}[theorem]{Lemma}
\newtheorem{corollary}[theorem]{Corollary}
\newtheorem{proposition}{Proposition}[section]
\newtheorem{remark}{Remark}[section]
\newtheorem{claim}{Claim}[section]
\theoremstyle{plain}
\newtheorem{maintheorem}{Theorem}
\newcommand{\nt}{\ensuremath{\mathbb{N}}}
\newcommand{\R}{\ensuremath{\mathbb{R}}}
\newcommand{\Q}{\ensuremath{\mathbb{Q}}}
\newcommand{\Z}{\ensuremath{\mathbb{Z}}}
\newcommand{\crit}{\operatorname{Crit}}
\title[$C^1$ robust rigidity for bi-critical circle maps]{$C^1$ robust rigidity for bi-critical circle maps}
\author{Gabriela Estevez}
\address{Instituto de Matem\'atica e Estat\'istica, Universidade Federal Fluminense}
\curraddr{Rua Prof. Marcos Waldemar de Freitas Reis, S/N, 24.210-201, Bloco G, Niter\'oi, Brasil}
\email{gestevez@id.uff.br}
\subjclass[2010]{Primary 37E10; Secondary 37E20}
\keywords{Rigidity; Renormalization, rotation number, Bi-critical circle maps; Non-flat critical points}
\thanks{The author was partially financed by FAPERJ and CNPQ (Chamada Universal 10/2023 faixa B $404943/2023-3$)}
\begin{document}

	\newcounter{casenum}
	\newenvironment{caseof}{\setcounter{casenum}{1}}{\vskip.5\baselineskip}
	\newcommand{\case}[2]{\vskip.5\baselineskip\par\noindent {\bfseries Case \arabic{casenum}:} #1\\#2\addtocounter{casenum}{1}}
	
	\begin{abstract} We prove that  two topologically conjugate  bi-critical circle maps whose signatures are the same, and whose renormalizations converge together exponentially fast in the $C^2$-topology, are $C^1$ conjugate. 
	\end{abstract}
	
	\maketitle

	\section{Introduction}\label{sec:introduction}
	In one-dimensional dynamics rigidity occurs when the existence of a weak equivalence between two systems implies the existence of a stronger equivalence between the systems. The general rigidity problem that we are interested in is: when does the existence of a topological conjugacy between two circle maps imply the existence of a smooth conjugacy  between these maps?
	
	The study of smoothness of conjugacies between circle maps has been studied at least since the 1960's.
	Smooth circle diffeomorphisms are topologically conjugate to a rigid rotation, and the smoothness of this conjugacy is now well understood. 
	 Herman \cite{H79} and Yoccoz \cite{yoccoz2} proved that if $f$ is a $C^r$ diffeomorphism of the circle,
	whose rotation number $\rho$ satisfies $	 |\rho-p/q | \geq C \, q^{-2-\beta}$ for all $p/q \in \mathbb{Q}$, for some $C > 0$ and $0 \leq \beta<1$, then, provided $r>2\beta+1$, $f$ is $C^{r-1-\beta-\varepsilon}$-conjugate to the corresponding rigid rotation for every $\varepsilon >0$. These results were also proved by Stark \cite{St} and Khanin-Sinai \cite{KS} using  renormalization methods. Later, Khanin and Teplinsky \cite{KT09} showed that the result is also valid for $\varepsilon=0$, provided that $2<r<3$. 
	
	Rigidity results have also been shown for classes of circle maps with critical points, in particular for multi-critical circle maps.
	Multi-critical circle maps are $C^3$ maps of the circle with a finite number of non-flat critical points. Any non-flat critical point has an associated real number which is called the criticality of the critical point.  Yoccoz proved in \cite{yoccoz} that any multi-critical circle map is topologically conjugate to the corresponding rigid rotation, in particular  two multi-critical circle maps  with the same irrational rotation number are topologically conjugate to each other. 
	We are interested in studying the smoothness of the conjugacy between two critical circle maps as opposed to the conjugacy between a critical circle map and the corresponding rigid rotation. 
	If each of the maps only have one critical point and the criticalities of the two critical points are the same then the following smoothness results have been shown for the conjugacy $h$ that sends one critical point to the other: $h$ is quasi-symmetric (and therefore Hölder) \cite{dFdM}; $h$ is a $C^1$ diffeomorphism \cite{KT, GMdM} and it is a $C^{1+\alpha}$ diffeomorphism for a full Lebesgue measure subset $\mathcal{A}$ of irrational rotation numbers \cite{dFdM}, provided that the critical maps are $C^4$. Moreover, it has been shown \cite{Av,dFdM} that there exist real-analytic and $C^{\infty}$ uni-critical circle maps whose rotation numbers are not contained in $\mathcal{A}$ where the conjugacy is only $C^1$ and not $C^{1 +\alpha}$. 
	
	If the two maps have more that one critical point
	additional topological invariants appear. Let $f$ be a $C^3$ multi-critical circle map with irrational rotation number $\rho$, let $\mu$ be the unique $f-$invariant probability measure (since $f$ is topologically conjugate to an irrational rotation then $f$ is uniquely ergodic), and let  $c_0, \dots, c_{N-1}$ be the non-flat critical points with criticalities  $d_0, \dots, d_{N-1}$ respectively (see Section \ref{sec:preliminares}). The  \emph{signature} of the multi-critical circle map $f$ is defined to be the $(2N+2)$-tuple
	\[
	(\rho\,;N;\,d_0,d_1,\ldots,d_{N-1};\,\delta_0,\delta_1,\ldots,\delta_{N-1}),
	\]
	where  $\delta_i=\mu[c_i,c_{i+1})$ (with  $c_{N}=c_0$).
	
	It is a straight forward to see that if $f$ and $g$ are two $C^3$ multi-critical circle maps with the same signature then there is a conjugacy $h$ between them  that identifies each critical point of $f$ with one of $g$ and preserves the  criticalities. 
	In this paper, we study bi-critical circle maps, these are multi-critical circle maps that have precisely two non-flat critical points. We show  the existence of a $C^{1}$ conjugacy between bi-critical circle maps (see Theorem \ref{maintheorem} below) without any restriction on the irrational rotation number. Our main result is the following:	
	\begin{maintheorem}\label{maintheorem} Let $f$ and $g$ be two $C^3$ bi-critical circle maps with the same signature. If the renormalizations of $f$ and $g$ around corresponding critical points converge together exponentially fast in the $C^2$ topology, then $f$ and $g$ are $C^1$-conjugated.
	\end{maintheorem}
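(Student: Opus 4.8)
The plan is to adapt the renormalization approach of Khanin and Teplinsky \cite{KT09, KT} from the uni-critical case to the bi-critical setting, relying on the real a priori bounds for multi-critical circle maps recalled in Section~\ref{sec:preliminares}. Because $f$ and $g$ have the same signature there is a homeomorphism $h$ with $h\circ f = g\circ h$ carrying each critical point of $f$ to the corresponding critical point of $g$, preserving criticalities and the weights $\delta_i$; consequently $h$ maps the $n$-th dynamical partition $\mathcal{P}_n(f)$ onto $\mathcal{P}_n(g)$ atom by atom. For $\Delta\in\mathcal{P}_n(f)$ write $\Delta^g=h(\Delta)$, and for $x$ in the circle let $\Delta_n(x)\in\mathcal{P}_n(f)$ be an atom containing $x$, chosen so that $\Delta_n(x)\subseteq\Delta_{n-1}(x)$. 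It suffices to show that
\[
\phi(x)\;:=\;\lim_{n\to\infty}\,\log\frac{|\Delta_n(x)^g|}{|\Delta_n(x)|}
\]
exists uniformly in $x$ with continuous limit: by the now-standard argument (using the real bounds, so that $|x-y|$ is comparable to the length of a uniformly bounded union of atoms of an appropriate level containing both $x$ and $y$) this forces $h$ to be differentiable with $h'(x)=e^{\phi(x)}$, and since $\phi$ is continuous and positive on the compact circle, $h$ is a $C^1$ diffeomorphism.

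Telescoping the logarithm,
\[
\log\frac{|\Delta_n(x)^g|}{|\Delta_n(x)|}
\;=\;\log\frac{|\Delta_0(x)^g|}{|\Delta_0(x)|}
\;+\;\sum_{k=1}^{n}\log\frac{\,|\Delta_k(x)^g|\,/\,|\Delta_{k-1}(x)^g|\,}{\,|\Delta_k(x)|\,/\,|\Delta_{k-1}(x)|\,},
\]
so the existence and uniformity of the limit, with an exponential rate $\bigl|\phi(x)-\log(|\Delta_n(x)^g|/|\Delta_n(x)|)\bigr|\le C\lambda^{n}$, will follow once I prove the per-level bound
\[
\sup_{x}\;\left|\log\frac{\,|\Delta_k(x)^g|\,/\,|\Delta_{k-1}(x)^g|\,}{\,|\Delta_k(x)|\,/\,|\Delta_{k-1}(x)|\,}\right|\;\le\;C\,\lambda^{k}.
\]
The heart of the matter is that the subdivision of a level-$(k-1)$ atom into its level-$k$ children is governed, after the rescaling defining renormalization, by the $k$-th renormalization of $f$ (resp.\ $g$) around whichever critical point is relevant: for atoms sitting next to $c_0$ or $c_1$ the child-length ratios are literally the corresponding ratios inside the standard domain of $R^{k-1}f$ (resp.\ $R^{k-1}g$), while for atoms lying away from the critical points these ratios are, with uniformly bounded distortion (via cross-ratio/Koebe estimates for the diffeomorphic branches of the return maps), comparable to such renormalized ratios. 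Since by hypothesis the $C^2$-distance between $R^{k-1}f$ and $R^{k-1}g$ is at most $C'\lambda^{k-1}$ and, by the real bounds, every interval length occurring in these rescaled ratios is bounded away from $0$ uniformly in $k$, $C^0$-closeness of the renormalized maps transfers to $C^0$-closeness of the ratios at the same exponential rate, giving the displayed inequality. Continuity of $\phi$ then follows from the uniform convergence once one observes, via bounded geometry together with the same ratio estimate applied to neighboring chains, that two adjacent atoms of $\mathcal{P}_n(f)$ have ratios $|\Delta^g|/|\Delta|$ differing by $O(\lambda^n)$.

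The step I expect to be the main obstacle is the control of child-ratios for atoms adjacent to, or containing, a critical point. There the relevant branch of $f^{q_k}$ behaves like $x\mapsto x^{d_i}$ up to bounded-distortion diffeomorphisms, so linear distortion control is unavailable, and one must express the child-ratios honestly through the renormalized pair and then show that the $C^2$-distance between $R^{k-1}f$ and $R^{k-1}g$ controls the distance between these ratios \emph{uniformly in} $k$. This is exactly where equality of signatures is used beyond fixing the combinatorics: non-flatness and the matching of the criticalities $d_0,d_1$ are what keep the nonlinearity of the renormalized maps near their critical points under control along the whole tower, so that the estimate propagates. Establishing this uniformity — together with the companion fact that the rescaled ratios stay inside a fixed compact subinterval of $(0,\infty)$ — is the crux; everything else is bookkeeping built on the a priori bounds recalled in Section~\ref{sec:preliminares}.
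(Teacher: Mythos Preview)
Your proposal has a genuine gap at the core step. You claim that ``every interval length occurring in these rescaled ratios is bounded away from $0$ uniformly in $k$'', and that therefore $C^2$-closeness of the renormalizations transfers directly to closeness of child-to-parent ratios at rate $\lambda^k$. This is false precisely when the rotation number is of unbounded type. If $a_{k+1}$ is large, the atom $I_k(c_0)$ breaks into roughly $a_{k+1}$ children of $\mathcal{P}_{k+1}$, and in renormalized coordinates these children have lengths of order $1/a_{k+1}$, not bounded below. Worse, each such child is obtained by iterating $\mathcal{R}^k f$ up to $a_{k+1}$ times through a region where the map is nearly tangent to the identity; an error of size $\lambda^k$ between $\mathcal{R}^k f$ and $\mathcal{R}^k g$ can, a priori, be amplified by a factor comparable to $a_{k+1}$ under this many iterates. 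Controlling this requires the almost-parabolic (funnel/tunnel) analysis of Section~\ref{subsec:unbounded}: one changes to tubular coordinates around the near-tangency point, uses the explicit asymptotics of Lemmas~\ref{lemaKTfunnel} and~\ref{lemaKTtunnel}, and exploits $C^2$-closeness to match the second derivatives at the centers (inequality~\eqref{inq:estimativacentrofunnel}). None of this is bookkeeping; it is the reason the $C^2$ topology is needed in the hypothesis.

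There is a second, more structural, gap: the classical dynamical partition $\mathcal{P}_n$ is not the right combinatorial object in the bi-critical case. The first return to $J_n(c_0)$ has a second critical point $\mathfrak{c}_n$ (the preimage of $c_1$), and an endpoint of $\mathcal{P}_{n+1}$ may land arbitrarily close to $\mathfrak{c}_n$. When you later compare $\log(|\Delta^g|/|\Delta|)$ for a level-$(n+1)$ child $\Delta$ against its level-$n$ parent via the Mean Value Theorem, the intermediate point can straddle $\mathfrak{c}_n$, where $D\mathcal{R}^n f$ vanishes, and the distortion estimate blows up. The paper resolves this by replacing $\mathcal{P}_n$ with the two-bridges partition $\widehat{\mathcal{P}}_n$ of Section~\ref{sec:nuevaparticion}, whose endpoints include $\mathfrak{c}_n$ and are arranged so that this obstruction never occurs (see the footnote in the proof of Lemma~\ref{lema_pasocuatro}, and the remark at the end of Section~\ref{sec:estimatesintervals} that Lemma~\ref{lema_pasocuatro} fails for $\mathcal{P}_n$). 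Your outline, which works entirely with $\mathcal{P}_n$ and relies on Koebe away from the critical points, does not see this difficulty and would break at exactly this step.
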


    In \cite{EG}, it was shown that two $C^3$ bi-critical circle maps $f,g$ are $C^{1+\alpha}$-conjugate if their rotation number belongs to a total Lebesgue measure set $\mathcal{A}$, provided that their renormalizations converge together in the $C^1$ topology (instead of the $C^2$ topology required in Theorem \ref{maintheorem}). In particular those bi-critical circle maps are $C^1$-conjugate. The condition on the rotation number belonging to $\mathcal{A}$ let the authors avoid the use of the $C^2$ topology. In this paper we are not imposing any condition on the irrational rotation number, due to this we have to deal with the existence of almost tangency points as well as their first two derivatives under renormalization (see Section 5). Assuming that the renormalizations of $f$ and $g$ converge together exponentially in the $C^2$ topology allows us to obtain a suitable control on the first two derivatives of tangency points of $f$ and $g$.

    

	\section{Preliminaries}\label{sec:preliminares}
	
	\subsection{Multicritical circle maps}\label{subseccomb} A \emph{multicritical circle map} is an orientation preserving $C^3$ circle homeomorphism having $N \geq 1$ non-flat critical points and with irrational rotation number. A critical point $c$ of a $C^3$ map $f$ is \emph{non-flat} of criticality $d>1$ if there exists a neighborhood $W$ of the critical point such that $f(x)=f(c)+\phi(x)\,\big|\phi(x)\big|^{d-1}$ for all $x \in W$, where $\phi : W \rightarrow \phi(W)$ is an orientation preserving $C^{3}$ diffeomorphism satisfying $\phi(c)=0$.
	As we mentioned in the introduction, any multicritical circle map $f$ has no wandering intervals and in particular it is topologically conjugate with the corresponding rigid rotation \cite{yoccoz}.
	
 As an example, we consider a specific case of the generalized Arnold's family: Let $a \in [0,1)$, $N\in\nt$ and let  $\tilde{f}_a:\R\to\R$ be given by:$$\tilde{f}_a(x)=x+a-\frac{1}{2N \pi}\,\sin (2 N \pi x).$$
	
	Each $\tilde{f}_a$ is a lift of $f_a$, an orientation preserving real-analytic circle homeomorphism, under the  universal cover $x \mapsto e^{2\pi ix}$. We can see that each $f_a$ is a critical circle map with  $N$ cubic critical points, given by $\big\{e^{\frac{j}{N}2\pi i}:\, 0 \leq j \leq N-1 \big\}$.
	
	
	
	\subsection{Combinatorics of multicritical circle maps}
	For an irrational rotation number $\rho \in [0,1)$, let us consider its infinite continued fraction expansion:
\begin{equation*}
	\rho= [a_{0} , a_{1} , \cdots ]=   
	\cfrac{1}{a_{0}+\cfrac{1}{a_{1}+\cfrac{1}{ \ddots} }} \ .
\end{equation*}


Truncating the continued fraction expansion, we obtain the  \textit{convergents} of $\rho$, given by $\frac{p_n}{q_n}=\;[a_0,a_1, \cdots ,a_{n-1}].$
The sequence of denominators $\{q_n\}_{n\in\nt}$ satisfies the following recursive formula:
\begin{equation*}
	q_{0}=1, \hspace{0.4cm} q_{1}=a_{0}, \hspace{0.4cm} q_{n+1}=a_{n}\,q_{n}+q_{n-1} \hspace{0.3cm} \text{for all $n \geq 1$} .
\end{equation*}

Given  a preserving orientation circle homeomorphism $f$ with irrational rotation number $\rho$ and given any $x \in S^1$, the sequence of iterates $\{f^{q_n}(x)\}_{n \in\nt}$ converges to $x$ (but never coincides with $x$ since $f$ has no periodic points) alternating the sides where it approaches. 
Moreover, the iterate $f^{q_n+q_{n+1}}(x)$ is closer to $x$ than $f^{q_n}(x)$ but further to $x$ than $f^{q_{n+1}}(x)$. By orientation, $f^{q_n+q_{n+1}}(x)$ must be inside the smallest arc that connects $x$ with $f^{q_n}(x)$. We denote by $I_{n}(x)$ the interval with endpoints $x$ and $f^{q_n}(x)$, which contains the point $f^{q_{n+2}}(x)$. The collection of intervals
\[
\mathcal{P}_n(x) \ = \ \left\{ f^{i}(I_n(x)):\;0\leq i\leq q_{n+1}-1 \right\} \;\bigcup\; 
\left\{ f^{j}(I_{n+1}(x)):\;0\leq j\leq q_{n}-1 \right\} 
\]
is a partition of the circle (modulo endpoints) called the {\it $n$-th (classical) dynamical partition\/} associated to $x$. The sequence of dynamical partitions $\{\mathcal{P}_n(x)\}_{n \in \nt}$, for any $x \in S^1$, is \textit{nested}: each element of $\mathcal{P}_{n+1}(x)$ is contained in an element of 	$\mathcal{P}_n(x)$. Those partitions are also \textit{refining}: the maximal length of an element of 	$\mathcal{P}_n(x)$ converges to zero. For any $x \in S^1$, we denote by $J_n(x)$ the union $I_n(x)\cup I_{n+1}(x)$ and  for any $n \in \nt$, we denote by $E_{n}$ the set of endpoints of $\mathcal{P}_n(c)$ for $c \in Crit(f)$. With dynamical partitions at hand, we are able to define  the renormalization of a critical circle map.

	\subsection{Renormalization of bi-critical circle maps}\label{rmcm}
	We recall the notion of  \textit{bi-critical commuting pairs}, which is a natural generalization of the  classical notion of critical commuting pairs.
	
	\begin{definition}\label{multcommpairs} A $C^3$ critical commuting pair with two 
	critical points, or a bi-critical commuting pair, is a pair $\zeta=(\eta,\xi)$ consisting of two $C^3$ orientation preserving homeomorphisms $\xi:I_\xi \rightarrow  \xi(I_\xi)$ and $\eta : I_\eta \rightarrow \eta(I_{\eta})$
		, satisfying:
		\begin{enumerate}
			\item $I_{\xi}=[\eta(0), 0]$ and $I_{\eta}=[0, \xi(0)]$ are compact intervals in the real line;
			\item $(\eta \circ \xi)(0)=(\xi \circ \eta)(0) \neq 0$;
			\item $D\xi(x)>0$ for all $x$ 
			and $D\eta(x)>0$ for all $x \neq \beta$;
			\item The origin has the same criticality for $\eta$ than for $\xi$;
			\item For each $1 \leq k \leq 3$, we have that $D^{k}_{-}(\xi \circ \eta)(0)=D^{k}_{+}(\eta \circ \xi)(0)$, where $D_{-}^k$ and $D_{+}^k$ represent the $k$-th left and right derivative, respectively.
		\end{enumerate}
	\end{definition}


	Let $\zeta_1=(\eta_{1}, \xi_{1})$ and $\zeta_2=(\eta_2, \xi_2)$ be two $C^r$ bi-critical 
	 commuting pairs, and let
	$\tau_1: [\eta_1(0), \xi_1(0)] \rightarrow [-1,1]$ and $\tau_2: [\eta_2(0), \xi_2(0)] \rightarrow [-1,1]$ be the two M\oe{}bius transformations
	with
	$$ \tau_{i}(\eta_{i}(0))=-1, \ \ \tau_{i}(0)=0 \ \text{ and } \ \tau_{i}(\xi_{i}(0))=1,$$
	for $i \in \{1,2\}$. For all $0 \leq r < \infty$ we define the $C^r$ pseudo-distance between $\zeta_1$ and $\zeta_2$ as:
		\begin{equation*}
			d_{r}(\zeta_1,\zeta_2)= \max \left\lbrace \left| \dfrac{\xi_1(0)}{\eta_1(0)} - \dfrac{\xi_2(0)}{\eta_2(0)} \right| ,  \
			\| \tau_{1}\circ \zeta_1 \circ \tau_1^{-1} - \tau_2 \circ \zeta_2 \circ \tau_{2}^{-1} \|_{r} \right\rbrace
		\end{equation*}
		where $\| \cdot \|_{r}$ represents the  $C^{r}$ norm for maps in the interval $[-1,1]$ with a discontinuity at the origin. To get a distance, we restrict to \emph{normalized} bi-critical 
	commuting pairs: for any given pair $\zeta=(\eta,\xi)$ we denote by $\widetilde{\zeta}$ the pair $(\widetilde{\eta}|_{\widetilde{I_{\eta}}}, \widetilde{\xi}|_{\widetilde{I_{\xi}}})$, where tilde means linear rescaling by the factor $1/|I_{\xi}|$. Note that $|\widetilde{I_{\xi}}|=1$ and $|\widetilde{I_{\eta}}| =|I_{\eta}|/|I_{\xi}|$. Equivalently $\widetilde{\eta}(0)=-1$ and $\widetilde{\xi}(0)=|I_{\eta}|/|I_{\xi}|=\xi(0)/\big|\eta(0)\big|$.
	
    The \emph{period} of the pair $\zeta=(\eta, \xi)$ is the number $\chi(\zeta) \in \nt$ such that $ \eta^{\chi(\zeta)}(\xi(0)) <0\leq \eta^{\chi(\zeta)-1}(\xi(0)),$ when such number exists. If such number does not exist, {\it i.e.\/} when $\eta$ has a fixed point, we write $\chi(\zeta)=\infty$.

	\begin{definition}\label{defren} For a pair  $\zeta=(\eta, \xi)$ with $\chi(\zeta)< \infty$ and  $(\xi \circ \eta)(0) \in I_{\eta}$, we define its \emph{pre-renormalization} as being the pair$$p\mathcal{R}(\zeta) =\left(\eta|_{[0, \eta^{\chi(\zeta)}(\xi(0)) ]} \ , \ \eta^{\chi(\zeta)}\circ \xi|_{I_{\xi}} \right).$$
	Moreover, we define the \emph{renormalization} of $\zeta$ as the  rescaling of $p\mathcal{R}(\zeta)$, that is,
	\begin{align*}
		 \mathcal{R}(\zeta)=  \left(\widetilde{\eta}|_{[0,\widetilde{\eta^{\chi(\zeta)}(\xi(0))} ]} \ , \ \widetilde{\eta^{\chi(\zeta)}\circ \xi}|_{\widetilde{I}_{\xi}}
		\right).
	\end{align*}
	\end{definition}
	
	If $\zeta$ is a multicritical commuting pair with $\chi(\mathcal{R}^{j}\zeta)< \infty$ for $0 \leq j \leq n-1$, we say that $\zeta$ is 	\textit{$n$-times renormalizable}, and if $\chi(\mathcal{R}^{j}\zeta)< \infty$  for all $j \in \nt$, we say that $\zeta$ is \textit{infinitely renormalizable}. In the last case, we define the \textit{rotation number} of the bi-critical commuting pair $\zeta$ as the irrational number whose continued fraction expansion is given by $[\chi(\zeta), \chi(\mathcal{R}\zeta), \cdots, \chi(\mathcal{R}^{n}\zeta), \cdots ].$

	Let us recall how we obtain a bi-critical 
	 commuting pair from a given  bi-critical 
	  circle map. Let $f$ be a $C^r$ bi-critical circle map with $\rho(f)=\rho \in \R\setminus \Q$ and  critical points $c_{0}, c_{1}$. For a given $c_i$, let $\widehat{f}$ be the lift of $f$ (under the universal covering $t \mapsto c_i\cdot\exp(2\pi i t)$) such that $0< \widehat{f}(0)<1$ (note that $D\widehat{f}(0)=0$). For $n\geq 1$, let $\widehat{I}_{n}(c_i)$ be the closed interval in $\R$, that has the origin as one of its endpoints and which projects onto $I_{n}(c_i)$. We define $\xi: \widehat{I}_{n+1}(c_i) \rightarrow \R$ and $\eta: \widehat{I}_{n}(c_i) \rightarrow \R$ by $\xi= T^{-p_{n}}\circ \widehat{f}^{q_{n}}$ and $\eta= T^{-p_{n+1}}\circ \widehat{f}^{q_{n+1}}$, where  $T(x)=x+1$. Then the pair $(\eta|_{\widehat{I}_{n}(c_i)}, \xi|_{\widehat{I}_{n+1}(c_i)})$ is a renormalizable bi-critical commuting pair, and its rescaling is denoted by $\mathcal{R}_i^{n}f=\left(f^{q_{n+1}}|_{I_n(c_i)}, f^{q_n}|_{I_{n+1}(c_i)}\right).$
In other words, let us consider the $n-$th scaling ratio of $f$:$$s_{n,i}^f=\frac{|I_{n+1}(c_i)|}{|I_n(c_i)|},  \ \ \ \ \text{for $i \in \{0,1\}$.}$$ Then the $n-$th renormalization of $f$ at $c_i$ is the commuting pair 
	$\mathcal{R}_i^nf : [-s_{n,i}^f, 1] \to [-s_{n,i}^f, 1]$ given by
	\[ \mathcal{R}_i^nf=
	\begin{dcases}
		B_{n,f}\circ f^{q_n} \circ B_{n,f}^{-1} & \mbox{in  $[-s_{n,i}^f,0)$}\\[1ex]
		B_{n,f}\circ f^{q_{n+1}} \circ B_{n,f}^{-1} & \mbox{in $[0,1]$,}\\
	\end{dcases}
	\]
	where $B_{n,f}$ is the unique orientation-preserving affine diffeomorphism between $J_n(c_i)=I_{n+1}(c_i)\cup I_{n}(c_i)$ and $[-s_{n,i}^f,1]$, that is,
	\[
	B_{n,f}(t)= (-1)^n \frac{|t-c_i|}{|I_n(c_i)|}.
	\]
	
	As a straightforward consequence of the combinatorics, we know that $f$ has irrational rotation number if and only if $f$ is infinitely renormalizable. In fact, once we have the $n-$th renormalization of a pair we can  obtain the  $(n+1)-$th renormalization by iterating $\mathcal{R}_i^{n}f(B_{n,f}(1))$ as many times until we observe a change in the sign of the images. By combinatorics that number coincides with $ \ a_{n+1}+1 $, since$$(\mathcal{R}_i^{n} f)^{a_{n+1}+1}(B_{n,f}(1)) <0 <(\mathcal{R}_i^nf)^{a_{n+1}}(B_{n,f}(1)).$$ The $(n+1)-$th renormalization (whose graph before being normalized is represented by the pair of maps inside the small blue box in Figure \ref{fig:sequencerenor}) is$$\left(B_{n+1,f}\circ f^{q_{n+2}} \circ B_{n+1,f}^{-1} |_{[-1,0]}, \ B_{n+1,f}\circ f^{q_{n+1}} \circ B_{n+1,f}^{-1} |_{[0.s_{n+1,i}^f]}\right).$$

	\begin{figure}
		\includegraphics[width=3.6in]{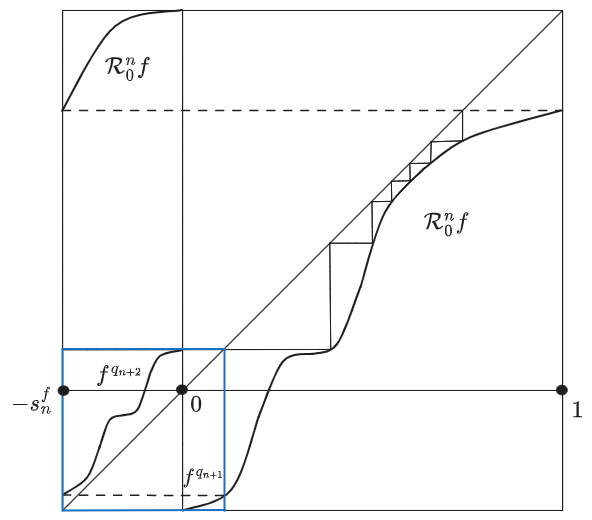}
		\caption{\label{fig:sequencerenor} The $n-$th  renormalization (in black) and the $(n+1)$-th pre-renormalization (in blue) of a bi-critical circle map $f$ around the critical point $c_0$.}
	\end{figure}
	

    Let $f$ and $g$ be two critical circle maps  with same signature. We say that their corresponding renormalizations converge together exponentially fast, around the critical point $c_i$, in the $C^r$-topology if $f$ and $g$ are infinitely renormalizable and there exist $n_0 \in \nt$, $C>0$, $\lambda \in (0,1)$ (both depending on the signature) such that for all $n \geq n_0$
    \begin{equation} \label{ineq:convergencerenormalizations}
    d_r(\mathcal{R}_i^nf, \mathcal{R}_i^ng) \leq C\, \lambda^n.
    \end{equation}
    In particular, inequality \eqref{ineq:convergencerenormalizations} implies that  for all $n \geq n_0$
    \begin{equation*} 
    \left| \frac{|I_{n}(c_i)|}{|I_{n+1}(c_i)|} - \frac{|h(I_{n}(c_i))|}{|h(I_{n+1}(c_i))|}  \right| \leq C\, \lambda^n.    
    \end{equation*}

		\subsection{Real bounds}
	Now, let us mention  a geometric control on the dynamical partitions that implies the pre-compactness of the set $\{\mathcal{R}^nf\}_{n\in \nt}$ in the $C^0$-topology. That geometric control is called real bounds (see  \cite{H88,Swi} and \cite{EdF,EdFG}).
	
	\begin{theorem}[Universal Real Bounds]\label{teobeau} Given $N \in \nt$ and $d \in \R$, with $d>1$, let $\mathcal{F}_{d}$ be the family of $C^3$ bi-critical circle maps 
		whose maximum criticality is bounded by $d$. There exists $C_0=C_0(N,d)>1$ such that for any $f \in \mathcal{F}_{d}$ and $c \in \crit(f)$ there exists $n_0 \in \mathbb{N}$ with the property that: for all $n\geq n_0$ and every adjacent intervals $I,J \in \mathcal{P}_{n}(c)$, we have$$ \frac{1}{C_0} \leq \frac{|I|}{|J|} \leq C_0\,.$$
	\end{theorem}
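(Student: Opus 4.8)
The plan is to prove this as a ``beau bounds'' statement (bounded geometry that is eventually universal), combining three ingredients: absence of wandering intervals, cross-ratio distortion control for $C^3$ maps with non-flat critical points, and an inductive scheme on the depth $n$ together with an almost-parabolic (Yoccoz-type) estimate near the critical points. First I would invoke Yoccoz \cite{yoccoz}: every $f\in\mathcal F_d$ is topologically conjugate to the rigid rotation $R_{\rho(f)}$, so the dynamical partitions $\mathcal P_n(c)$ are nested and refining, $\max_{I\in\mathcal P_n(c)}|I|\to0$ as $n\to\infty$. Since $\mathcal P_n(c)=\{f^i(I_n(c)):0\le i<q_{n+1}\}\cup\{f^j(I_{n+1}(c)):0\le j<q_n\}$ is generated by the orbits of the two fundamental intervals $I_n(c)$ and $I_{n+1}(c)$, any two adjacent intervals $I,J\in\mathcal P_n(c)$ are the image under some iterate $f^k$, $k\le q_n+q_{n+1}$, of one of boundedly many ``generating configurations'' sitting near a critical point $c'\in\crit(f)$ --- built out of $I_n(c'),I_{n+1}(c'),I_{n+2}(c')$ and a few of their first iterates. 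So it suffices to control the distortion of such iterates and to bound the ratios inside the generating configurations, i.e.\ the scaling ratios $s_{n,i}^f$ and the two branch lengths of the commuting pair $\mathcal R_i^n f$.

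For the distortion I would use the cross-ratio machinery for $C^3$ maps with non-flat critical points, in two forms. Away from the critical set: if $f^k$ is realized on a chain $T_0\to\cdots\to T_k$ with $T_{j+1}=f(T_j)$, each $T_j$ free of critical points in its interior and the chain of multiplicity at most $\kappa$ (every point of $S^1$ belonging to at most $\kappa$ of the $T_j$), then the cross-ratio distortion of $f^k$ is at most $\exp\!\big(C_1\kappa\sum_j|T_j|\big)$, with $C_1$ depending only on the $C^3$-data; since $\sum_j|T_j|\le\kappa$ this is a priori bounded, and --- crucially --- when the $T_j$ live at depth comparable to $n$ the sum $\sum_j|T_j|$ tends to $0$, so the distortion tends to $1$. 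Across a critical point $c'$, writing $f(x)=f(c')+\phi(x)\,|\phi(x)|^{d'-1}$ with $\phi$ an orientation preserving $C^3$ diffeomorphism and $d'\le d$, the cross-ratio is distorted by a factor bounded in terms of $d$ alone (the diffeomorphic factor $\phi$ being handled by the first estimate). The Koebe principle is needed in the same macroscopic form, with constants depending on $\kappa$, $d$, and, at finite depth, on $\|f\|_{C^3}$.

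To bound the fundamental configurations I would run a bootstrap. Assume adjacent intervals of $\mathcal P_n(c)$ have ratio at most $M_n$. If some $s_{n,i}^f$ were very small, pulling $I_n(c_i)\cup I_{n+1}(c_i)$ forward along its return orbit (a chain of length $\approx q_n$ and multiplicity $\le\kappa$) with the nearly undistorted cross-ratio estimate would produce images violating either the refining property or the commuting relation of Definition \ref{multcommpairs}; quantifying this gives $s_{n,i}^f\ge\varepsilon_0$. The delicate case is when the period $\chi(\mathcal R_i^n f)$ is large: then the return map $\eta=f^{q_{n+1}}$ restricted near $c_i$ behaves like an \emph{almost-parabolic} map, and the intervals $\eta^\ell(I_{n+1}(c_i))$, $0\le\ell\le\chi(\mathcal R_i^n f)$, first shrink and then grow while sweeping across an interval of the coarser dynamical partition. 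Here a Yoccoz-type lemma, in which the criticality of $c_i$ (bounded in terms of $N$ and $d$) governs the parabolic rate, shows that consecutive such intervals are comparable with a constant depending only on $N$ and $d$ and that the extreme ones are comparable to the middle one; this closes the comparison of $|I_n(c_i)|$ with $|I_{n+1}(c_i)|$ and of the two branch lengths.

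Assembling these steps first yields an \emph{a priori} bound $M_n\le K(f)$ for $n$ large; to reach universality, note that every comparison above uses chains whose intervals live at depth comparable to $n$, hence of total length $o(1)$, so the cross-ratio and Koebe distortions along them tend to $1$ as $n\to\infty$. Re-running the estimates in this limit shows $\limsup_n M_n\le C_0(N,d)$, where $C_0$ depends only on $N$ (through $\kappa$) and on $d$ (through the cross-ratio and Yoccoz constants); hence there is $n_0=n_0(f)$ with $M_n\le C_0(N,d)$ for all $n\ge n_0$, which is the assertion. I expect the main obstacles to be twofold: verifying that the pullback chains of adjacent-interval configurations have multiplicity bounded by a constant depending only on $N$ --- with several critical orbits, the combinatorial bookkeeping of overlaps is the technical crux --- and the almost-parabolic estimate at a critical point of criticality in $(1,d]$, where the non-diffeomorphic part must be controlled uniformly so that the final constant $C_0$ is genuinely universal.
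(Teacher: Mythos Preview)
The paper does not give its own proof of Theorem~\ref{teobeau}: it is stated as a black box with references to \cite{H88,Swi} and, for the multicritical case, to \cite{EdF,EdFG}. So there is nothing in the paper to compare your argument against line by line.

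That said, your outline is essentially the strategy carried out in the cited references \cite{EdF,EdFG}: reduce adjacent-interval comparability to a finite list of generating configurations near the critical points, transport these via chains of bounded multiplicity using cross-ratio distortion and Koebe, handle the passage through a critical point via the power-law model $x\mapsto x|x|^{d'-1}$, and control the long-period case with a Yoccoz almost-parabolic lemma; then upgrade the $f$-dependent a priori bound to a universal (``beau'') one by observing that at deep levels the total length of the chains is $o(1)$ so the distortion constants tend to~$1$. Your identification of the two genuine difficulties --- bookkeeping the chain multiplicity when several critical orbits are present, and making the Yoccoz lemma uniform in the criticality $d'\in(1,d]$ --- matches exactly where the work lies in \cite{EdF,EdFG}. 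One small correction: the dependence of $C_0$ on $N$ here is only through $N=2$ (the map is bi-critical), and the relevant combinatorial bound on the multiplicity in fact depends only on the number of critical points, not on an abstract $\kappa$; in \cite{EdFG} this is made explicit and is what allows the constant to be universal in $\mathcal F_d$.
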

 
    In other words, real bounds say that the lengths of adjacent intervals in the dynamical partition are comparable. Real bounds are also true even if the circle map has not only two but a finite number of critical points (see  \cite{EdF,EdFG}). \\
    
    The following result is a consequence of having universal real bounds.
	
	\begin{corollary}\label{corbeau}(Exponential refining) There exists a constant $\mu=\mu(C_0) \in (0,1)$, universal in $\mathcal{F}_{N,d}$, such that for all $m>n\geq n_0$: if $\mathcal{P}_{m}(c) \ni I \subseteq J \in \mathcal{P}_{n}(c)$, then $|I| \leq \mu^{m-n} |J|$. 
	\end{corollary}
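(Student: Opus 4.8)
The plan is to extract a uniform one-step--type contraction from the Universal Real Bounds (Theorem~\ref{teobeau}) and then iterate it along the nested, refining sequence of dynamical partitions. Fix $C_0=C_0(N,d)>1$ as in Theorem~\ref{teobeau}, enlarge $n_0$ so that its conclusion holds from level $n_0$ on, and put $\theta:=\dfrac{C_0}{C_0+1}\in(0,1)$.

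The elementary observation is this: if an interval $J\in\mathcal{P}_n(c)$ is the union of $k\ge 2$ elements of a finer partition $\mathcal{P}_{m}(c)$, then any one of these pieces, say $I$, is adjacent inside $J$ to another element $I'$ of $\mathcal{P}_m(c)$, and then Theorem~\ref{teobeau} gives $|I|\le C_0|I'|$, so that $|I|\le\frac{C_0}{C_0+1}\bigl(|I|+|I'|\bigr)\le\theta\,|J|$. Thus everything reduces to showing that each element of $\mathcal{P}_n(c)$ is \emph{genuinely} subdivided after a bounded number of renormalization steps. I would establish this from the combinatorial description of the dynamical partitions and the recursion $q_{n+2}=a_{n+1}q_{n+1}+q_n$: an interval of the ``long'' family $f^{i}(I_n(c))$, $0\le i\le q_{n+1}-1$, breaks into $a_{n+1}+1\ge 2$ elements of $\mathcal{P}_{n+1}(c)$, whereas an interval of the ``short'' family $f^{j}(I_{n+1}(c))$, $0\le j\le q_n-1$, is itself an element of $\mathcal{P}_{n+1}(c)$ but breaks into $a_{n+2}+1\ge 2$ elements of $\mathcal{P}_{n+2}(c)$. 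Hence every $J\in\mathcal{P}_n(c)$ is properly subdivided no later than at level $n+2$, so the displayed estimate applies with $m=n+2$, giving $|I|\le\theta\,|J|$ whenever $\mathcal{P}_{n+2}(c)\ni I\subseteq J\in\mathcal{P}_n(c)$ with $n\ge n_0$.

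Finally I would iterate. Given $\mathcal{P}_m(c)\ni I\subseteq J\in\mathcal{P}_n(c)$ with $m>n\ge n_0$, use the nesting of the partitions to pass to ancestors $I=I_m\subseteq I_{m-2}\subseteq I_{m-4}\subseteq\cdots$ and apply the two-step contraction at each stage, stopping at level $n$ or $n+1$ (whichever is reached) and using the trivial monotonicity $|I_{n+1}|\le|J|$ in the second case; this yields $|I|\le\theta^{\lfloor(m-n)/2\rfloor}|J|$. Since $\lfloor(m-n)/2\rfloor\ge(m-n)/4$ for $m-n\ge 2$, setting $\mu:=\theta^{1/4}=\bigl(\tfrac{C_0}{C_0+1}\bigr)^{1/4}\in(0,1)$ gives $|I|\le\mu^{m-n}|J|$, with $\mu$ depending only on $C_0$ and hence universal in $\mathcal{F}_{N,d}$ (for $m=n+1$ an interval may persist unchanged, so one only has the trivial $|I|\le|J|$, which is the content of the statement in that degenerate case).

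The delicate point is the combinatorial bookkeeping in the middle step --- pinning down exactly how each element of $\mathcal{P}_n(c)$ decomposes inside $\mathcal{P}_{n+1}(c)$ and $\mathcal{P}_{n+2}(c)$, in particular that the short-family intervals survive for precisely one level before splitting --- and making sure that the contraction factor inherits its universality solely from $C_0$, with no hidden dependence on $n$, on the map, or on the rotation number.
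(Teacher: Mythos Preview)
The paper does not actually supply a proof of Corollary~\ref{corbeau}; it is stated as an immediate consequence of the Universal Real Bounds (Theorem~\ref{teobeau}) and left to the reader. Your argument is exactly the standard way one fills this in: extract a one-step contraction $\theta=C_0/(C_0+1)$ from comparability of adjacent atoms, verify from the combinatorial decomposition $I_n=I_{n+2}\cup\bigcup_{k=0}^{a_{n+1}-1}f^{q_n+kq_{n+1}}(I_{n+1})$ that every atom of $\mathcal{P}_n(c)$ is genuinely subdivided no later than level $n+2$, and iterate. The combinatorial bookkeeping you do is correct, and the universality of $\mu$ follows since $\theta$ depends only on $C_0$.

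One small point: your parenthetical remark about $m=n+1$ is not quite a reading of the statement as written. The corollary claims $|I|\le\mu^{m-n}|J|$ with $\mu<1$, and for $m=n+1$ a short-family interval $f^j(I_{n+1}(c))$ literally persists, giving $|I|=|J|$; so the inequality as stated fails in that edge case. This is a well-known (and harmless) defect of the formulation rather than of your argument --- the corollary is really a two-step contraction, and everywhere the paper uses it (e.g.\ in Section~\ref{sec:proofmaintheorem}) the relevant gap $m-n$ is of order $b_2 n$, so the issue never arises. You correctly identify the phenomenon; just be aware that ``the trivial $|I|\le|J|$'' is \emph{not} what the displayed inequality asserts for $m-n=1$.
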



\subsection{Two tools: Schwarzian derivative and Koebe Distortion Principle}

Recall that for a given $C^3$ map $f$ we can define its Schwarzian derivative, which is the differential operator defined (for all $x$ regular point of $f$) by:
\[
S f (x) = \frac{D^3 f (x)}{Df (x)} - \frac{3}{2} \left(
\frac{D^2 f (x)}{Df (x)} \right)^2.
\]

For multicritical circle maps we have the following result concerning the Schwarzian derivative:

\begin{lemma}\cite[Lemma 4.1]{EdFG} \label{lemma_schwarzianderivative} Let $f$ be a multicritical circle map and $c \in \crit(f)$. There exists $n_1= n_1(f ) \in \nt$ such that for all $n \geq n_1$ we have that:
	\[
	S f^j (x) < 0 \text{ \ for all $j \in \{1, \dots , q_{n+1}\}$ and for all $x \in I_n(c)$ regular point of $f^j$,}
	\]
	and
	\[
	S f^j (x) < 0 \text{ \ for all $j \in \{1, \dots, q_n\}$ and for all $x \in I_{n+1}(c)$ regular point of $f^j$.}
	\]
\end{lemma}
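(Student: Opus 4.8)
The statement is Lemma \ref{lemma_schwarzianderivative}, a uniform negative-Schwarzian property for the high iterates of a multicritical circle map along the two combinatorially relevant sequences of intervals. The plan is to exploit two classical facts about the Schwarzian derivative: first, that it is negative at a non-flat critical point (and, more importantly, in a fixed punctured neighborhood of each such point), and second, that it satisfies the cocycle-type chain rule $S(f\circ g)(x) = Sf(g(x))\cdot (Dg(x))^2 + Sg(x)$, so that a composition has negative Schwarzian at a point as soon as \emph{every} factor does, evaluated along the orbit. Thus it suffices to find $n_1$ so large that, for $n\geq n_1$, the first $q_{n+1}$ iterates of any point of $I_n(c)$ (respectively the first $q_n$ iterates of any point of $I_{n+1}(c)$) land, at each step, inside a region where $Sf<0$.

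The first step is to fix, for each of the $N$ critical points $c_0,\dots,c_{N-1}$ of $f$, a neighborhood $W_j$ coming from the non-flatness normalization $f(x)=f(c_j)+\phi_j(x)|\phi_j(x)|^{d_j-1}$. A direct computation with this normal form shows that $Sf<0$ on $W_j\setminus\{c_j\}$ provided $W_j$ is chosen small enough: the dominant term near $c_j$ is that of the pure power $x\mapsto x|x|^{d-1}$, whose Schwarzian is $-\tfrac{1}{2}(d^2-1)/x^2<0$, and the diffeomorphic coordinate change $\phi_j$ only perturbs this by a bounded amount, which is negligible compared to the $1/x^2$ blow-up. Hence there is a fixed open set $V=\bigcup_j W_j$ containing $\crit(f)$ on which $Sf<0$ away from the critical points. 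On the complement $S^1\setminus V$, $f$ is a local diffeomorphism, but $Sf$ need not be negative there; this is exactly where the dynamics must be invoked.

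The second step is the combinatorial core: by the refining property of the dynamical partitions (every element of $\mathcal P_n(c)$ has length tending to $0$ uniformly as $n\to\infty$), choose $n_1$ so large that every atom of $\mathcal P_{n_1}(c_j)$ that is adjacent to, or contains, a critical point lies inside the corresponding $W_j$, and moreover so that the only atoms of $\mathcal P_{n_1}$ meeting $S^1\setminus V$ are ``regular'' atoms on which $f$ (hence $f^j$ for the relevant range of $j$) is a diffeomorphism. Now take any $x\in I_n(c)$ with $n\geq n_1$ and any $j\in\{1,\dots,q_{n+1}\}$. By the standard description of the orbit combinatorics, the points $x, f(x),\dots,f^{j-1}(x)$ visit the atoms of $\mathcal P_n(c)$ (more precisely of a partition refining it) in a controlled way, and each visit to a neighborhood of a critical point is an entry into some $W_j$. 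Writing $f^j = f\circ\cdots\circ f$ and applying the chain rule for $S$, each factor contributes a term $Sf(f^{i}(x))\cdot(Df^{i}(x))^2$; the factors corresponding to iterates that fall in $V\setminus\crit(f)$ are strictly negative, while the factors corresponding to iterates in the regular region contribute terms whose sign is not controlled individually. The trick is therefore not to argue factor-by-factor but to use that $f^j$ itself, restricted to $I_n(c)$, is, up to the finitely many near-critical passages, a composition that one can compare with a map of negative Schwarzian; more cleanly, one uses the known fact (from the real-bounds machinery, e.g.\ the Świątek–Herman cross-ratio estimates cited as \cite{H88,Swi}) that the relevant branches $f^{q_{n+1}}|_{I_n(c)}$ and $f^{q_n}|_{I_{n+1}(c)}$ decompose, for $n$ large, into a single unimodal-type factor near a critical point composed with diffeomorphisms of controlled distortion, and that the negative Schwarzian of the critical factor dominates.

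I expect the genuine obstacle to be precisely this last point: controlling the sign of $Sf^j$ when the orbit segment passes through regions where $Sf$ is positive. The clean resolution, and presumably what \cite[Lemma 4.1]{EdFG} does, is to show that for $n$ large every iterate $f^i(x)$ with $0\le i\le j-1$ in fact lies in the region where $Sf\le 0$ — i.e.\ that the ``bad'' set $\{Sf>0\}$ is, after finitely many renormalization steps, disjoint from all the atoms that the orbit of $I_n(c)$ visits under the first $q_{n+1}$ iterates. This is plausible because the atoms visited shrink and concentrate their geometry near the post-critical set, where the power-law behavior forces $Sf<0$; making it rigorous requires combining the refining property (Corollary \ref{corbeau}) with the explicit non-flat normal form and a compactness argument on $S^1\setminus V$ to rule out positive Schwarzian along the finitely many remaining regular atoms. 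Once that is established, the chain rule gives $Sf^j<0$ termwise, and the two displayed inequalities follow immediately for all $n\ge n_1$.
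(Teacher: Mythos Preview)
The paper does not prove this lemma; it is quoted verbatim from \cite[Lemma~4.1]{EdFG} and used as a black box, so there is no in-paper proof to compare your attempt against.

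On the merits: your proposal contains the right idea and then discards it for a wrong one. The domination mechanism you mention in passing --- that the huge negative Schwarzian contributed by the passage through the critical point outweighs the bounded positive contributions elsewhere --- is essentially how the result is proved in \cite{EdFG}. From the chain rule one has $Sf^j(x)=\sum_{i=0}^{j-1}Sf(f^i(x))\,(Df^i(x))^2$; the $i=0$ term alone gives $Sf(x)\le -C/|I_n(c)|^{2}\to-\infty$, because $x\in I_n(c)$ lies within $|I_n(c)|$ of $c$ and $Sf$ blows up like $-(d^2-1)/(2(x-c)^2)$ there. The work is then to show that the total positive contribution $\sum_{i}(Sf)^+(f^i(x))(Df^i(x))^2$ is $o(|I_n(c)|^{-2})$, which one obtains from real bounds together with the facts that the intervals $f^i(I_n(c))$ have total length at most $1$ and maximal length tending to $0$.

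Your ``clean resolution'' in the final paragraph, however, is false and cannot be repaired. The family $\{f^i(I_n(c))\}_{i=0}^{q_{n+1}-1}$ is exactly the collection of long atoms of $\mathcal P_n(c)$; together with the short atoms it tiles the \emph{entire} circle. These intervals do not ``concentrate their geometry near the post-critical set'' --- they cover every point of $S^1$. Hence whenever $\{Sf>0\}$ is nonempty (and nothing about a $C^3$ multicritical circle map forces $Sf\le 0$ globally), for every $n$ some iterate $f^i(x)$ lands in it, and termwise negativity fails. The argument has to be quantitative, comparing the sizes of the negative and positive contributions, not an avoidance argument; you had this in hand and should not have let it go.
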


It is known that the  Schwarzian derivative of the composition of two (sufficiently differentiable) functions $f$ and $g$ is given by:
\[
S(f\circ g)(z)=\left((Sf)\circ g\right)(z) \cdot (Dg(z))^{2}+Sg(z).
\]
Moreover,  it is also know that Mobius transformations are the only functions with zero Schwarzian derivative. Hence, the Schwarzian derivative of the renormalization is given by:
\[ S(\mathcal{R}_0^nf)=
\begin{dcases}
	\left((Sf^{q_n})\circ B_{n,f}^{-1}\right)\cdot \left(DB_{n,f}^{-1}(z)\right)^2 & \mbox{in  $[-s_{n,0}^f,0)$}\\[1ex]
	\left((Sf^{q_{n+1}}) \circ B_{n,f}^{-1}\right)\cdot \left(DB_{n,f}^{-1}(z)\right)^2 & \mbox{in $[0,1]$.}\\
\end{dcases}
\]
In particular, $S(\mathcal{R}_0^nf)(z)<0$  for all $n\geq n_1$ and for all  $z \in [-s_{n,0}^f,1]$  regular point.

The Koebe Distortion Principle gives a control on the distortion of high iterates of the map. Recall that given two intervals $M \subset T \subset S^1$, with $M\Subset T$, we define the \textit{space} of $M$ inside $T$ as the smallest of the ratios $|L|/|M|$ and $|R|/|M|$, where $L$ and $R$ denote the two connected components of $T \setminus M$. If the space is $\tau> 0$, we say that $T$ contains a \textit{$\tau$-scaled} neighborhood of $M$. For multicritical circle maps we have the following  Koebe Distortion Principle.

\begin{lemma} \label{KoebeDistortionPrinciple}
	For each $\gamma, \tau > 0$ and each multicritical circle map $f$, there exists a constant 
	\[
	K = \left( 1 + \frac{1}{\tau} \right)^2 \exp(\widetilde{C_0} \gamma) \ > 1,
	\]
	where $\widetilde{C_0}>0$  only depends on $f$, with the following property: if $T$ is an interval such
	that $f^k|_T$ is a diffeomorphism onto its image and if
	$\sum_{j=0}^{k-1} |f^j (T)| \leq \gamma$, then for each interval 	$M \subset T$ for which $f^k(T)$ contains a $\tau$-scaled neighborhood of $f^k(M)$ we have for all $x, y \in M$
	\[
	\frac{1}{K} \leq \frac{|Df^k (x)|}{|Df^k (y)|} \leq K.
	\]
\end{lemma}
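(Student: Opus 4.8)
The plan is to adapt the classical proof of the Koebe distortion principle for maps with non-flat critical points (see \cite{EdFG} and the references therein), the only delicate point being the behaviour of the Schwarzian derivative of $f$ near its critical set. For intervals $M\Subset T$, let $L$ and $R$ be the two connected components of $T\setminus M$ and set
\[
\mathbf{Cr}(M,T)\ =\ \frac{|M|\,|T|}{|L|\,|R|}\,.
\]
For a homeomorphism $g$ that is monotone on $T$, write $\mathbf{Cr}(g;M,T)=\mathbf{Cr}(g(M),g(T))/\mathbf{Cr}(M,T)$ for the associated cross-ratio distortion. M\oe{}bius transformations have $\mathbf{Cr}(g;M,T)\equiv 1$, and, expanding $\mathbf{Cr}$, one checks that a $C^3$ diffeomorphism with $Sg\le 0$ on $T$ satisfies $\mathbf{Cr}(g;M,T)\ge 1$.

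The two ingredients I would combine are the following. First, a cross-ratio inequality: there is a constant $\widetilde{C_0}=\widetilde{C_0}(f)>0$ such that, whenever $f^k$ is a diffeomorphism on an interval $T$ and $M\Subset T$,
\[
\mathbf{Cr}\big(f^k(M),f^k(T)\big)\ \ge\ \mathbf{Cr}(M,T)\,\exp\!\Big(-\widetilde{C_0}\sum_{j=0}^{k-1}|f^j(T)|\Big)\ \ge\ \mathbf{Cr}(M,T)\,e^{-\widetilde{C_0}\gamma}.
\]
This follows by telescoping the corresponding estimate for a single iterate of $f$. Near a critical point $c$ of criticality $d$ one uses the non-flat normal form $f(x)=f(c)+\phi(x)\,|\phi(x)|^{d-1}$: the power map $u\mapsto u|u|^{d-1}$ has strictly negative Schwarzian derivative (a direct computation gives $S\big(u|u|^{d-1}\big)=-\tfrac{d^2-1}{2u^2}<0$, since $d>1$), so, by the composition formula $S(f\circ g)=\big((Sf)\circ g\big)(Dg)^2+Sg$ recalled above, $f$ restricted to any interval of regular points decomposes as a composition of maps with non-positive Schwarzian (affine maps and power maps) and $C^3$ diffeomorphisms whose Schwarzian is bounded in absolute value by a constant depending only on $f$; since the intermediate intervals have lengths comparable to those of the corresponding $f^j(T)$, one obtains the displayed bound. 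Second, the Koebe space hypothesis: if $f^k(T)$ contains a $\tau$-scaled neighborhood of $f^k(M)$, then $|f^k(L)|,|f^k(R)|\ge \tau\,|f^k(M)|$, and since $|f^k(T)|=|f^k(L)|+|f^k(M)|+|f^k(R)|$,
\[
\mathbf{Cr}\big(f^k(M),f^k(T)\big)=\frac{|f^k(M)|}{|f^k(L)|}+\frac{|f^k(M)|^2}{|f^k(L)|\,|f^k(R)|}+\frac{|f^k(M)|}{|f^k(R)|}\ \le\ \Big(1+\tfrac1\tau\Big)^2-1\ <\ \Big(1+\tfrac1\tau\Big)^2 .
\]
Combining the two estimates yields $\mathbf{Cr}(M,T)\le\big(1+\tfrac1\tau\big)^2 e^{\widetilde{C_0}\gamma}=K$.

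It remains to pass from this macroscopic control to the pointwise bound on $|Df^k|$. Both ingredients are inherited by every sub-configuration $M'\Subset T'$ with $M'\subset M$ (the relevant iterated images still avoid $\crit(f)$, and the Koebe space of the image of $M'$ inside $f^k(T')$ only increases), so $\mathbf{Cr}(M',T')\le K$ for all such pairs; applying this to the configurations obtained by splitting $M$ at its interior points and using the mean value theorem produces, in the standard way (as in \cite{EdFG,dFdM}), the two-sided estimate $1/K\le |Df^k(x)|/|Df^k(y)|\le K$ for all $x,y\in M$. The main technical obstacle is exactly the cross-ratio inequality: although the hypothesis that $f^k|_T$ is a diffeomorphism forces $T$ and all its iterates to avoid $\crit(f)$, some $f^j(T)$ may lie arbitrarily close to a critical point, where $|Sf|$ is unbounded; the non-flat normal form isolates this unbounded part inside the power maps, whose negative Schwarzian only moves the cross-ratio in the favourable direction, leaving a contribution governed by a constant $\widetilde{C_0}$ that depends only on $f$ (through the $C^3$-size and the non-flatness data of the charts $\phi$ and of the diffeomorphic pieces of the decomposition).
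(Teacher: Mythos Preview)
The paper does not prove this lemma: it is stated as a standard tool (the Koebe Distortion Principle for maps with non-flat critical points) and used without argument, as is customary in this literature. Your sketch follows exactly the classical route---a cross-ratio inequality obtained by telescoping and using that the power maps in the non-flat normal form have negative Schwarzian, combined with the space hypothesis on the image---so there is nothing to compare; your approach \emph{is} the standard one the paper is implicitly citing.

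One small remark on the write-up: the passage from the macroscopic bound $\mathbf{Cr}(M,T)\le K$ to the pointwise derivative estimate is the step that actually requires care, and your phrase ``the Koebe space of the image of $M'$ inside $f^k(T')$ only increases'' is loose---it is true when you shrink $M'$ inside $M$ while keeping $T'=T$, but not for arbitrary $T'$. The clean way is the usual limiting argument: for $x\in M$ take $M_\varepsilon=[x,x+\varepsilon]\subset M$ inside $T$, note that the $\tau$-space of $f^k(M_\varepsilon)$ in $f^k(T)$ is at least $\tau$, and let $\varepsilon\to 0$ so that the cross-ratio distortion converges to an expression involving $Df^k(x)$ and macroscopic ratios of $|L|,|R|,|f^k(L)|,|f^k(R)|$; comparing with the same limit at another point $y\in M$ yields the two-sided bound with exactly the constant $K=(1+1/\tau)^2\exp(\widetilde{C_0}\gamma)$. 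Since you already cite \cite{EdFG,dFdM} for this step, the sketch is acceptable, but if you expand it, this is the point to make precise.
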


	Next result give us a decomposition of the iterates of intervals in the dynamical partition.

\begin{lemma}\label{lemma_decompositiontwobridge}  (Decomposition) \cite[Lemma 5.1]{EdFG}
	Given $\varepsilon>0$ there exists $n_2 \in \nt$, depending on $f$ and $\varepsilon$, with the following property: given $n \geq n_2$, $I \in \mathcal{P}_{n}$ and $k \in \nt$ for which $f^{j}(I)$ is contained in an element of $\mathcal{P}_{n}$, for all $1\leq j \leq k$, we have the following decomposition:
	$$f^{k}(I^*)= \widehat{\phi}_k \circ \widehat{\phi}_{k-1} \circ \dots \circ \widehat{\phi}_1$$
	where,
	\begin{enumerate}
		\item For at most $7$ values of $i \in \{1, \cdots, k\}$, $\widehat{\phi}_i$ is a diffeomorphism with bounded distortion by $1+ \varepsilon$.
		\item For at most $6$ values of $i \in \{1, \cdots, k\}$, $\widehat{\phi}_i$ is the restriction of $f$ to some interval contained in $U_i$.
		\item For the reminder values of $i$, $\widehat{\phi}_i$ is either the identity or a diffeomorphism with negative Schwarzian derivative.
	\end{enumerate}	
\end{lemma}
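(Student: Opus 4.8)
The plan is to follow the forward orbit of the interval $I^{*}$ through the dynamical partition $\mathcal{P}_n$, to isolate the (uniformly bounded) number of indices at which this orbit comes close to $\crit(f)$, and to treat the long stretches in between using the negative Schwarzian of deep iterates (Lemma \ref{lemma_schwarzianderivative}) together with the Koebe principle (Lemma \ref{KoebeDistortionPrinciple}), with real bounds (Theorem \ref{teobeau} and Corollary \ref{corbeau}) supplying both the bounded geometry and the scaled neighbourhoods that these tools require. I would first record two preliminary facts valid for $n\ge n_0$: by real bounds, $I^{*}$ (and likewise each $f^{j}(I^{*})$, read inside the cluster formed by its own element of $\mathcal{P}_n$ and the two adjacent ones) contains a $\tau_0$-scaled neighbourhood of the corresponding central interval, with $\tau_0=\tau_0(C_0)>0$; and, since the hypothesis forces the intervals $f^{j}(I)$, $0\le j\le k-1$, to be pairwise essentially disjoint, a standard argument gives $\sum_{j=0}^{k-1}|f^{j}(I^{*})|\le\gamma_0$ for a constant $\gamma_0$ depending only on the signature.

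Next I would locate the \emph{critical times}. An element of $\mathcal{P}_n$ is adjacent to a critical point $c'$ only if it is $I_n(c')$ or $I_{n+1}(c')$, so $f^{j}(I^{*})$ meets a fixed small neighbourhood of $c'$ only when $f^{j}(I)$ is one of those two intervals or one of their neighbours; because the orbit passes each point of the circle monotonically, this occurs for only a bounded number of indices $j$, and for a bi-critical map a direct count bounds the total number of such indices by $6$. At each critical time I factor out a single application of $f$, on an interval contained in a fixed small neighbourhood $U_i$ of the relevant critical point: these are the factors of type (2), at most $6$ of them. Removing the critical times cuts the remaining time interval into at most $7$ maximal blocks on each of which $f$ is a diffeomorphism.

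For each such diffeomorphism block $f^{m}|_{f^{j}(I^{*})}$ I would split off, from its two ends, short correction pieces and leave a central piece with controlled Schwarzian sign. Using that every element of $\mathcal{P}_n$ is an iterate of $I_n(c)$ or of $I_{n+1}(c)$ with exponent below the corresponding period (Corollary \ref{corbeau} and the combinatorics), the central piece can, after this reorganisation, be read as a restriction of one of the iterates covered by Lemma \ref{lemma_schwarzianderivative}, hence has negative Schwarzian derivative; together with the composition rule $S(\varphi\circ\psi)=((S\varphi)\circ\psi)\cdot(D\psi)^{2}+S\psi$ this puts the central pieces into type (3) (the surplus factors being taken to be the identity). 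Each correction piece is a diffeomorphism of a uniformly small interval lying away from $\crit(f)$ once $n$ is large, so by Lemma \ref{KoebeDistortionPrinciple} (with the scaled neighbourhood of space $\tau_0$ and the bound $\gamma_0$) its distortion is at most $(1+\tau_0^{-1})^{2}\exp(\widetilde{C_0}\gamma_0)$, and — shrinking the correction pieces and invoking the refining property — this can be forced below $1+\varepsilon$ by taking $n\ge n_2$ large; counting one correction piece per relevant end gives at most $7$ factors of type (1). Choosing $n_2\ge\max\{n_0,n_1\}$ keeps real bounds and Lemma \ref{lemma_schwarzianderivative} in force, and padding with identities produces exactly $k$ factors, completing the decomposition.

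The main obstacle is the reorganisation in the last step: one has to reindex each diffeomorphism block so that its bulk is genuinely a restriction of an iterate $f^{\ell}$ with $\ell$ below the period to one of the root intervals $I_n(c)$, $I_{n+1}(c)$ — this is exactly where Lemma \ref{lemma_schwarzianderivative} applies — while tracking how many surplus transition sub-blocks this costs, so that the totals stay within $6$ and $7$. Tied to it is the combinatorial count of near-critical visits, where the constants $6$ and $7$ are actually produced; both rest on the Markov structure of $\mathcal{P}_n$ and on the real bounds, not on any new analytic input.
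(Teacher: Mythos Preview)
The paper does not prove this lemma: it is quoted verbatim from \cite[Lemma~5.1]{EdFG} and used as a black box, so there is no proof in the present paper to compare your proposal against.

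That said, your outline follows the standard architecture of such decomposition lemmas (isolate the bounded number of passages near $\crit(f)$, use Lemma~\ref{lemma_schwarzianderivative} on the long diffeomorphic stretches, and control the remaining transition pieces by Koebe), but two points are not yet under control. First, the distortion claim for the type~(1) factors is circular as written: you invoke Lemma~\ref{KoebeDistortionPrinciple} with a \emph{fixed} space $\tau_0$ and a \emph{fixed} $\gamma_0$, which yields a fixed constant $K=(1+\tau_0^{-1})^{2}\exp(\widetilde{C_0}\gamma_0)$, and then assert this ``can be forced below $1+\varepsilon$'' by taking $n$ large. The Koebe bound only tends to $1$ if $\tau\to\infty$ (or $\gamma\to 0$); what actually makes this work in \cite{EdFG} is that the transition pieces sit inside elements of $\mathcal{P}_n$ while the ambient interval $T$ used for Koebe lives at a much coarser scale, so the space $\tau$ grows like $\widehat{\mu}^{-cn}$ by Corollary~\ref{corbeau}. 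Your parenthetical ``shrinking the correction pieces and invoking the refining property'' is pointing at this, but the argument as written never lets $\tau$ depend on $n$. Second, the bookkeeping ``one correction piece per relevant end gives at most $7$'' is not justified: seven blocks have fourteen ends, and you have not explained which ends are absorbed into the type~(2) factors or into each other, so the constants $6$ and $7$ are asserted rather than derived. Both issues are repairable along the lines you sketch, but they are exactly the places where the actual work in \cite{EdFG} lies.
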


\subsection{Controlled bi-critical commuting pairs}
    
    Real bounds also give us some control on the length of certain intervals, dynamically defined, and some control on the first and second derivatives of the renormalizations.   

   \begin{definition}($M$-controlled bi-critical commuting pair)\label{def:Kcontrolledcommutingpair}
    Let $M >1$ and let $\zeta=(\eta, \xi)$  be a normalized $C^3$ bi-critical commuting pair, with period $a =\chi(\zeta) \in  \nt$ and with 
    critical points given by $0$ and $\beta \in (0,\xi(0))$. 
     We say that $\zeta$ is $M$-controlled if the following conditions are satisfied:
    \begin{enumerate}
    	\item $1/M \leq \xi(0) \leq M$; 
    	\item $1/M \leq \xi(0)-\eta(\xi(0))$;
    	\item $1/M \leq \eta^{a-1}(\xi(0))- \eta^{a}(\xi(0))$;
    	\item $1/M \leq \eta^{a}(\xi(0)) $;
    	\item $\eta^{a+1}(\xi(0)) \leq - 1/M$;
    	\item $\| \xi\|_{C^3([\eta(0),0])} \leq M$;    	
    	\item $\| \eta\|_{C^3([0, \xi(0)]\setminus \{\beta\})} \leq M$; 
    	\item $1/M \leq D \eta(x)$, for all $x \in [\eta^{a}(\xi(0)),\xi(0)] \setminus \{\beta\}$. 
    \end{enumerate}
If $\beta \notin (0, \eta^{a+1}(\xi(0)))$ or $\beta \notin (\eta(\xi(0)), \xi(0))$, 
then the following extra condition is also satisfied:
\begin{enumerate}
    	\item [(9)] $\eta(\beta)- \beta \geq 1/M$ and $\beta - \eta^{-1}(\beta) \geq 1/M$.    	
    \end{enumerate}
\end{definition}

Analogously, in the case that $\beta \in (\eta(0),0)$, we can extend  the definition of $M$-controlled bi-critical commuting pair.

 The next result guarantees that every $C^3$ bi-critical circle map with irrational rotation number gives rise to a sequence of bi-critical controlled commuting pairs, after a finite number of renormalizations.

    \begin{theorem}\label{thm:Mcontrolledmcm}
    	There exists a universal constant $M > 1$ with the following property. For any given $C^3$ bi-critical circle map $f$ with irrational rotation number there exists $n_0=n_0(f) \in \nt$ such that the critical commuting pair $\mathcal{R}^nf$ is $M$-controlled for any $n \geq n_0$.
    \end{theorem}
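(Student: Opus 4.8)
Throughout, fix the critical point $c=c_0$ and renormalize around it, writing $c_1$ for the other critical point. The plan is to verify, for all $n$ beyond a threshold $n_0=n_0(f)$, each numbered condition of Definition~\ref{def:Kcontrolledcommutingpair} for the pair $\mathcal{R}^n f=(\eta,\xi)$, where $\eta$ and $\xi$ are the rescalings by the factor $1/|I_{n+1}(c)|$ of $f^{q_{n+1}}|_{I_n(c)}$ and $f^{q_n}|_{I_{n+1}(c)}$, and $\beta$ denotes the second critical point of the pair (the preimage of $c_1$ under the relevant branch). One first records the dictionary $\xi(0)=|I_n(c)|/|I_{n+1}(c)|$, $\eta(0)=-1$, $a=\chi(\mathcal{R}^n f)=a_{n+1}$, and, using $q_{n+2}=a_{n+1}q_{n+1}+q_n$, that $\eta(\xi(0))$, $\eta^{a-1}(\xi(0))$, $\eta^{a}(\xi(0))$ and $\eta^{a+1}(\xi(0))$ are the rescaled copies of $f^{q_n+q_{n+1}}(c)$, $f^{q_{n+2}-q_{n+1}}(c)$, $f^{q_{n+2}}(c)$ and $f^{q_{n+2}+q_{n+1}}(c)$, the last of which lies on the $I_\xi$ side of $0$. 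We take $n_0$ larger than the thresholds coming from Theorem~\ref{teobeau}, Lemma~\ref{lemma_schwarzianderivative} and Lemma~\ref{lemma_decompositiontwobridge} (the latter applied with a fixed $\varepsilon$, say $\varepsilon=1$), and large enough that $\beta$ is well-defined, with one of $\eta,\xi$ a diffeomorphism and the other having $\beta$ as its sole interior critical point.

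\emph{Metric conditions} (1)--(5) \emph{and} (9). Each quantity occurring here is a rescaled length of, or rescaled distance between endpoints of, atoms of $\mathcal{P}_{n+1}(c)$ or $\mathcal{P}_{n+2}(c)$, all at a scale comparable to $|I_{n+1}(c)|$. Indeed, (1) is the comparability of the adjacent atoms $I_n(c),I_{n+1}(c)$ of $\mathcal{P}_n(c)$; (4) that of $I_{n+1}(c),I_{n+2}(c)$; (2) compares $I_{n+1}(c)$ with $f^{q_n}(I_{n+1}(c))$, the atom of $\mathcal{P}_n(c)$ adjacent to the endpoint $f^{q_n}(c)$ of $I_n(c)$; and (3), (5) compare $I_{n+1}(c)$ with an atom of $\mathcal{P}_{n+1}(c)$ reachable from $I_{n+2}(c)$ (resp.\ from $I_{n+1}(c)$) through a chain of boundedly many adjacent atoms. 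In every case the Universal Real Bounds (Theorem~\ref{teobeau}) applied at levels $n,n+1,n+2$, together with the exponential refining of Corollary~\ref{corbeau} to bridge the boundedly many refinement steps between those levels, bound the quantity in $[1/M_1,M_1]$, and yield $\eta^{a+1}(\xi(0))\le -1/M_1$ for (5), where $M_1$ depends only on the real bounds constant $C_0$. Condition (9), in the non-exceptional case, follows identically: $\beta$, $\eta(\beta)$ and $\eta^{-1}(\beta)$ are then interior points of atoms of $\mathcal{P}_{n+1}(c)$, resp.\ $\mathcal{P}_{n+2}(c)$, separated at scale $|I_{n+1}(c)|$; in the exceptional case (9) is not required.

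\emph{Regularity conditions} (6)--(8). The maps to control are high iterates of $f$ on short intervals: the diffeomorphism $\xi=f^{q_n}|_{I_{n+1}(c)}$ and each monotone branch of $\eta=f^{q_{n+1}}|_{I_n(c)}$ obtained by cutting at $\beta$. The plan is to apply the Decomposition Lemma (Lemma~\ref{lemma_decompositiontwobridge}) to each of these, writing it, up to affine rescalings at the endpoints of the intervals along the orbit, as a composition of at most $7$ diffeomorphisms of distortion at most $1+\varepsilon$, at most $6$ restrictions of $f$ to intervals contained in neighborhoods of the critical points, and a remaining composition of maps each of which is the identity or has negative Schwarzian derivative. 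For $\xi$ none of the critical restrictions actually sits at a critical point, so after rescaling they too are bounded-distortion diffeomorphisms; for the branch of $\eta$ touching $\beta$ exactly one critical restriction sits at $c_1$ and carries the criticality of $\beta$ (which is $\ge 3$, since $f\in C^3$, so $f$ has a definite $C^3$ bound there). The Real Bounds furnish a definite Koebe space around every interval occurring along these orbits, so Lemma~\ref{KoebeDistortionPrinciple} bounds the distortion of the diffeomorphic factors, while the negativity of the Schwarzian derivative of the iterates (Lemma~\ref{lemma_schwarzianderivative}, i.e.\ $S(\mathcal{R}^n f)<0$), combined with that Koebe space, keeps the cross-ratio distortion — hence, after rescaling, the $C^2$- and $C^3$-norms — of the negative-Schwarzian factors bounded. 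Composing boundedly many factors of bounded $C^3$ norm gives $\|\xi\|_{C^3([\eta(0),0])}\le M_2$ and $\|\eta\|_{C^3([0,\xi(0)]\setminus\{\beta\})}\le M_2$, which is (6)--(7). For (8), the interval $[\eta^a(\xi(0)),\xi(0)]$ has length at most $M_1$ and lies at distance at least $1/M_1$ from $0$ by (4), and $\eta$ sends each monotone component of this interval minus $\beta$ onto an interval of length at least $1/M_1'$ by the Real Bounds (using the metric estimates of the previous paragraph for the images $\eta(\beta)$, $\eta^{a+1}(\xi(0))$, $\eta(\xi(0))$); the mean value of $D\eta$ over each component is therefore at least $1/M_1''$, which the bounded distortion already obtained promotes to the pointwise bound $D\eta\ge 1/M$.

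Finally one sets $M$ equal to the maximum of the finitely many universal constants produced above (so $M$ depends only on $C_0$, hence is universal in the family), takes $n_0=n_0(f)$ as fixed at the outset, and concludes that $\mathcal{R}^n f$ satisfies all conditions of Definition~\ref{def:Kcontrolledcommutingpair} for every $n\ge n_0$. The argument is symmetric under exchanging $c_0$ and $c_1$, and the case in which the hidden critical point lies on the $I_\xi$ side is handled by the extension of the definition noted after Definition~\ref{def:Kcontrolledcommutingpair}, with the roles of $\eta$ and $\xi$ swapped. The step I expect to be the main obstacle is the regularity part (6)--(8): extracting uniform bounds on the \emph{second and third} derivatives after rescaling — for which the Koebe Principle alone does not suffice and one genuinely needs the negative-Schwarzian (cross-ratio) control — and making that control interact correctly with the hidden critical point $\beta$; by contrast the metric conditions are a lengthy but routine bookkeeping with the dynamical partition and the real bounds.
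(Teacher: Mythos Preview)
Your plan matches the paper's own proof in structure: the paper also splits the nine conditions into the metric ones handled by real bounds and the regularity ones handled by the $C^r$ bounds on renormalization. The paper's proof is very terse and simply cites the relevant literature; your sketch expands those citations into arguments. Two places deserve comment.

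For conditions (6)--(7), the paper does not argue from scratch but invokes \cite[Lemma~5.1]{EdFG} (your Lemma~\ref{lemma_decompositiontwobridge}) together with \cite[Theorem~A.4]{dFdM}, and notes in the Remark following the theorem that the uni-critical proof carries over to the bi-critical setting. Your sketch via decomposition, Koebe and negative Schwarzian is exactly the strategy behind that cited theorem, so you are on the right track; but be aware that extracting the full $C^3$ bound from cross-ratio/Schwarzian control is a substantial computation (this is what the appendix of \cite{dFdM} does), so your ``composing boundedly many factors of bounded $C^3$ norm'' is hiding real work. You correctly flag this as the main obstacle.

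For condition (9) your argument has a genuine gap. You write that $\beta$, $\eta(\beta)$, $\eta^{-1}(\beta)$ are ``separated at scale $|I_{n+1}(c)|$'' and that this ``follows identically'' from Theorem~\ref{teobeau}. It does not: the free critical point $\beta=\mathfrak{c}_n$ is a preimage of $c_1$, not an endpoint of $\mathcal{P}_{n+1}(c_0)$, so the real bounds for the partition around $c_0$ say nothing directly about distances involving $\beta$. The interval $[\eta(\beta),\beta]=\widehat{\Delta}_n$ is comparable only to the atom $\Delta_k$ of $\mathcal{P}_{n+1}(c_0)$ that contains $\beta$, and when $a_{n+1}$ is large that atom can be much smaller than $|I_{n+1}(c_0)|$. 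The paper handles this by citing \cite[Lemmas~4.1,~4.2,~3.3]{EdF}, which are multicritical results showing that $|\widehat{\Delta}_n|$ is in fact comparable to $\xi(0)$ (and the analogous statement for $\eta^{-1}(\beta)$); these use the dynamical partition around the \emph{other} critical point $c_1$ and are not consequences of real bounds around $c_0$ alone. You should either cite those lemmas or outline why mapping $\widehat{\Delta}_n$ forward by the diffeomorphic iterate onto $I_{n+1}(c_1)$, together with real bounds for \emph{both} critical partitions, gives the required comparability.
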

    
    \begin{proof}[Proof of Theorem \ref{thm:Mcontrolledmcm}]
    	By universal real bounds (Theorem \ref{teobeau}), there exists a constant $C>1$ and $n_0 \in \nt$ such that for any $n \geq n_0$ properties $(1)$ to $(5)$ and property $(8)$ hold. Moreover, by \cite[Lemma 4.2, Lemma 4.1]{EdF} and  real bounds we know that $\left|[\eta(\beta), \beta] \right|$ is comparable with  $\xi(0)$, then by Property $(1)$ $|\eta(\beta)- \beta|\geq 1/M$. By \cite[Lemma 3.3]{EdF}, $\left|\beta - \eta^{-1}(\beta)\right| \geq 1/M$, and hence property $(9)$ is also true. Finally, from \cite[Lemma 5.1]{EdFG} and \cite[Theorem A.4 (pages 389-381) and remark (page 381)]{dFdM}, there exists $n_0 \in \nt$ such that for all $n \geq n_0$ properties $(6)$ and $(7)$ hold. 
    \end{proof}
   
   \begin{remark}
   	 By \cite[Theorem A.4 (pages 389-381) and remark (page 381)]{dFdM}, the renormalizations of a $C^3$ uni-critical circle map are controlled as in our definition. That result and its proof can be generalized to the bi-critical case.
   \end{remark}
  
     As $\mathcal{R}^nf$ and $\mathcal{R}^ng$ satisfy Theorem \ref{thm:Mcontrolledmcm} and Inequality \eqref{ineq:convergencerenormalizations} for $d=2$, we know that $\lim_{n \to \infty} \log \frac{|h(I_{n}(c_i))|}{|I_{n}(c_i)|}$ exists and converge exponentially. Conjugating the map $f$ (or $g$) by a $C^{\infty}$ diffeomorphism we can achieve $\lim_{n \to \infty} \log \frac{|h(I_{n}(c_i))|}{|I_{n}(c_i)|}=0$ at an exponential rate. Therefore, we can always assume that there exist $C>0$ and $\lambda \in \{0,1\}$ such that for any $n \geq n_0$ and $i \in (0,1)$ we have
   \begin{equation} \label{eq:limitfundintervals}
   	\left| \log \frac{|h(I_{n}(c_i))|}{|I_{n}(c_i)|} \right| \leq C\, \lambda^n.
   \end{equation}


\section{The two-bridges partition $\widehat{\mathcal{P}}_n$}\label{sec:nuevaparticion}
	In this section we recall the construct of the sequence of the two-bridges partitions $\{\widehat{\mathcal{P}}_n\}_{n \in \nt}$, defined in \cite[Section 4.1.1]{EG}. Let $c_0$ and $c_1$ be the two critical points of $f$. For $n \in \nt$, the first return map to $J_n(c_0)$, which is given by 
	$(f^{q_{n+1}}|_{I_n(c_0)}, f^{q_n}|_{I_{n+1}(c_0)})$, has two critical points: one being $c_0$ and the other one being the unique pre-image of $c_1$ in $J_n(c_0)$. Such a preimage  in  $J_n(c_0)$ is called \textit{the free critical point at level $n$}, and is denoted by $\mathfrak{c}_{n,0}$. Without lost of generality, let us assume that $\mathfrak{c}_{n,0}$ belongs to $I_n(c_0)$. This means that there exists $j < q_{n+1}$ such that $c_1 \in f^{j}(I_n(c_0))$ and $f^{j}(\mathfrak{c}_{n,0})=c_1$. Note that if $c_1$ belongs to the positive orbit of $c_0$ then there exists $m \in \mathbb{N}$ such that for all $n \geq m$  the return maps $(f^{q_{n+1}}|_{I_n(c_0)}, f^{q_n}|_{I_{n+1}(c_0)})$ are unicritical and we will be in the situation studied in \cite{KT}. So, from now on, let us assume that $c_1$ does not belong to the positive orbit of $c_0$. Moreover we will assume that $n \geq n_0$ where $n_0$ is given by Theorem \ref{thm:Mcontrolledmcm}.

	\begin{definition}[Two-bridges level] A natural number $n$ is a two-bridges level for $f$ at $c_0$ if $a_{n+1} \geq 23$, the free critical point $\mathfrak{c}_{n}$ belongs to $I_n(c_0)\setminus I_{n+2}(c_0)$ and moreover $\mathfrak{c}_{n} \in \{\Delta_{11}, \Delta_{12}, \dots, \Delta_{a_{n+1}-10} \}$, where $\Delta_j = f^{(j-1)q_{n+1}+q_n}(I_{n+1}(c_0))$ for all $j \in \{1, \dots, a_{n+1}\}$.
	\end{definition}
	

	We recall the inductive construction of the two-bridges partition $\widehat{\mathcal{P}}_{n}$ for $n \geq 0$. To start we define $\widehat{\mathcal{P}}_{0}$ be the same as $\mathcal{P}_{0}$, now for $n \in \nt$ we have two cases:
	
	\noindent \textbf{Case 1:} $n$ is a two-bridges level for $f$ at $c_0$. Then we consider the following six pairwise disjoint intervals in $I_n(c_0)\setminus I_{n+2}(c_0)$:
	\[
	\Delta_1=f^{q_n}(I_{n+1}(c_0)),
	\ \ \widehat{\Delta}_n=[f^{q_{n+1}}(\mathfrak{c}_{n}),\mathfrak{c}_{n}]
	\]
	\[
	\Delta_{a_{n+1}} = f^{(a_{n+1}-1)q_{n+1}} (\Delta_1) = [f^{q_{n+2}}(c_0), f^{(a_{n+1}-1)q_{n+1}+q_n}(c_0)].
	\]
	For each $j \in \Z$, denote by $\Delta_1^j$, $\widehat{\Delta}_n^j$ and $\Delta_{a_{n+1}}^j$  the intervals $f^{jq_{n+1}}(\Delta_1)$, $f^{jq_{n+1}}(\widehat{\Delta}_n)$ and $f^{jq_{n+1}}(\Delta^{a_{n+1}})$, respectively.
	Let $r(n), \ell(n) \in \{0, \dots, a_{n+1}\}$ be given by
	\[
	r(n) = \min\{j \in \nt : \widehat{\Delta}_{n}^{-j} \cap \Delta_1^j \neq \emptyset \} \text{ \ and \  } \ell(n) = \min\{j \in \nt : \Delta_{a_{n+1}}^{-j} \cap \widehat{\Delta}_n^{j} \neq \emptyset \}.
	\]
	Note that the intersections above may be given by a single point (in this case the critical points have the same orbit). Just to fix ideas, let us assume that $\Delta_{a_{n+1}}^{-\ell(n)} \setminus \widehat{\Delta}_n^{\ell(n)} \subset \widehat{\Delta}_n^{\ell(n) + 1}$ \ and \ $\widehat{\Delta}_{n}^{-r(n)} \setminus \Delta_1^{r(n)}  \subset \Delta_{1}^{r(n)+1}$, we consider $\Delta_{R}^{n+1} =
	\Delta_1^{r(n)} \cup \widehat{\Delta}_n^{-r(n)}$ and $\Delta_{L}^{n+1}= \Delta_{a_{n+1}}^{-\ell(n)} \cup \widehat{\Delta}_n^{\ell(n)}$. We define 		$\widehat{\mathcal{P}}_{n+1}$ inside $I_n(c_0)$, for a two-bridges level $n$, as 
	\begin{equation*}
	\widehat{\mathcal{P}}_{n+1}|_{I_n(c_0)} =
	\{I_{n+2}(c_0), \{ \Delta_{a_{n+1}}^{-j} \}_{j=0}^{j=\ell(n) - 1}, \Delta_L^{n+1},
	\{ \widehat{\Delta}_n^j \}_{j=1-r(n)}^{j=\ell(n)-1}, \Delta_R^{n+1},  \{ \Delta_{1}^{j} \}_{j=0}^{j=r(n)-1} \}.
	\end{equation*}
	
	\begin{figure}[!ht]
	  \includegraphics[width=6.2in]{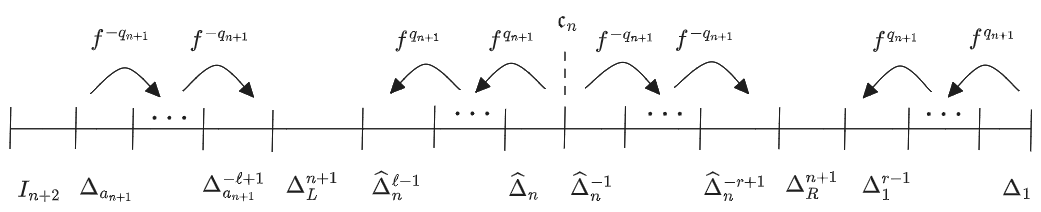}
	  \caption{\label{decomposition}  The two-bridges partition $\widehat{\mathcal{P}}_{n+1}|_{I_n(c_0)}$ in Case 1.}
	\end{figure}
	
	Now, we spread the definition of $\widehat{\mathcal{P}}_{n+1}$ to the circle as usual, that is:
	\[
	\widehat{\mathcal{P}}_{n+1}= \{f^i(I_{n+1}(c_0)) : 0 \leq i \leq q_{n} -1\} \ \bigcup \ \{f^j(I)|_{I \in \widehat{\mathcal{P}}_{n+1} \cap I_n(c_0)}: 0 \leq j \leq q_{n+1}-1\}.
	\]
	\noindent \textbf{Case 2:}  $n$ is not a two-bridges level for $f$ at $c_0$, then  $\widehat{\mathcal{P}}_{n+1}$ coincides with $\mathcal{P}_{n+1}$ except in the following cases. For any given endpoint $v$ of $\widehat{\mathcal{P}}_{n}$, let $I' \in \mathcal{P}_{n+1}$ be the interval containing $v$ and let $w$ be the endpoint of $I'$ closest to $v$ (in the Euclidean distance). 
	\begin{itemize}
		\item If $v$ is the mid point of $I'$, then we add $v$ to the set of endpoints of $\widehat{\mathcal{P}}_{n+1}$.
		\item If $v$ is not the middle point of $I'$: we remove $w$ from the set of endpoints of $\widehat{\mathcal{P}}_{n+1}$ and we add $v$ to the set of endpoints of the partition $\widehat{\mathcal{P}}_{n+1}$.
	\end{itemize}

\subsection{Properties of the two-bridges partition}

    For any $n \in \nt$ we denote by   $\widehat{E}_{n}$ the set of the endpoints of $\widehat{\mathcal{P}}_{n}$. We observe that, from the construction, when we lost one endpoint of $E_{k}$ by defining $\widehat{E}_{k}$, that missing endpoint will appear in the next level, that is in $\widehat{E}_{k+1}$. 
    From the construction we have the following properties of $\{\widehat{\mathcal{P}}_n\}_{n \in \nt}$:
	\begin{enumerate}
		\item Each partition $\widehat{\mathcal{P}}_n$ is dynamically defined: each $v\in \widehat{E}_n$  is an iterate (either forward or backward) of a first return map at $c_0$ or $c_1$.
		\item The intervals $I_n(c_0)$ and $I_{n+1}(c_0)$ belong to $\widehat{\mathcal{P}}_{n+1}$.
		\item The sequence of partitions $\{\widehat{\mathcal{P}}_{n}\}$ is nested.
		\item The sequence of partitions $\{\widehat{\mathcal{P}}_{n}\}$ is refining.
		\item If $n$ is a two-bridges level for $f$ at $c_0$ then  $\mathfrak{c}_{n} \in \widehat{E}_{n+1}$.
		\item Any endpoint of the standard partition $\mathcal{P}_n$ belongs to $\widehat{E}_m$ for some $m \geq n$.
		\item (Real bounds) There exists an universal constant $\widehat{C} > 1$, such that $\widehat{C}^{-1} |I| \leq|J | \leq \widehat{C} |I|$ for each pair of adjacent atoms $I, J\in \widehat{\mathcal{P}}_n$.
	\end{enumerate}

	\begin{corollary}\label{coro_twolevelwidehatP} (Exponential refining) There exist a constant $\widehat{\mu} \in (0,1)$ and $n_0 \in \nt$, both universal in $\mathcal{F}_{2,d}$, such that for all $m>n\geq n_0$: if $\widehat{\mathcal{P}}_{m} \ni I \subseteq J \in \widehat{\mathcal{P}}_{n}$, then $|I| \leq \widehat{\mu}^{ m-n}|J|$. 
\end{corollary}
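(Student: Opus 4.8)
The plan is to deduce the exponential refining of $\{\widehat{\mathcal{P}}_n\}$ from the real bounds for this sequence (property $(7)$ of the two‑bridges partition) together with its nestedness and refining properties $(3)$–$(4)$, following exactly the scheme by which Corollary \ref{corbeau} is obtained from Theorem \ref{teobeau}. The heart of the matter is the following bounded‑step subdivision claim: there are a universal integer $k_0 \geq 1$ and $n_1 \geq n_0$ such that for every $n \geq n_1$ and every atom $J \in \widehat{\mathcal{P}}_n$, the atom $J$ contains at least two atoms of $\widehat{\mathcal{P}}_{n+k_0}$. Granting this, pick two adjacent such atoms $I_1, I_2 \subseteq J$ in $\widehat{\mathcal{P}}_{n+k_0}$; by property $(7)$ we have $|I_2| \geq \widehat{C}^{-1}|I_1|$, so $|J| \geq |I_1| + |I_2| \geq (1 + \widehat{C}^{-1})|I_1|$, whence every atom of $\widehat{\mathcal{P}}_{n+k_0}$ contained in $J$ has length at most $\theta\,|J|$, with $\theta := \widehat{C}/(\widehat{C}+1) \in (0,1)$.

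Next I would iterate. Given $m > n \geq n_1$ and atoms $\widehat{\mathcal{P}}_m \ni I \subseteq J \in \widehat{\mathcal{P}}_n$, set $s = \lfloor (m-n)/k_0 \rfloor$. By nestedness there is a chain $I \subseteq J_s \subseteq J_{s-1} \subseteq \dots \subseteq J_1 \subseteq J_0 = J$ with $J_j \in \widehat{\mathcal{P}}_{n + jk_0}$; applying the previous paragraph to each consecutive pair $J_{j-1} \supseteq J_{j}$ gives $|J_{j}| \leq \theta\,|J_{j-1}|$, so that $|I| \leq |J_s| \leq \theta^{\,s}|J| \leq \theta^{\,(m-n)/k_0 - 1}|J|$. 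Putting $\widehat{\mu} := \theta^{1/k_0} \in (0,1)$ this reads $|I| \leq \theta^{-1}\widehat{\mu}^{\,m-n}|J|$, and the universal constant $\theta^{-1}$ is disposed of exactly as in the passage from Theorem \ref{teobeau} to Corollary \ref{corbeau} (adjusting $\widehat\mu$ and $n_0$). The resulting $\widehat\mu$ depends only on $\widehat{C}$ and $k_0$, both universal in $\mathcal{F}_{2,d}$, as required.

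It remains to establish the bounded‑step subdivision claim, which is where the real work lies. In Case $2$ of the construction (when $n$ is not a two‑bridges level), $\widehat{\mathcal{P}}_{n+1}$ differs from the classical partition $\mathcal{P}_{n+1}$ only through the local endpoint modifications described above, so one can import the classical fact underlying Corollary \ref{corbeau} — that every atom of $\mathcal{P}_n$ is split into at least two atoms of $\mathcal{P}_{n+2}$ — at the price of increasing $k_0$ by an absolute amount to absorb the shifted endpoints and the one‑level delay by which an endpoint lost in passing from $E_k$ to $\widehat{E}_k$ reappears in $\widehat{E}_{k+1}$. The delicate case is Case $1$: at a two‑bridges level $n$ a whole block of $a_{n+1}$ combinatorial steps is compressed into the single transition from $\widehat{\mathcal{P}}_n$ to $\widehat{\mathcal{P}}_{n+1}$, so one must check that each atom produced there — the translated copies $\Delta_1^{\,j}$, $\widehat{\Delta}_n^{\,j}$, $\Delta_{a_{n+1}}^{-j}$, and especially the two merged bridge atoms $\Delta_R^{n+1}$ and $\Delta_L^{n+1}$ — contains at least two atoms of $\widehat{\mathcal{P}}_{n+k_0}$. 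For the translated copies this holds because they are iterates of $I_{n+1}(c_0)$, which is strictly subdivided by level $n+3$ (it contains $I_{n+3}(c_0)$ together with $a_{n+2}+1 \geq 2$ further pieces) and whose relevant iterates remain inside a single fundamental domain, hence stay disjoint; for $\Delta_R^{n+1}$ and $\Delta_L^{n+1}$, which are unions of two atoms comparable by property $(7)$, the two constituents already witness the required splitting as soon as the next level refines one of them, again within a bounded number of steps. I expect the bookkeeping in Case $1$ — pinning down how many levels are needed so that the merged bridge atoms and the boundary copies near $\mathfrak{c}_n$ and $c_0$ are genuinely cut, and verifying this count is independent of $n$ and of $a_{n+1}$ — to be the main obstacle; once the uniform $k_0$ is in hand, the rest is the routine real‑bounds iteration above.
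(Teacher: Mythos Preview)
The paper states this corollary without proof, immediately after listing properties $(1)$--$(7)$ of the two-bridges partition; the intended argument is evidently the same real-bounds-to-exponential-refining scheme by which Corollary~\ref{corbeau} follows from Theorem~\ref{teobeau}, and that is precisely the route you take. Your reduction to a bounded-step subdivision claim and the ensuing iteration are correct.

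The Case~1 bookkeeping you flag as the main obstacle can, however, be bypassed entirely using Remark~\ref{rmk:comparabilitytwopartions}. Given $\widehat{\mathcal{P}}_m \ni I \subseteq J \in \widehat{\mathcal{P}}_n$, choose $I' \in \mathcal{P}_m$ with $I \cap I' \neq \emptyset$, and let $J'' \in \mathcal{P}_n$ be the atom containing $I'$. The remark gives $|I| \leq K|I'|$ with $K = \max\{C+1,\widehat{C}+1\}$; Corollary~\ref{corbeau} gives $|I'| \leq \mu^{m-n}|J''|$; and since $J'' \cap J \supseteq I' \cap I \neq \emptyset$, the remark again gives $|J''| \leq K|J|$. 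Hence $|I| \leq K^2 \mu^{m-n}|J|$, and the prefactor is absorbed into $\widehat{\mu}$ exactly as you describe. This reduces the corollary directly to its classical counterpart with no case analysis of the two-bridges construction; in particular, the uniform $k_0$ you seek exists simply because an atom of $\widehat{\mathcal{P}}_n$ surviving unrefined to level $n+k$ would, via two applications of the remark, force an atom of $\mathcal{P}_{n+k}$ to be comparable in length to an intersecting atom of $\mathcal{P}_n$, contradicting Corollary~\ref{corbeau} once $\mu^{k} < K^{-2}$.
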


For different levels of the two-bridges partition, we have

\begin{lemma}\cite[Lemma 4.6]{EG} \label{lema:difniveles}
	 Let $n, p \in \nt$, and $v$ be an endpoint of $\widehat{\mathcal{P}}_{n+p}$ contained in $J_n(c_0)$. There exist $L \in  \{1,..., p\}$ and $n \leq m_1 \leq \dots \leq m_L$, such that $v = \phi_1 \circ \dots \circ \phi_L (x)$, 	where
	 \begin{enumerate}
	 	\item For each $j \in \{1, \dots, L\}$ we have $\phi_j = f^{k_j q_{m_j +1} + \sigma_j q_{m_j}}$ for $\sigma_j \in \{0, 1\}$ and $k_j \in \Z$ either satisfying $|k_j| \leq \ell(m_j)$ and $|k_j| \leq r(m_j)$ or $|k_j| \leq \lceil a_{m_{j}+1}/2 \rceil$, depending on whether $m_j$ is or is not a two-bridges level for f at $c_0$.
	 	\item For each $j \in \{1, \dots, L -1\}$, the point $\phi_{j+1} \circ \dots \circ \phi_L (x)$ either belongs to $J_{m_j+1}(c_0) \cup \widehat{\Delta}_{m_j}$ or to $J_{m_j+1}(c_0)$, depending on whether $m_j$ is or is not a two-bridges level.
	 	\item There exists $m \in \{m_L, \dots, n + p\}$, such that $x$ belongs to $\{c_0, f^{q_{m+2}}(c_0), \mathfrak{c}_{m} \}$ or to $\{c_0, f^{q_{m+2}} (c_0)\}$, depending on whether $m$ is or is not a two-bridges level.
	 \end{enumerate}
\end{lemma}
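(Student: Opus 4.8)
\emph{Strategy.} The plan is to argue by induction on $p$, exploiting the inductive definition of the two-bridges partitions together with the following structural fact: inside every atom of $\widehat{\mathcal{P}}_{n+1}$ that meets $I_n(c_0)\setminus I_{n+2}(c_0)$, the refinement $\widehat{\mathcal{P}}_{n+p}$ is the image, under a single iterate $f^{k q_{n+1}+\sigma q_n}$ (with $|k|$ controlled and $\sigma\in\{0,1\}$), of $\widehat{\mathcal{P}}_{n+p}$ restricted to one of a universally bounded list of ``seed'' intervals. Since the bridge atoms $\widehat{\Delta}_n$ are iterated copies of a dynamical interval around $c_1$ --- precisely $f^{j}(\widehat{\Delta}_n)=I_{n+1}(c_1)$ for the $j<q_{n+1}$ with $f^{j}(\mathfrak{c}_n)=c_1$ --- the induction is carried out simultaneously at $c_0$ and at $c_1$.

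\emph{Base case $p=1$.} Here $v\in\widehat{E}_{n+1}\cap J_n(c_0)$. If $n$ is not a two-bridges level, $\widehat{\mathcal{P}}_{n+1}$ differs from $\mathcal{P}_{n+1}$ only at finitely many points, each being an endpoint of $\mathcal{P}_{n+1}$ or of $\widehat{\mathcal{P}}_n$, hence of the form $f^{k_1 q_{n+1}+\sigma_1 q_n}(x)$ with $x\in\{c_0,f^{q_{n+2}}(c_0)\}$ and $|k_1|\le\lceil a_{n+1}/2\rceil$. If $n$ is a two-bridges level, the endpoints of $\widehat{\mathcal{P}}_{n+1}$ inside $I_n(c_0)$ are the endpoints of $I_{n+2}(c_0)$ and of the intervals $\Delta_1^{j},\widehat{\Delta}_n^{j},\Delta_{a_{n+1}}^{-j}$; reading them off (and using $q_{n+2}=a_{n+1}q_{n+1}+q_n$ to re-express those of the $\Delta_{a_{n+1}}^{-j}$ in terms of $f^{q_{n+2}}(c_0)$), each equals $f^{k_1 q_{n+1}+\sigma_1 q_n}$ applied to one of $c_0, f^{q_{n+2}}(c_0),\mathfrak{c}_n$, with $|k_1|$ bounded by $r(n)$ and by $\ell(n)$. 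In every case this is the required decomposition with $L=1$ and $m_1=n$.

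\emph{Inductive step.} Assume the statement for $p-1$, every base level, and both critical points, and let $v\in\widehat{E}_{n+p}\cap J_n(c_0)$. If $v\in J_{n+1}(c_0)$, the inductive hypothesis at base level $n+1$ and depth $p-1$ gives the decomposition with $m_1\ge n+1\ge n$. Otherwise $v$ lies in the closure of an atom $A$ of $\widehat{\mathcal{P}}_{n+1}$ contained in $I_n(c_0)\setminus I_{n+2}(c_0)$, and by the construction $A=f^{k_1 q_{n+1}+\sigma_1 q_n}(B)$, where $B$ is one of the seeds $I_{n+1}(c_0),I_{n+2}(c_0),\widehat{\Delta}_n$ (the glued atoms $\Delta_R^{n+1},\Delta_L^{n+1}$ being unions of such), $\sigma_1\in\{0,1\}$, $f^{k_1 q_{n+1}+\sigma_1 q_n}$ is injective near $B$, and $|k_1|$ is bounded by $r(n),\ell(n)$ in the two-bridges case and by $\lceil a_{n+1}/2\rceil$ otherwise; moreover the construction's dynamical naturality gives $\widehat{\mathcal{P}}_{n+p}|_A = f^{k_1 q_{n+1}+\sigma_1 q_n}\big(\widehat{\mathcal{P}}_{n+p}|_B\big)$. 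Writing $v = f^{k_1 q_{n+1}+\sigma_1 q_n}(v')$ with $v'\in\widehat{E}_{n+p}\cap B$: when $B\subset J_{n+1}(c_0)$ the inductive hypothesis at $c_0$ yields $v'=\phi_2\circ\cdots\circ\phi_L(x)$ with $n+1\le m_2\le\cdots\le m_L$, and we set $\phi_1=f^{k_1 q_{n+1}+\sigma_1 q_n}$, $m_1=n$; when $B=\widehat{\Delta}_n$ we conjugate by $f^{j}$, reduce to an endpoint of the two-bridges partition at $c_1$ inside $I_{n+1}(c_1)$, apply the inductive hypothesis at $c_1$, and pull back by $f^{-j}$, using that $f$ commutes with itself (so $f^{-j}\circ f^{k q_{m+1}+\sigma q_m}\circ f^{j}=f^{k q_{m+1}+\sigma q_m}$ and only the base point moves, to $f^{-j}$ of a point of $\{c_1,f^{q_{m+2}}(c_1),\mathfrak{c}_m\}$, which one verifies lies again in $\{c_0,f^{q_{m'+2}}(c_0),\mathfrak{c}_{m'}\}$).

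\emph{Main difficulty.} The crux is the structural claim of the inductive step: that the atoms of $\widehat{\mathcal{P}}_{n+1}$ outside $J_{n+1}(c_0)$ are single controlled iterates of finitely many seeds and that all later refinements commute with those iterates (including the compatibility of the two-bridges constructions at $c_0$ and $c_1$ along the bridges). This requires meticulous combinatorial bookkeeping --- in Case $2$, identifying which atom of $\mathcal{P}_{n+1}$ (respectively $\widehat{\mathcal{P}}_{n+1}$) contains $v$ and tracking the endpoint relocations; in Case $1$, pinning down the position of $\mathfrak{c}_n$ among the $\Delta_j$ and checking that $r(n)$ and $\ell(n)$ genuinely cap the multiplicities, which for atoms $\widehat{\Delta}_n^{j}$ with large $|j|$ may force routing the argument through the $c_1$-picture so that $|k_j|$ stays within both $r(\cdot)$ and $\ell(\cdot)$. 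A last subtle point is to check that the transport along a bridge keeps the base point of condition (3) admissible and conditions (1)--(2) intact.
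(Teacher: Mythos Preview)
The paper does not prove this lemma: it is stated with the citation \cite[Lemma 4.6]{EG} and no argument follows. There is therefore no proof in the present paper to compare your proposal against; the result is imported wholesale from the earlier paper of Estevez and Guarino.

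That said, your strategy---induction on $p$, peeling off one controlled iterate $f^{k q_{n+1}+\sigma q_n}$ at each step to land in a seed interval, and running the induction simultaneously at $c_0$ and $c_1$ to handle the bridge atoms $\widehat{\Delta}_n$---is exactly the natural line of attack and is, in outline, what the proof in \cite{EG} does. Your own ``Main difficulty'' paragraph is honest about where the real work lies: you have not actually verified the dynamical naturality claim $\widehat{\mathcal{P}}_{n+p}|_A = f^{k_1 q_{n+1}+\sigma_1 q_n}(\widehat{\mathcal{P}}_{n+p}|_B)$, nor have you checked that routing through $c_1$ and back keeps the base point in the admissible set of condition~(3), nor that the simultaneous bound $|k_j|\le r(m_j)$ \emph{and} $|k_j|\le \ell(m_j)$ genuinely holds (note that the $\widehat{\Delta}_n^j$ range over $1-r(n)\le j\le \ell(n)-1$, so a single iterate count can exceed one of the two bounds, and your suggested fix of rerouting through $c_1$ needs to be carried out, not just named). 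As written this is a correct plan with the hard combinatorial verifications deferred rather than done; to turn it into a proof you would need to execute precisely the bookkeeping you flag, which is the substance of \cite[Lemma~4.6]{EG}.
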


\begin{remark}\label{rmk:comparabilitytwopartions} The relation between the length of interval is the classical dynamical partition and the two-bridges partition is the following: let $C>1$ and $\widehat{C}>1$ be the two constants given by real bounds of the classical and the two-bridges partition, respectively. If $I \in \mathcal{P}_n$ and $\widehat{I} \in \widehat{\mathcal{P}}_n$ and $I \cap \widehat{I}  \neq \emptyset$ then
	\[
	\frac{1}{\max\{C+1, \widehat{C}+1\}} \leq \frac{|I|}{|\widehat{I}|} \leq \max\{C+1, \widehat{C}+1\}.
	\]
\end{remark}	

\section{Idea of the proof}\label{sec:idea}

The proof of our Main Theorem consists in to construct a sequence of circle partitions that satisfy some specific properties. More precisely, we want to prove the following result:

\begin{proposition}\label{criterion} \cite[Proposition, Page 199]{KT} Let $h$ be a circle map and $\{\mathcal{Q}_n^f\}_{n \in \nt}$ and $\{\mathcal{Q}_n^g\}_{n \in \nt}$ be two sequences of circle partitions such that for all $I \in \mathcal{Q}_n^f$ we have $h(I) \in \mathcal{Q}_n^g$. Suppose also that:
\begin{enumerate}
	\item  $\{\mathcal{Q}_n^f\}_{n \in \nt}$ and $\{\mathcal{Q}_n^g\}_{n \in \nt}$ are sequence of refinement circle partitions;
	\item  the maximal length of an interval in $\mathcal{Q}_n^f$, and the maximal  length of an interval in $\mathcal{Q}_n^g$,  go to zero as $n$ goes to infinite;
	\item  there exist  $B> 0$ and $\mu \in (0, 1)$ such that for any two intervals $I, I' \in \mathcal{Q}_n^f$, which either are adjacent or $I, I' \subset J$ for some $J \in \mathcal{Q}_{n-1}^f$,  we have
	\begin{equation*} 
		\left|\log \frac{|h(I)|}{|I|} - \log \frac{|h(I')|}{|I'|}\right| \leq B\, \mu^{n};
	\end{equation*}
\end{enumerate}	
	then $h$ is a $C^{1}$ circle diffeomorphism.
\end{proposition}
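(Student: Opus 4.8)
The plan is to produce the derivative of $h$ as a uniform pointwise limit of length ratios over the partitions $\mathcal{Q}_n^f$, to check that this limit function is continuous and strictly positive, and finally to recognize $h$ as its integral. For $n\in\nt$ and an atom $I\in\mathcal{Q}_n^f$ set $r_n(I)=|h(I)|/|I|>0$, and for $x\in S^1$ let $I_n(x)$ denote the atom of $\mathcal{Q}_n^f$ containing $x$, choosing one fixed side when $x$ is a partition endpoint. The first step is the elementary \emph{averaging property}: if $J\in\mathcal{Q}_n^f$ has children $I_1,\dots,I_k\in\mathcal{Q}_{n+1}^f$ then, since $h$ is a homeomorphism and the partitions are nested, $|J|=\sum_s|I_s|$ and $|h(J)|=\sum_s|h(I_s)|$, so $r_n(J)=\sum_s\frac{|I_s|}{|J|}\,r_{n+1}(I_s)$ is a convex combination of the $r_{n+1}(I_s)$; hence $\log r_n(J)$ lies between $\min_s\log r_{n+1}(I_s)$ and $\max_s\log r_{n+1}(I_s)$. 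Applying hypothesis (3) with the common parent $J\in\mathcal{Q}_{(n+1)-1}^f$ shows these numbers differ by at most $B\mu^{n+1}$, whence
\[
\bigl|\log r_{n+1}(I)-\log r_n(J)\bigr|\le B\mu^{n+1}\qquad\text{for every child }I\text{ of }J .
\]
Taking $J=I_n(x)$ and $I=I_{n+1}(x)$, the sequence $\{\log r_n(I_n(x))\}_n$ is Cauchy uniformly in $x$, so we may define $\varphi(x)=\lim_n r_n(I_n(x))$; then $\varphi(x)>0$, $\bigl|\log r_n(I_n(x))-\log\varphi(x)\bigr|\le \tfrac{B}{1-\mu}\mu^{n+1}$, and at a partition endpoint the two admissible choices of $I_n(x)$ are adjacent atoms, so hypothesis (3) forces the two resulting limits to coincide; thus $\varphi$ is well defined on all of $S^1$.

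Next I would show $\varphi$ is continuous. If $x,y$ lie in the same atom of $\mathcal{Q}_n^f$ then $\log r_n(I_n(x))=\log r_n(I_n(y))$, and if they lie in adjacent atoms then hypothesis (3) bounds the difference of these two quantities by $B\mu^n$; combined with the previous estimate this gives $\bigl|\log\varphi(x)-\log\varphi(y)\bigr|\le B\mu^n+\tfrac{2B}{1-\mu}\mu^{n+1}$ whenever $x$ and $y$ lie in the same or adjacent atoms of $\mathcal{Q}_n^f$. Given $\varepsilon>0$, pick $n$ making the right-hand side less than $\varepsilon$ and let $\delta$ be the smallest length of an atom of the finite partition $\mathcal{Q}_n^f$; if $|x-y|<\delta$ then no whole atom can lie strictly between $x$ and $y$, so they lie in the same or adjacent atoms and $\bigl|\log\varphi(x)-\log\varphi(y)\bigr|<\varepsilon$. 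Hence $\varphi$ is continuous (using also hypothesis (2), this argument in fact yields an explicit modulus of continuity).

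Finally I would prove that $h(x)=h(x_0)+\int_{x_0}^{x}\varphi(t)\,dt$ for a fixed partition endpoint $x_0$, the arc being taken with orientation matching that of $h$ (which we may assume orientation preserving). First let $x$ be any partition endpoint, say $x_0$ is an endpoint of $\mathcal{Q}_{n_1}^f$ and $x$ of $\mathcal{Q}_{n_2}^f$; since the partitions are nested, for every $m\ge\max(n_1,n_2)$ both points are endpoints of $\mathcal{Q}_m^f$, so the arc $[x_0,x]$ is a finite union of atoms $I\in\mathcal{Q}_m^f$, and $h(x)-h(x_0)=|h([x_0,x])|=\sum_I|h(I)|=\sum_I r_m(I)\,|I|$. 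On the other hand $\int_{x_0}^x\varphi=\sum_I\int_I\varphi$, and since $\bigl|\log\varphi(t)-\log r_m(I)\bigr|\le\tfrac{B}{1-\mu}\mu^{m+1}$ for $t\in I$ we get $\bigl|\int_I\varphi-r_m(I)|I|\bigr|\le r_m(I)\,|I|\,\bigl(e^{B\mu^{m+1}/(1-\mu)}-1\bigr)$; summing over the finitely many atoms in $[x_0,x]$ yields $\bigl|h(x)-h(x_0)-\int_{x_0}^x\varphi\bigr|\le \bigl(e^{B\mu^{m+1}/(1-\mu)}-1\bigr)\,|h([x_0,x])|\to 0$ as $m\to\infty$. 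Thus the identity holds on the set of partition endpoints, which is dense by hypothesis (2); both sides being continuous in $x$ — the left side because $h$ is continuous, the right because $\varphi$ is — it holds for all $x$. Since $\varphi$ is continuous, the fundamental theorem of calculus gives $h\in C^1$ with $h'=\varphi$; as $h$ is an orientation-preserving circle homeomorphism with continuous, everywhere positive derivative, $h$ is a $C^1$ circle diffeomorphism.

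I expect the main obstacle to be the careful handling of partition endpoints in the last step — verifying that nestedness of finite partitions makes the endpoint sets increasing (so that "$[x_0,x]$ is a finite union of atoms of $\mathcal{Q}_m^f$" is legitimate for all large $m$), keeping track of orientation, and using finite additivity of length under the homeomorphism $h$; once that identity is established on a dense set of endpoints, density plus continuity of $\varphi$ and $h$ closes everything. The conceptual core, by contrast, is just the averaging observation of the first paragraph, which converts hypothesis (3) into a geometrically convergent telescoping estimate for $\log r_n(I_n(x))$.
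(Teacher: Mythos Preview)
The paper does not supply its own proof of this proposition: it is quoted verbatim from \cite[Proposition, Page 199]{KT} and used as a black box, so there is no argument in the paper to compare against. Your proof is correct and is essentially the standard argument (and, as far as one can tell, the one in \cite{KT}): the averaging observation that $r_n(J)$ is a convex combination of the $r_{n+1}(I_s)$ turns hypothesis~(3) into the telescoping bound $\bigl|\log r_{n+1}(I_{n+1}(x))-\log r_n(I_n(x))\bigr|\le B\mu^{n+1}$, which produces the continuous positive limit $\varphi$; the integral identification of $h$ with $\int\varphi$ on the dense set of partition endpoints, extended by continuity, then finishes the job. The only points worth tightening are cosmetic: make explicit that ``refining'' implies the endpoint sets are increasing (you do flag this), and note that a continuous circle map sending every atom of $\mathcal{Q}_n^f$ onto an atom of $\mathcal{Q}_n^g$ for all $n$ is automatically a homeomorphism, so the orientation-preserving reduction is legitimate.
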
 

We will prove that  the sequence of two-bridges partition $\{\mathcal{\widehat{P}}_n^f\}_{n \in \nt}$, and its corresponding sequence  $\{\mathcal{\widehat{P}}_n^g\}_{n \in \nt}$\footnote{If the conjugacy $h$ identifies critical points of $f$ with critical points of $g$, then $h$ also identifies the corresponding two-bridges partitions  $\mathcal{\widehat{P}}_n^f$ and $\mathcal{\widehat{P}}_n^g$, for all $n \in \nt$.} for $n \geq n_0$ where $n_0$ is given by Theorem \ref{thm:Mcontrolledmcm}, satisfy the three conditions of Proposition  \ref{criterion}. The only condition that is not a immediate consequence of the definition and properties of $\{\mathcal{\widehat{P}}_n^f\}_{n \in \nt}$ is the third one, the main technical result in this paper is that condition $(3)$ of Proposition \ref{criterion} holds.


 Let us add some words about how we will obtain condition  $(3)$:
\begin{enumerate}
	\item First, we fixed $m \in \nt$, and we the consider the  two fundamental domain of the renormalizations $J_m(c_0)$ and $J_m(c_1)$. Inspired by \cite{KT}, in  Section \ref{subsec:unbounded} we prove that there exists an uniform strip around $m$ where, for all $n$ in that strip, the distance between each  endpoint in $\widehat{E}_{n}$ (contained in $J_m(c_0)$ or in $J_m(c_1)$) and its image by $h$ is exponentially small, with rate $n$.  We get those estimates in Proposition \ref{pro_pasouno}.
	\item Second, using the step before and the shape of the renormalizations inside $J_m(c_0)$ and $J_m(c_1)$, we get inequality $(3)$ in  Proposition  \ref{criterion} 
	for intervals in $\mathcal{\widehat{P}}_{n}^f$ which are contained in $J_m(c_0) \cup J_m(c_1)$. Those estimates are proved in Proposition \ref{Pro_pasocuatro}.
	\item Finally, using Koebe Distortion Principle (Lemma \ref{KoebeDistortionPrinciple}) and the fact that we can take any pair of intervals of   $\mathcal{\widehat{P}}_{n}^f$  inside $J_m(c_0)$ or $J_m(c_1)$, by a diffeomorphism, we extend the estimates obtained in second step  to intervals outside $J_m(c_0) \cup J_m(c_1)$.
\end{enumerate}
To obtain the proof of step 1 (Section \ref{sec:key}), we need some background given in Section \ref{subsec:unbounded}. The proof of step 2 is given in Section \ref{sec:estimatesintervals}. The final step will be proved in Section \ref{sec:proofmaintheorem}. 

 
 \section{Un-bounded rotation number} \label{subsec:unbounded}
 
 From now on, for all $n \in \nt$, we will renormalize around $c_0$, so we write $I_n^f$, $I_{n+1}^f$, $J_n^f$, $\mathfrak{c}_{n}$, $s_{n}^f$ and $\mathcal{R}^{n}f$ instead of $I_n^f(c_0)$, $I_{n+1}^f(c_0)$, $J_n^f(c_0)$, $\mathfrak{c}_{n,0}^{f}$, $s_{n,0}^f$ and $\mathcal{R}_0^{n}f$ respectively. Since $h$ is the conjugacy that identifies corresponding critical points,  then  $\mathfrak{c}_{n}^g=h(\mathfrak{c}_{n})$. We notice that the construction, definitions and results given in this section also apply if we renormalize around  $c_1$. 
 
  We say that an irrational number  $\rho=[a_0,a_1, \dots, a_n, \dots] \in [0,1)$ is \textit{bounded type} if there exists  $A \in \nt$ such that $\sup_{k} a_{k} \leq A$. The set of bounded type rotation numbers has zero Lebesgue measure in $[0,1]$. As we mentioned in the introduction, we focus our attention on un-bounded type rotation numbers, i.e. $\rho=[a_0,a_1, \cdots, a_n, \cdots]$ such that there is no $A \in \nt$ such that $\sup_{k}a_{k}< A$. In particular, the fundamental interval $I_{k}$ is subdivided by an un-bounded number of sub-intervals of the classical dynamical partition $\mathcal{P}_{k+1}$, and hence the graph of $\mathcal{R}^{k}f$ is arbitrarily close to the identity.
   	
  For a given $n \in \nt$ and $L \in \nt$, we define the $L-$th \textit{tubular} of $\mathcal{R}^nf$ as the set:$$M_{n,L}^f= \{ z \in [0,1]: z-\mathcal{R}^nf(z)< 1/L \}.$$
    We observe that if $a_{n+1}>L$ then the set $M_{n,L}^f$ is not empty. 

  \begin{lemma} \label{lemma:tubularcoordinates}
  	Given $M>1$ such that $\mathcal{R}^nf$ is $M$-controlled, there exists $L=L(M) \in \nt$ such that:
  	\begin{enumerate}
  		\item  $r(n)\geq L$, and in this case there exists an unique point $p \in Int [(\mathcal{R}^nf)^{-1}(B_{n,f}(\mathfrak{c}_n)),1]$ such that $|\mathcal{R}^nf(p)-p| \leq |\mathcal{R}^nf(z)-z|$ for all $z \in [(\mathcal{R}^nf)^{-1}(B_{n,f}(\mathfrak{c}_n)),1]$, $D \mathcal{R}^nf(p)=1$ and $D^2\mathcal{R}^nf(p)<0$.
  		\item If $\ell(n)\geq L$ then there exists an unique point $q \in Int [0,\mathcal{R}^nf(B_{n,f}(\mathfrak{c}_n))]$ such that $|\mathcal{R}^nf(q)-q| \leq |\mathcal{R}^nf(z)-z|$ for all $z \in [0,\mathcal{R}^nf(B_{n,f}(\mathfrak{c}_n))]$, $D \mathcal{R}^nf(q)=1$ and $D^2\mathcal{R}^nf(q)<0$.
  	\end{enumerate}
  \end{lemma}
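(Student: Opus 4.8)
The statement is about the first-return (renormalization) map $\mathcal{R}^nf$ restricted to the fundamental-domain piece $[(\mathcal{R}^nf)^{-1}(B_{n,f}(\mathfrak{c}_n)),1]$ (resp.\ $[0,\mathcal{R}^nf(B_{n,f}(\mathfrak{c}_n))]$): on this interval $\mathcal{R}^nf$ is a genuine $C^3$ diffeomorphism (no critical point inside, since $\mathfrak{c}_n$ is the endpoint), it has strictly negative Schwarzian by Lemma \ref{lemma_schwarzianderivative} (hence its derivative $D\mathcal{R}^nf$ cannot have an interior local minimum, and therefore $\log D\mathcal{R}^nf$ is a strictly concave function on that interval), and when $r(n)$ (resp.\ $\ell(n)$) is large the map is $C^0$-close to the identity on a domain of definite size coming from $M$-control. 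The function $z\mapsto z-\mathcal{R}^nf(z)$ is then a $C^3$ function on a closed interval, so it attains a minimum; the work is to show the minimum is interior, unique, and a nondegenerate critical point with the stated sign of the second derivative. I would do parts (1) and (2) in parallel since they are symmetric; below I describe part (1).

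\textbf{Key steps.} First I would fix $M$ as in Theorem \ref{thm:Mcontrolledmcm} and record the $M$-control estimates we need: the interval $T_n:=[(\mathcal{R}^nf)^{-1}(B_{n,f}(\mathfrak{c}_n)),1]$ has length bounded below and above in terms of $M$, the distortion of $\mathcal{R}^nf$ on $T_n$ is bounded (items (6)--(8) of Definition \ref{def:Kcontrolledcommutingpair}), and $\|\mathcal{R}^nf\|_{C^3(T_n)}\le M$. Second, I would show that $\mathcal{R}^nf$ is not everywhere expanding and not everywhere contracting on $T_n$ \emph{when $r(n)$ is large}: since $\mathcal{R}^nf$ iterated $\sim r(n)$ times stays in a bounded region while $T_n$ itself has definite size, an averaging/telescoping argument (exactly as in \cite{KT}) forces $\inf_{T_n}\bigl(z-\mathcal{R}^nf(z)\bigr)$ to be $\le 1/L$ and in particular strictly less than the values at the two endpoints once $L=L(M)$ is chosen large enough — this is where the quantitative threshold $L(M)$ enters, and it is where one uses that $z-\mathcal{R}^nf(z)$ at the right endpoint $z=1$ equals $1-\mathcal{R}^nf(1)$, a quantity bounded below by $M$-control, while the minimum over $T_n$ must be tiny. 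Hence the minimum is attained at an interior point $p$, giving $1-D\mathcal{R}^nf(p)=D(z-\mathcal{R}^nf(z))(p)=0$, i.e.\ $D\mathcal{R}^nf(p)=1$, and $D^2\mathcal{R}^nf(p)\le 0$. Third, I would upgrade $D^2\mathcal{R}^nf(p)\le 0$ to $D^2\mathcal{R}^nf(p)<0$ and upgrade uniqueness: if $D^2\mathcal{R}^nf(p)=0$ at a minimum, then $\log D\mathcal{R}^nf$ has an interior critical point which is a minimum of $\log D\mathcal{R}^nf$ as well (since $D\mathcal{R}^nf(p)=1$ is itself a local min of $D\mathcal{R}^nf$), contradicting strict concavity of $\log D\mathcal{R}^nf$ on $T_n$, which is exactly the content of $S\mathcal{R}^nf<0$ (one has $(\log D\mathcal{R}^nf)'' = S\mathcal{R}^nf + \tfrac12\bigl((\log D\mathcal{R}^nf)'\bigr)^2 \cdot$ — more precisely $S\varphi = (\log D\varphi)'' - \tfrac12((\log D\varphi)')^2$, so at a point where $(\log D\varphi)'=0$ we get $(\log D\varphi)'' = S\varphi<0$). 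The same strict-concavity argument shows $\log D\mathcal{R}^nf$ is strictly concave, hence $D\mathcal{R}^nf - 1$ changes sign at most once going through any interior zero in the correct direction, which forces the critical point of $z-\mathcal{R}^nf(z)$ to be unique; combined with the sign computation it is the unique minimizer $p$, and $D^2\mathcal{R}^nf(p)<0$.

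\textbf{Main obstacle.} The routine parts are the $C^3$ bookkeeping and the Schwarzian identity manipulations; the genuine technical point is the second step — producing the \emph{explicit} threshold $L=L(M)$ and proving that once $r(n)\ge L$ the minimum of $z-\mathcal{R}^nf(z)$ over $T_n$ strictly beats both endpoint values, so that the minimizer is forced into the interior. This requires controlling how many iterates of $\mathcal{R}^nf$ fit inside $T_n$ before escaping (which is essentially $r(n)$ by the definition of $r(n)$ via the two-bridges construction) and converting ``many iterates in a bounded region'' into ``the average displacement per step is $\le 1/r(n)$'', then using $M$-control to ensure $1/r(n)$ is genuinely smaller than the endpoint displacements; I expect to borrow the mechanism of \cite[Section on tubular coordinates]{KT} essentially verbatim, adapting it to the bi-critical fundamental domain $T_n$ cut off at the free critical point $\mathfrak{c}_n$ rather than at the critical point of a uni-critical pair.
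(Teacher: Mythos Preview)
Your approach is essentially the same as the paper's: the paper simply observes that $S(\mathcal{R}^nf)<0$ on the two intervals in question (by Lemma~\ref{lemma_schwarzianderivative}) and then cites \cite[Lemma~6.1]{GMdM}, whose proof is precisely the ``interior minimum forced by many iterates in a bounded region, uniqueness and nondegeneracy from negative Schwarzian'' mechanism you outline.

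One slip to fix in your third step: negative Schwarzian does \emph{not} make $\log D\varphi$ strictly concave (the identity is $(\log D\varphi)'' = S\varphi + \tfrac12\bigl((\log D\varphi)'\bigr)^2$, which need not be negative), and your claim that ``$D\mathcal{R}^nf(p)=1$ is itself a local min of $D\mathcal{R}^nf$'' is backwards. The clean way is to use that $S\varphi<0$ makes $(D\varphi)^{-1/2}$ strictly convex; hence $D\varphi=1$ at most twice, and if $D^2\varphi(p)=0$ then $p$ is a critical point of the strictly convex function $(D\varphi)^{-1/2}$, so a strict local \emph{maximum} of $D\varphi$ --- then $D\varphi<1$ on both sides of $p$, so $z-\varphi(z)$ is strictly increasing through $p$, contradicting that $p$ is a local minimum. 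This gives both $D^2\varphi(p)<0$ and uniqueness in one stroke.
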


\begin{proof}
	Note that $S (\mathcal{R}^nf)<0$ in $[0,\mathcal{R}^nf(B_{n,f}(\mathfrak{c}_n))] \cup [(\mathcal{R}^nf)^{-1}(B_{n,f}(\mathfrak{c}_n)),1]$. The proof then follows from a small adaptation of the proof of  \cite[Lemma 6.1]{GMdM}.
\end{proof}

By Lemma \ref{lemma:tubularcoordinates} and \cite[Remark 6.2]{GMdM}

\begin{corollary}\label{coro:centro}
	Given $M>1$ there exists $L=L(M) \in \nt$ such that $M_{n,L}^f$ is a non empty set, which can be an open interval or two open intervals, that contains at most two points $z_{n}^f$ and $w_n^f$ which attains the minimum of $|z-\mathcal{R}^nf(z)|$ inside $M_{n,L}^f$. For those points we have  $D\mathcal{R}^nf(z_{n}^f)=1$, $D\mathcal{R}^nf(w_{n}^f)=1$, $D^2\mathcal{R}^nf(z_{n}^f)<-1/M$ and $D^2\mathcal{R}^nf(w_{n}^f)<-1/M$.
\end{corollary}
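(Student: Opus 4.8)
The plan is to deduce Corollary~\ref{coro:centro} directly from Lemma~\ref{lemma:tubularcoordinates} together with \cite[Remark 6.2]{GMdM}, so the proof is essentially an assembly of facts already in hand. First I would record that, since $\mathcal{R}^nf$ is $M$-controlled and $S(\mathcal{R}^nf)<0$ on the two arcs $[0,\mathcal{R}^nf(B_{n,f}(\mathfrak{c}_n))]$ and $[(\mathcal{R}^nf)^{-1}(B_{n,f}(\mathfrak{c}_n)),1]$, Lemma~\ref{lemma:tubularcoordinates} applies on each of these arcs (choosing $L=L(M)$ to be the maximum of the two constants produced there, and enlarging it if necessary so that $a_{n+1}>L$ forces the relevant combinatorial inequalities $r(n)\geq L$ and/or $\ell(n)\geq L$). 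Thus on each arc where the hypothesis holds there is a unique critical point of the displacement function $z\mapsto z-\mathcal{R}^nf(z)$, namely the point $p$ (resp.\ $q$), at which $D\mathcal{R}^nf=1$ and $D^2\mathcal{R}^nf<0$.

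Next I would identify $M_{n,L}^f=\{z\in[0,1]:z-\mathcal{R}^nf(z)<1/L\}$ and argue about its topology. Away from the free critical point $B_{n,f}(\mathfrak c_n)$, the displacement $z-\mathcal{R}^nf(z)$ is smooth; on the interval $[0,\mathcal{R}^nf(B_{n,f}(\mathfrak{c}_n))]$ and on $[(\mathcal{R}^nf)^{-1}(B_{n,f}(\mathfrak{c}_n)),1]$ it is by Lemma~\ref{lemma:tubularcoordinates} a strictly unimodal function (decreasing then increasing, since the unique interior critical point has $D^2\mathcal{R}^nf<0$ hence is a strict minimum of the displacement). Hence the super-level set $\{z-\mathcal{R}^nf(z)<1/L\}$ intersected with each such arc is a single open subinterval (or empty), so $M_{n,L}^f$ is either one or two disjoint open intervals, and it contains at most the two minimizing points $z_n^f$ (on the first arc) and $w_n^f$ (on the second); non-emptiness when $a_{n+1}>L$ comes from the fact, recalled in Section~\ref{subsec:unbounded}, that then the fundamental interval is subdivided into more than $L$ atoms of $\mathcal{P}_{n+1}$, so $\mathcal{R}^nf$ is $C^0$-close to the identity and some $z$ satisfies $z-\mathcal{R}^nf(z)<1/L$.

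Finally I would pin down the quantitative second-derivative bound. At $z_n^f$ (and likewise $w_n^f$) we already know $D\mathcal{R}^nf(z_n^f)=1$ and $D^2\mathcal{R}^nf(z_n^f)<0$; the strict estimate $D^2\mathcal{R}^nf(z_n^f)<-1/M$ is exactly the content of \cite[Remark 6.2]{GMdM}, which I would invoke: there, having $D\mathcal{R}^nf=1$ at an interior point of a negatively-Schwarzian $M$-controlled branch forces a definite amount of concavity, quantified in terms of the control constant $M$. Applying that remark on each of the two arcs gives $D^2\mathcal{R}^nf(z_n^f)<-1/M$ and $D^2\mathcal{R}^nf(w_n^f)<-1/M$, completing the proof.

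I do not anticipate a genuine obstacle here, since everything reduces to the two cited results; the only point requiring a little care is the bookkeeping of constants — choosing a single $L=L(M)$ valid for both arcs and compatible with the condition $a_{n+1}>L$ that guarantees $M_{n,L}^f\neq\emptyset$ — and the verification that the displacement function is genuinely unimodal on each arc, which is immediate from Lemma~\ref{lemma:tubularcoordinates} once one notes that the interior critical point is a strict local minimum of $z-\mathcal{R}^nf(z)$ and that there are no others.
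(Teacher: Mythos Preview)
Your proposal is correct and follows essentially the same approach as the paper, which simply states that the corollary follows from Lemma~\ref{lemma:tubularcoordinates} together with \cite[Remark 6.2]{GMdM}; you have merely unpacked in more detail the unimodality of the displacement function and the resulting topology of $M_{n,L}^f$. The only cosmetic slip is a swap of labels: in the paper $z_n^f$ corresponds to the point $p$ on the arc $[(\mathcal{R}^nf)^{-1}(B_{n,f}(\mathfrak{c}_n)),1]$ (the one associated with $r(n)\geq L$), not the other arc, but this does not affect the argument.
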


 If the points $z_{n}^f$ and $w_n^f$ in Corollary \ref{coro:centro} exist, then they are called the \textit{centers} of $M_{n,L}^f$.  The exponential convergence of the renormalizations, in the $C^2$-topology, implies that 
 \begin{equation}\label{inq:estimativacentrofunnel}
 	\left| z_{n}^g - z_{n}^f \right| = \left| h(z_{n}^f) - z_{n}^f \right| \leq C\, \lambda^n
 \end{equation}
and
\begin{equation}\label{inq:estimativacentrofunnel2}
	\left| w_{n}^g - w_{n}^f \right| = \left| h(w_{n}^f) - w_{n}^f \right| \leq C\, \lambda^n,
\end{equation}
where $h(z_{n}^f)$ and $h(w_{n}^f)$ are the centers of $M_{n,L}^g$. From now on we denote the centers $z_{n}^f$ and $w_n^f$  by $z_{n}$ and $w_n$, respectively. 
 
 
 \begin{figure}[!ht]
 	\includegraphics[width=3.8in]{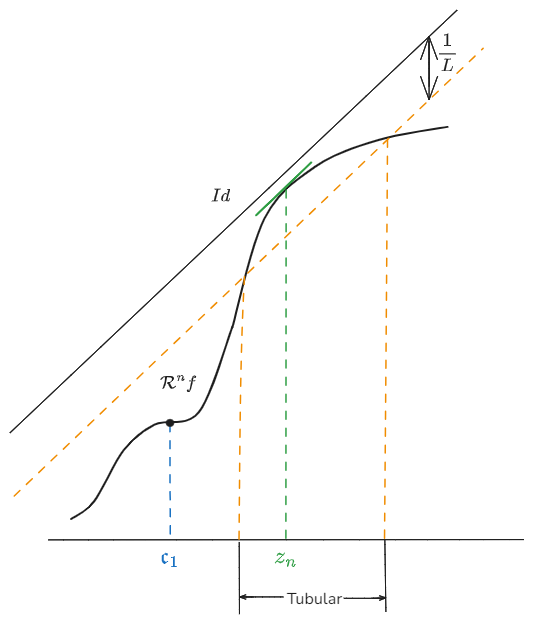}
 	\caption{\label{decomposition}  In this figure $r(n)>L$ and then the center $z_{n}$ of  $M_{n,L}^f$ is defined}
 \end{figure}

 \subsection{Tubular coordinates} 
 Let us fix $m \in \nt$ and $L=L(a_{m+1}) \in (0,1)$ such that $M_{m,L}^f$ is not empty and the center $z_{m}$ is defined. A similar construction can be done if $w_{m}$ is defined.
  
 \begin{definition} \label{def:tubularcoordinates}
  The tubular-coordinates of $\mathcal{R}^mf$ with respect to $z_{m}$, is defined as the change of coordinates of $\mathcal{R}^mf$ given by:  
 \begin{equation*}
 	 \mathcal{F}_m(x) = \phi_m \circ \mathcal{R}^mf \circ \phi_m^{-1}(x),
 \end{equation*}
where $\phi_m(x)= (D^2\mathcal{R}^mf)(z_{m})(x- z_{m})/2$, \ for  $x\in [ B_{m,f}(f^{-q_{m+1}}(\mathfrak{c}_m)), B_{m,f}(f^{q_m}(c_0)) ]$. 
\end{definition}

Analogously, we define the  tubular-coordinates of $\mathcal{R}^mg$ with respect to $h(z_{m})$, as the change of coordinates given by
 \begin{equation*}
	\mathcal{G}_m(x) = \psi_m \circ \mathcal{R}^mg \circ \psi_m^{-1}(x),
\end{equation*}
where $\psi_m(x)= (D^2\mathcal{R}^mg)(h(z_{m}))(x- h(z_{m}))/2$, \ for  $x\in [ B_{m,g}(g^{-q_{m+1}}(h(\mathfrak{c}_m))), B_{m,g}(g^{q_m}(h(c_0))) ]$.

Since  a change of coordinates does not affect the original exponential convergence (see e.g. \cite[Lemma 3.1]{EG}), then the exponential convergence  of $\mathcal{R}^mf$ and $\mathcal{R}^mg$ in the $C^1$-topology, implies that the tubular-coordinates of $\mathcal{F}_m$ and  $\mathcal{G}_m$ also converge exponentially in the same topology.

We are interested in obtaining asymptotic estimates of $\mathcal{F}_m$ and of $\mathcal{G}_m$, near the origin.
Since $\mathcal{F}_m'(0)=1$, $\mathcal{F}_m''(0)=2$ and  $\varepsilon_m = \mathcal{F}_m(0) = \min_x \{\mathcal{F}_m(x)-x\}<0$, then there exists  $\mathfrak{C}>0$ such that
 \begin{equation}\label{eq:Taylor} 
 	|\mathcal{F}_m(x) - (\varepsilon_m + x + x^2)| \leq \mathfrak{C} |x|^{2+\alpha} \text{, \ for $x \in \phi_m([0,1])$ }.
 \end{equation}

 Note that if $|x|^{2+\alpha}>C_0 \, |\varepsilon_m |$, for some $C_0>0$, then from \eqref{eq:Taylor} we get $|\mathcal{F}_m(x) - (x + x^2)| \leq \widetilde{C}_0 |x|^{2+\alpha}$, so we can dispose of the value  $|\varepsilon_m|$. For those points we use the following estimates (see \cite[Lemma 5]{KT}).
 
 \begin{lemma}[Estimates inside the Funnel]\label{lemaKTfunnel}
 	Let $\{s_i\}_{i \geq 0}$ be a sequence of real numbers such that there exist $\widehat{C_0}>0$  with $|s_{i+1} - (s_i - s_i^2 )| \leq \widehat{C_0} \,|s_i|^{2+\alpha}$ for every $i \geq 0$. Then there exist positive constants $D_1$, $D_2$ and $d_1 \in (0, 1)$ (all depending on $\widehat{C_0}~$ and $\alpha$) such that as long as $s_0 \in (0, d_1]$, then for all $i \geq 0$ we have 
 	\begin{equation}\label{eq_funnelsis0}
 		\left|s_i - \frac{1}{i+s_0^{-1}} \right| \leq  \frac{D_1}{(i+s_0^{-1})^{1+\alpha}} 
 	\end{equation}	
 	and
 	\begin{equation}\label{eq_funnelsonsecutive}
 		s_i - s_{i+1} =  \frac{1+\delta_i}{(i+s_0^{-1})^{2}}, \text{ \ for $|\delta_i| \leq D_2 s_0^{ \alpha}$}.
 	\end{equation}
 \end{lemma}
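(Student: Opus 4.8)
\textbf{Proof proposal for Lemma \ref{lemaKTfunnel} (Estimates inside the Funnel).}

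The plan is to treat the recursion $s_{i+1}=s_i-s_i^2+O(|s_i|^{2+\alpha})$ as a perturbation of the exact recursion $t_{i+1}=t_i-t_i^2$, whose solution is well approximated by $t_i\approx 1/(i+t_0^{-1})$. The cleanest device is to pass to the reciprocal variable $u_i=1/s_i$. First I would show, by induction on $i$, that if $s_0\in(0,d_1]$ with $d_1$ small enough then $s_i\in(0,d_1]$ for all $i$ and $s_i$ is strictly decreasing; this uses that the map $x\mapsto x-x^2+O(x^{2+\alpha})$ is positive and below the identity on a small enough right-neighbourhood of $0$, and that it maps $(0,d_1]$ into itself. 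Granted positivity and smallness, set $u_i=1/s_i$. Then a direct computation gives
\begin{equation*}
u_{i+1}=\frac{1}{s_i-s_i^2+O(s_i^{2+\alpha})}=\frac{1}{s_i}\cdot\frac{1}{1-s_i+O(s_i^{1+\alpha})}=u_i+1+O(s_i^{\alpha})=u_i+1+O(u_i^{-\alpha}).
\end{equation*}
Summing this telescoping relation from $0$ to $i-1$, and using that $u_j\geq u_0+j=s_0^{-1}+j$ to bound the error terms, yields
\begin{equation*}
\Bigl|u_i-(i+s_0^{-1})\Bigr|\leq \widehat{C_1}\sum_{j=0}^{i-1}(j+s_0^{-1})^{-\alpha}\leq \widehat{C_2}\,(i+s_0^{-1})^{1-\alpha},
\end{equation*}
the last step being a standard comparison of the sum with an integral (and absorbing the case of the first few terms into the constant). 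Inverting back, $s_i=1/u_i$, and writing $s_i-\tfrac{1}{i+s_0^{-1}}=\tfrac{(i+s_0^{-1})-u_i}{u_i(i+s_0^{-1})}$, the numerator is $O((i+s_0^{-1})^{1-\alpha})$ and the denominator is comparable to $(i+s_0^{-1})^2$ (since $u_i\sim i+s_0^{-1}$), which gives \eqref{eq_funnelsis0} with an appropriate $D_1$.

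For \eqref{eq_funnelsonsecutive}, I would use the estimate just obtained together with the recursion itself: $s_i-s_{i+1}=s_i^2+O(s_i^{2+\alpha})=s_i^2(1+O(s_i^{\alpha}))$. By \eqref{eq_funnelsis0} we have $s_i=\tfrac{1}{i+s_0^{-1}}(1+O((i+s_0^{-1})^{-\alpha}))$, so
\begin{equation*}
s_i^2=\frac{1}{(i+s_0^{-1})^{2}}\bigl(1+O((i+s_0^{-1})^{-\alpha})\bigr),
\end{equation*}
and since $i+s_0^{-1}\geq s_0^{-1}$ the error is $O(s_0^{\alpha})$; combining the two $O(s_0^\alpha)$ factors gives $s_i-s_{i+1}=\tfrac{1+\delta_i}{(i+s_0^{-1})^2}$ with $|\delta_i|\leq D_2 s_0^{\alpha}$, as claimed.

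The main obstacle is bookkeeping the constants so that the induction in the first step genuinely closes: one must choose $d_1$ small enough (depending only on $\widehat{C_0}$ and $\alpha$) that the perturbation $O(s_i^{2+\alpha})$ never overwhelms the quadratic term $-s_i^2$, so that $s_i$ stays positive, stays in $(0,d_1]$, and stays monotone — only then is the substitution $u_i=1/s_i$ legitimate and the geometric-series expansion of $1/(1-s_i+O(s_i^{1+\alpha}))$ valid uniformly in $i$. Once that invariant region is secured, everything else is the routine telescoping-and-integral-comparison argument sketched above; since the statement is quoted from \cite[Lemma 5]{KT}, I would carry out the $u_i$-computation in detail and refer to that reference for the elementary summation estimate.
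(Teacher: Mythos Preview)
The paper does not give its own proof of this lemma; it simply states that ``the proofs of Lemma~\ref{lemaKTfunnel} and Lemma~\ref{lemaKTtunnel} can be found in \cite[Appendix]{KT}.'' Your reciprocal-variable argument $u_i=1/s_i$, telescoping $u_{i+1}=u_i+1+O(u_i^{-\alpha})$, and integral comparison is exactly the standard approach and is the one carried out in \cite{KT}, so your proposal matches the cited source. One small point worth tightening: the inequality $u_j\geq u_0+j$ that you use to bound the sum is not quite immediate, since the $O(u_j^{-\alpha})$ error could be negative; but once $d_1$ is chosen so that this error has absolute value below $1/2$, you get $u_j\geq u_0+j/2$, which is all the summation estimate needs.
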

 
 On the other hand, if  $|x|^{2+\alpha} <C_0 \, \varepsilon_m$, for some $C_0>0$, then  for those points we can use the following estimates (see \cite[Lemma 6]{KT}).
 
  \begin{lemma}[Estimates inside the Tunnel]\label{lemaKTtunnel}
 	Let $\{s_i\}_{i \geq 0}$ be a sequence of real numbers such that there exist $\widetilde{C_1},\widetilde{C_2}> 0$ and $\varepsilon \in (0,1)$ with
 	\begin{enumerate}
 		\item  $|s_0| \leq \widetilde{C_1} \, \varepsilon$,
 		\item $|s_{i+1} - (\varepsilon + s_i + s_i^2 )| \leq \widetilde{C_2}|s_i|^{2+\alpha}$, for all $i \geq 0$.
 	\end{enumerate}
 	Fix arbitrary $\widetilde{C_3} > 0$ and define $N = N(\widetilde{C_3},\varepsilon) = \varepsilon^{-1/2} \, tan^{-1}(\widetilde{C_3} \varepsilon^{-\alpha/2(2+\alpha)})$. Then there exist positive constants $D_3, D_4$ and $d_2 \in (0,1)$ (all depending on $\widetilde{C_1}, \widetilde{C_2}, \widetilde{C_3}$ and $\alpha$) such that, as long as $\varepsilon \in (0, d_2]$, for every $0 \leq i \leq N$:
 	\begin{equation}\label{eq_tunnelsis0}
 		\left|s_i - \sqrt{\varepsilon} \tan\left(\sqrt{\varepsilon}i +
 		\tan^{-1}\left( \frac{s_0}{\sqrt{\varepsilon}}\right) \right) \right|  \leq D_3(\sqrt{\varepsilon} \tan \sqrt{\varepsilon}i)^{1+ \frac{\alpha(\alpha +1)}{2}}
 	\end{equation}
 	and
 	\begin{equation}\label{eq_tunnelconsecutive}
 		s_{i+1} - s_i = \frac{\varepsilon (1 + \delta_i)}{(\cos \sqrt{\varepsilon} i)^2} , \text{ \  \ \ where $|\delta_i| \leq D_4\varepsilon^{\frac{\alpha(\alpha+1)}{2(2+\alpha)}}$ .}
 	\end{equation}
 \end{lemma}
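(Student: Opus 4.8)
The plan is to read the recursion as an Euler discretisation of the planar vector field $\dot s=\varepsilon+s^2$, whose flow through $s_0$ at integer time $i$ is exactly
\[
\bar s_i:=\sqrt\varepsilon\,\tan\!\bigl(\sqrt\varepsilon\,i+\tan^{-1}(s_0/\sqrt\varepsilon)\bigr).
\]
Writing $\theta_i=\sqrt\varepsilon\,i+\tan^{-1}(s_0/\sqrt\varepsilon)$, so that $\varepsilon+\bar s_i^{\,2}=\varepsilon/\cos^2\theta_i$, the whole proof reduces to showing that the true orbit $(s_i)$ shadows $(\bar s_i)$ with the error claimed in \eqref{eq_tunnelsis0}; once that is done, \eqref{eq_tunnelconsecutive} follows by feeding the shadowing estimate back into the recursion. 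Throughout I use $\alpha\in(0,1)$.

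I first set up the comparison. Let $F(s)=\varepsilon+s+s^2$, so that $s_{i+1}=F(s_i)+\eta_i$ with $|\eta_i|\le\widetilde C_2|s_i|^{2+\alpha}$, and let $\Phi$ be the time-one map of $\dot s=\varepsilon+s^2$, so $\bar s_{i+1}=\Phi(\bar s_i)$. A Duhamel estimate over a unit time step gives $\Phi(s)=F(s)+\mu(s)$ with $|\mu(s)|\lesssim(\varepsilon+s^2)^{3/2}$, uniformly for $s$ in the range traversed up to time $N$ (where $\varepsilon+s^2$ stays small, so the flow moves $s$ only a little over the step). Consequently the error $e_i:=s_i-\bar s_i$, which starts at $e_0=0$, satisfies the affine recursion
\[
e_{i+1}=\bigl(1+2\bar s_i+O(|e_i|)\bigr)\,e_i+\eta_i-\mu(\bar s_i).
\]

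The core of the argument is a discrete Gronwall / variation-of-constants estimate on this recursion, run as a bootstrap: one assumes \eqref{eq_tunnelsis0} up to step $i$, uses it to control both the $O(|e_i|)$ term and the size of $|s_i|$ entering $|\eta_i|$, and reproves it at step $i+1$. The multiplier satisfies $\prod_{k=j+1}^{i-1}(1+2\bar s_k)\asymp\cos^2\theta_j/\cos^2\theta_i$, since $\sum_k\bar s_k\approx\varepsilon^{-1/2}\!\int\!\sqrt\varepsilon\tan$, while the forcing obeys $|\eta_j|+|\mu(\bar s_j)|\lesssim\varepsilon^{(2+\alpha)/2}/\cos^{2+\alpha}\theta_j+\varepsilon^{3/2}/\cos^3\theta_j$. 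Summing,
\[
|e_i|\ \lesssim\ \frac{1}{\cos^2\theta_i}\sum_{j<i}\Bigl(\frac{\varepsilon^{(2+\alpha)/2}}{\cos^{\alpha}\theta_j}+\frac{\varepsilon^{3/2}}{\cos\theta_j}\Bigr),
\]
and here the convergence of $\int_0^{\pi/2}\cos^{-\alpha}$ (the only place $\alpha<1$ is used) together with the merely logarithmic growth of $\int\cos^{-1}$ yields $|e_i|\lesssim\bigl(\varepsilon^{(1+\alpha)/2}+\varepsilon|\log\varepsilon|\bigr)/\cos^2\theta_i$. Since $\cos\theta_i\ge\cos\theta_N\asymp\varepsilon^{\alpha/(2(2+\alpha))}$ for $i\le N$ and $(1+\alpha)/2<1$, the $\varepsilon|\log\varepsilon|$ term is absorbed, and a check of exponents shows that $1+\tfrac{\alpha(\alpha+1)}{2}$ is precisely the number for which $\varepsilon^{(1+\alpha)/2}/\cos^2\theta_i\le D_3(\sqrt\varepsilon\tan\sqrt\varepsilon i)^{1+\alpha(\alpha+1)/2}$ holds up to $i=N$; this is \eqref{eq_tunnelsis0}.

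Finally, \eqref{eq_tunnelconsecutive} is immediate by substitution: from the recursion,
\[
s_{i+1}-s_i=\varepsilon+s_i^2+\eta_i=\frac{\varepsilon}{\cos^2\theta_i}+\bigl(2\bar s_ie_i+e_i^2+\eta_i\bigr),
\]
and dividing the parenthesised error by $\varepsilon/\cos^2\theta_i$ and inserting the bound on $|e_i|$ just obtained gives the factor $1+\delta_i$ with $|\delta_i|\le D_4\,\varepsilon^{\alpha(\alpha+1)/(2(2+\alpha))}$. I expect the real difficulty to be the third step — keeping all Gronwall constants uniform right up to the blow-up time $N$, where the multiplier $1/\cos^2\theta_i$ is unbounded, so that the bootstrap window stays open and the exponent balancing is exact rather than merely asymptotic. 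Since the hypotheses here coincide with those of \cite[Lemma 6]{KT}, one may alternatively quote that lemma verbatim.
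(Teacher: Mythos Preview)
The paper does not supply its own proof of this lemma: immediately after stating it, the paper writes ``The proofs of Lemma \ref{lemaKTfunnel} and Lemma \ref{lemaKTtunnel} can be found in \cite[Appendix]{KT}.'' Your closing sentence, invoking \cite[Lemma 6]{KT} verbatim, is therefore exactly the paper's approach, and nothing more is required.

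Your additional ODE-comparison sketch is the right strategy (and is indeed how \cite{KT} proceeds), but as written it has a gap in the small-$i$ regime. You bound the Gronwall sum uniformly in $i$ to obtain $|e_i|\lesssim\varepsilon^{(1+\alpha)/2}/\cos^2\theta_i$, and then assert that this implies \eqref{eq_tunnelsis0} for all $i\le N$. It does not: at $i=O(1)$ one has $\cos\theta_i\asymp 1$, so your bound reads $|e_i|\lesssim\varepsilon^{(1+\alpha)/2}$, whereas the right-hand side of \eqref{eq_tunnelsis0} is $\asymp(\varepsilon i)^{1+\alpha(\alpha+1)/2}$, and $(1+\alpha)/2<1+\alpha(\alpha+1)/2$ for every $\alpha>0$. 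The exponents match only at $i=N$, not uniformly below it. The culprit is twofold: the truncation estimate $|\mu(s)|\lesssim(\varepsilon+s^2)^{3/2}$ is too coarse near $s=0$ (the time-one map actually satisfies $\mu(s)=s(\varepsilon+s^2)+O((\varepsilon+s^2)^2)$, which is much smaller there), and the forcing bound $|\eta_j|\lesssim\varepsilon^{(2+\alpha)/2}/\cos^{2+\alpha}\theta_j$ discards the factor $\tan^{2+\alpha}\theta_j$ that makes $\eta_j$ tiny for small $j$. With these two refinements the small-$i$ contribution becomes $|e_i|\lesssim\varepsilon^2 i^2$, and the comparison $\varepsilon^2 i^2\lesssim(\varepsilon i)^{1+\alpha(\alpha+1)/2}$ now holds for all $\alpha\in(0,1)$. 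This is precisely the bookkeeping you flagged as ``the real difficulty'', and it is carried out in full in the appendix of \cite{KT}.
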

 
 The proofs of Lemma \ref{lemaKTfunnel} and Lemma \ref{lemaKTtunnel} can be found in \cite[Appendix]{KT}.
 
 Let us give some more known estimates. First, let $i_c, \widehat{i_c} \in \nt$  the smallest natural number such that $\mathcal{F}_m^{i_c}(\phi_m(1)) \in (0,-\varepsilon_m]$ and $\mathcal{F}_m^{-\widehat{i_c}}(\phi_m(B_{m,f}(\mathfrak{c}_m))) \in (0,-\varepsilon_m]$, respectively. Moreover,  let $i_{r}, i_{\ell} \in \nt$ be such that  $\mathcal{F}_m^{i_{r}}(\phi_m(1))$ is the most right endpoint of the Funnel and $\mathcal{F}_m^{i_{\ell}}(\phi_m(1))$ is the most left endpoint of the Funnel. Similarly, let  $\widehat{i_{r}}, \widehat{i_{\ell}} \in \nt$ such that  $\mathcal{F}_m^{-\widehat{i_{\ell}}}(\phi_m(B_{m,f}(\mathfrak{c}_m)))$ is the most left endpoint of the Funnel and $\mathcal{F}_m^{-\widehat{i_{r}}}(\phi_m(B_{m,f}(\mathfrak{c}_m)))$ is the most right endpoint of the Funnel.\\
 Analogously, let us define $\sigma_m = \mathcal{G}_m(0) = \min_x \{\mathcal{G}_m(x)-x\}<0$. Let $j_c, \widehat{j_c} \in \nt$ be the smallest natural numbers such that $\mathcal{G}_m^{j_c}(\psi_m(1)) \in (0,-\sigma_m]$ and $\mathcal{G}_m^{- \widehat{j_c}}(\psi_m(B_{m,f}(\mathfrak{c}_m))) \in (0,-\sigma_m]$, respectively. Moreover,  let $j_{r}, j_{\ell} \in \nt$ be such that  $\mathcal{G}_m^{j_{r}}(\psi_m(1))$ is the most right endpoint of the Funnel and $\mathcal{G}_m^{j_{\ell}}(\psi_m(1))$ is the most left endpoint of the Funnel. Similarly, let  $\widehat{j_{r}}, \widehat{j_{\ell}} \in \nt$ such that  $\mathcal{G}_m^{-\widehat{j_{\ell}}}(\psi_m(B_{m,g}(h(\mathfrak{c}_m))))$ is the most left endpoint of the Funnel and $\mathcal{G}_m^{-\widehat{j_{r}}}(\psi_m(B_{m,g}(h(\mathfrak{c}_m))))$ is the most right endpoint of the Funnel.\\
 
 Note that $i_{\kappa} \neq j_{\kappa}$ for $\kappa \in \{c,r,\ell\}$.  Next result follows from a small adaptation of \cite[Pages 206-207]{KT}.
 
 \begin{lemma}\label{lemma:estimatesparameterstubular} There exists a constant $D_5>0$ such that if $r(m)$ is sufficiently big then
 	\[
 	|2r(m)- \pi \varepsilon_m^{-1/2}| \leq D_5\,\varepsilon_m^{\frac{-1+\alpha}{2}}
 	 	\]
 	 		\[
 	 	\left| \frac{h(\varepsilon_m)}{\varepsilon_m} -1 \right| \leq D_5\,  \varepsilon_m^{\alpha/2}
 	 	\]
 	 	\[
 	 	|i_c - r(m)| \leq D_5\, \varepsilon_m^{\frac{-1+\alpha}{2}}
 	 	\]
 	 	\[
 	 	|\widehat{i_c} - r(m)| \leq D_5\,  \varepsilon_m^{\frac{-1+\alpha}{2}}
 	 	\]
 	 	\[
 	 	 | i_{\alpha} - j_{\alpha} |  \leq  D_5\, \varepsilon_m^{\frac{-1+\alpha}{2}}
 	 	\]
 	 	\[
 	 	| \widehat{i_{\alpha}} - \widehat{j_{\alpha}} |  \leq  D_5\, \varepsilon_m^{\frac{-1+\alpha}{2}},
 	 	\]
 	 	for $\alpha \in \{c,r,\ell\}$.
 \end{lemma}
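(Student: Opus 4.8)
The plan is to transfer the estimates of \cite[Pages 206--207]{KT} to our setting, where the only genuinely new input is the exponential convergence of the renormalizations in the $C^{2}$ topology (which controls the second derivatives at the centers $z_m$, $w_m$ and hence the quantities $\varepsilon_m$ and $\sigma_m$). First I would set $\varepsilon=-\varepsilon_m>0$ and observe that, by Corollary \ref{coro:centro} and the $M$-control of $\mathcal{R}^mf$, the tubular coordinate $\mathcal{F}_m$ satisfies the hypotheses of Lemma \ref{lemaKTfunnel} and Lemma \ref{lemaKTtunnel} with uniform constants: $\mathcal{F}_m'(0)=1$, $\mathcal{F}_m''(0)=2$, and the Taylor estimate \eqref{eq:Taylor}. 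Then $r(m)$ is, up to a bounded additive error, the number of iterates of $\mathcal{F}_m$ needed to cross the interval from $\phi_m(1)$ to $\phi_m\bigl((\mathcal{R}^mf)^{-1}(B_{m,f}(\mathfrak{c}_m))\bigr)$, which straddles the center.

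The estimate $|2r(m)-\pi\varepsilon_m^{-1/2}|\le D_5\,\varepsilon_m^{(-1+\alpha)/2}$ comes from splitting this orbit into its \emph{funnel} part (where $|x|^{2+\alpha}>C_0|\varepsilon_m|$, governed by \eqref{eq_funnelsis0}--\eqref{eq_funnelsonsecutive}, contributing $O(\varepsilon_m^{(-1+\alpha)/2})$ iterates to reach the tunnel mouth) and its \emph{tunnel} part (where $|x|^{2+\alpha}<C_0|\varepsilon_m|$, governed by the tangent formula \eqref{eq_tunnelsis0}): the number of tunnel iterates needed to go from one side of the tunnel to the other is $\varepsilon_m^{-1/2}\bigl(\tan^{-1}(\infty)-\tan^{-1}(-\infty)\bigr)=\pi\varepsilon_m^{-1/2}$ up to the error term in \eqref{eq_tunnelsis0}, which after the change of variables $i\mapsto 2r(m)$ (two half-crossings, symmetric about the center) yields exactly the claimed bound. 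The two-sided symmetry of the tunnel about $z_m$ — valid because $D^2\mathcal{R}^mf(z_m)<-1/M$ makes $0$ the minimum of $\mathcal{F}_m(x)-x$ — is what lets one identify both $|i_c-r(m)|$ and $|\widehat{i_c}-r(m)|$ with the same half-tunnel count, giving the third and fourth displayed inequalities. The comparison estimates $|i_\alpha-j_\alpha|\le D_5\varepsilon_m^{(-1+\alpha)/2}$ and $|\widehat{i_\alpha}-\widehat{j_\alpha}|\le D_5\varepsilon_m^{(-1+\alpha)/2}$ then follow by combining the bound on each of $i_\alpha,j_\alpha$ in terms of $\varepsilon_m^{-1/2}$ (resp.\ $\sigma_m^{-1/2}$) with $|h(\varepsilon_m)/\varepsilon_m-1|\le D_5\varepsilon_m^{\alpha/2}$; and this last ratio estimate is itself a direct consequence of the $C^2$-exponential convergence: $\varepsilon_m=\tfrac12 D^2\mathcal{R}^mf(z_m)\,\bigl(\text{something}\bigr)$ so that $\varepsilon_m$ and $\sigma_m=\mathcal{G}_m(0)$ agree up to $C\lambda^m$, which in turn is $\le \varepsilon_m^{1+\alpha/2}$ once $r(m)$ (and hence $\varepsilon_m^{-1/2}\sim 2r(m)/\pi$) is large — here one uses that a two-bridges level forces $a_{m+1}$ large, hence $r(m)$ large, hence $\varepsilon_m$ small, so the polynomially-small error dominates the exponentially-small one. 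I would state all constants as depending only on $M$, $\alpha$, and the constants $C,\lambda$ of \eqref{ineq:convergencerenormalizations}.

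The main obstacle I expect is the bookkeeping at the \emph{funnel--tunnel interface}: one must check that the $O(\varepsilon_m^{(-1+\alpha)/2})$ error absorbs (i) the bounded number of funnel iterates on each side, (ii) the mismatch between $\varepsilon_m$ and $\sigma_m$ propagated through the tangent formula \eqref{eq_tunnelsis0} (a derivative of $\tan$ near $\pm\pi/2$ is large, so one cannot be cavalier), and (iii) the discrepancy between the endpoints $\phi_m(1)$ and $\psi_m(1)$, and between $\phi_m(B_{m,f}(\mathfrak{c}_m))$ and $\psi_m(B_{m,g}(h(\mathfrak{c}_m)))$, under $h$ — this last point is exactly where \eqref{inq:estimativacentrofunnel}, \eqref{inq:estimativacentrofunnel2} and the $C^2$ convergence of $\mathcal{F}_m$ to $\mathcal{G}_m$ enter. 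Since the substance of each individual estimate is carried out in \cite[Pages 206--207]{KT}, I would present the argument as a careful adaptation, spelling out only the points where the bi-critical free critical point $\mathfrak{c}_m$ (which truncates the funnel on one side, replacing KT's endpoint) changes the computation, and where the $C^2$ (rather than $C^1$) hypothesis is genuinely used.
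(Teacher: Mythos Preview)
Your approach---adapt \cite[Pages 206--207]{KT} via the funnel/tunnel dichotomy in tubular coordinates---is exactly what the paper does (indeed the paper gives no proof beyond that citation), and your outline of the first, third, fourth, fifth and sixth inequalities is sound.

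There is, however, a genuine gap in your derivation of the second inequality $\left|\frac{h(\varepsilon_m)}{\varepsilon_m}-1\right|\le D_5\,\varepsilon_m^{\alpha/2}$. You claim it follows from the $C^2$-exponential convergence, arguing that $|\varepsilon_m-\sigma_m|\le C\lambda^m\le \varepsilon_m^{1+\alpha/2}$ ``once $r(m)$ is large''. But the lemma is stated at a \emph{fixed} level $m$ under the sole hypothesis that $r(m)$ is large; there is no a priori relation forcing $\lambda^m$ to be small compared to any power of $\varepsilon_m$. (Large $r(m)$ means large $a_{m+1}$, hence small $\varepsilon_m$, while $\lambda^m$ stays put.) The correct route, which is what \cite{KT} does, is purely combinatorial: $r(m)$ is determined by the continued-fraction data and is therefore the \emph{same} integer for $f$ and for $g$. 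Applying your first inequality twice---once to $\mathcal{F}_m$ and once to $\mathcal{G}_m$---gives
\[
\bigl|\pi\varepsilon_m^{-1/2}-\pi\sigma_m^{-1/2}\bigr|\le 2D_5\,\varepsilon_m^{(-1+\alpha)/2}+O\bigl(\sigma_m^{(-1+\alpha)/2}\bigr),
\]
from which $\bigl|\sigma_m/\varepsilon_m-1\bigr|\lesssim\varepsilon_m^{\alpha/2}$ follows by elementary algebra. No $C^2$ convergence is needed for this particular estimate; the $C^2$ hypothesis enters elsewhere (e.g.\ in \eqref{inq:estimativacentrofunnel} and in controlling $D^2\mathcal{R}^mf(z_m)$ versus $D^2\mathcal{R}^mg(h(z_m))$, as in Claim~\ref{claimI}).
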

We also have a similar result when $\ell(m)$ is sufficiently big.

\section{Key estimates for points in $\widehat{E}_{n+1}$}\label{sec:key}
 
For $m \in \nt$ as in the previous section  let $n \geq m$.  In this section we prove that every endpoint $z  \in \widehat{E}_{n+1} \subset J_m$ is exponentially close to its image  $h(z)$.

Let us observe that if we assume  exponential convergence of the renormalizations (in the $C^{1}$-topology) then by \cite[Remark 5.5]{EG} there exists $C>0$ such that  
\begin{equation}\label{eq:diferenciafreecriticalpoint}
	\left|B_{n,f}(\mathfrak{c}_{n}^f) - B_{n,g}(\mathfrak{c}_{n}^g) \right|
	\leq  C\,\lambda^{n/d}.
\end{equation}

We begin with the following result.
 

	
   	\begin{lemma}\label{lema_pasocero} There exist $C_1 > 1$ ( depending on real bounds and on $C$) and $\lambda_1=\lambda_1(\lambda, d) \in (0,1)$ such that for all $m<n+1$ we have:
   		\begin{equation*}
   		\max_{v  \in \widehat{E}_{n+1} \cap J_m} \left| B_{m,g} (h(v))- B_{m,f} (v)\right| \leq C_1 \left(\max_{v  \in \widehat{E}_{n+1} \cap J_{m+1}} \left| B_{m+1,g} (h(v))- B_{m+1,f} (v)\right| + \lambda_1^{m/2} \right).
   		\end{equation*}
   	\end{lemma}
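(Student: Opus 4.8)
The goal is a one-step recursive estimate relating the maximal discrepancy $\max_{v\in\widehat E_{n+1}\cap J_m}|B_{m,g}(h(v))-B_{m,f}(v)|$ at level $m$ to the corresponding quantity at level $m+1$, up to an exponentially small error term $\lambda_1^{m/2}$. The natural strategy is to take an arbitrary endpoint $v\in\widehat E_{n+1}\cap J_m$ and trace it down to level $m+1$ using the dynamical description of the two-bridges partition. By property (1) of $\{\widehat{\mathcal P}_n\}$ and Lemma \ref{lema:difniveles}, every such $v$ is obtained from a point $x$ that lies (essentially) among $\{c_0,f^{q_{m'+2}}(c_0),\mathfrak c_{m'}\}$ for some $m'\geq m+1$ by applying a composition $\phi_1\circ\cdots\circ\phi_L$ of iterates of first-return maps, where each $\phi_j=f^{k_jq_{m_j+1}+\sigma_jq_{m_j}}$ with $m_j\geq m+1$ and the exponents $k_j$ bounded by $r(m_j)$, $\ell(m_j)$ or $\lceil a_{m_j+1}/2\rceil$. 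The key point is that the \emph{first} application of a return map sends a point of $J_m$ into $J_{m+1}$ (or into $J_{m+1}\cup\widehat\Delta_{m}$ in the two-bridges case), so after stripping off one layer we are comparing points that lie in $J_{m+1}$, where the level-$(m+1)$ discrepancy bound applies.

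\textbf{Key steps.} First I would rescale everything by $B_{m,f}$ and $B_{m,g}$ so that the comparison takes place inside the fundamental domains of $\mathcal R^mf$ and $\mathcal R^mg$; under this rescaling, $B_{m,f}(v)$ is obtained by iterating the renormalization $\mathcal R^mf$ starting from a point that, after one renormalization step, corresponds to a point in the domain of $\mathcal R^{m+1}f$ rescaled by the affine change between the two fundamental domains. Second, I would split $|B_{m,g}(h(v))-B_{m,f}(v)|$ using the triangle inequality into (a) the error coming from the difference of the starting points in $J_{m+1}$ — this is exactly $|B_{m+1,g}(h(w))-B_{m+1,f}(w)|$ for the relevant endpoint $w\in\widehat E_{n+1}\cap J_{m+1}$, rescaled by the affine conjugating factor between levels $m$ and $m+1$, which is uniformly bounded above and below by real bounds (Theorem \ref{teobeau}, Corollary \ref{corbeau}), contributing the $C_1$ multiple of the level-$(m+1)$ term — and (b) the error accumulated by iterating $\mathcal R^mf$ versus $\mathcal R^mg$ the same (bounded in terms of $r(m),\ell(m),a_{m+1}$) number of times. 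For term (b) I would invoke the $C^2$-exponential convergence $d_2(\mathcal R^mf,\mathcal R^mg)\leq C\lambda^m$ together with the tubular-coordinate estimates of Section \ref{subsec:unbounded}: the number of iterates is comparable to $\varepsilon_m^{-1/2}$ (Lemma \ref{lemma:estimatesparameterstubular}), and the $C^1$-bound on the renormalizations controls how fast two nearby orbits separate, so the total accumulated error is bounded by a constant times $(r(m)+\ell(m))\cdot(C\lambda^m + |B_{m,f}(\mathfrak c_m^f)-B_{m,g}(\mathfrak c_m^g)|)$, and using \eqref{eq:diferenciafreecriticalpoint} plus the fact that $r(m),\ell(m)\leq a_{m+1}$ is at worst polynomial in $\varepsilon_m^{-1/2}$ while $\lambda^m$ is exponential, this whole contribution is absorbed into $\lambda_1^{m/2}$ for a suitable $\lambda_1\in(0,1)$. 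The appearance of the $1/d$ in \eqref{eq:diferenciafreecriticalpoint} and the polynomial loss from the number of iterations is why the clean rate $\lambda^m$ degrades to $\lambda_1^{m/2}$.

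\textbf{Main obstacle.} The delicate part is controlling term (b): we are iterating a near-identity map a number of times that \emph{grows} with $a_{m+1}$ (indeed, $r(m)$ and $\ell(m)$ can be of order $a_{m+1}$, hence unbounded), so a naive Gronwall-type estimate with expansion factor $1+O(\lambda^m)$ per step would give $(1+C\lambda^m)^{a_{m+1}}$, which is useless if $a_{m+1}$ is huge. The resolution — and this is where the tubular-coordinate machinery of Section \ref{subsec:unbounded} is essential — is that inside the funnel/tunnel the dynamics of $\mathcal R^mf$ and $\mathcal R^mg$ are \emph{both} well-approximated by the model parabolic maps $s\mapsto s+s^2+\varepsilon$, and Lemmas \ref{lemaKTfunnel}, \ref{lemaKTtunnel} give explicit, non-exponentially-growing control on the orbits; combined with the exponential closeness $|\varepsilon_m - h(\varepsilon_m)|\leq D_5\varepsilon_m^{1+\alpha/2}$ from Lemma \ref{lemma:estimatesparameterstubular} and $|z_m-h(z_m)|,|w_m-h(w_m)|\leq C\lambda^m$ from \eqref{inq:estimativacentrofunnel}–\eqref{inq:estimativacentrofunnel2}, the two orbits stay exponentially close throughout their entire (long) traversal of the tubular neighborhood, even without any bound on $a_{m+1}$. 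Once this uniform control is in hand the triangle-inequality bookkeeping is routine, and optimizing the exponent gives the stated $\lambda_1^{m/2}$.
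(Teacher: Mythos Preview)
Your outline correctly identifies the inductive structure and the main obstacle (unbounded $a_{m+1}$), but your proposed resolution of that obstacle differs from the paper's and, as written, has a gap.

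The paper does \emph{not} track orbits through the entire tubular. Instead it fixes $P=\lceil\lambda^{-m/2}\rceil$ and splits the endpoints of $\widehat E_{m+1}\cap[f^{-q_{m+1}}(\mathfrak c_m),f^{q_m}(c_0)]$ into two groups. For the first $L+P$ forward iterates of $f^{q_m}(c_0)$ (and backward iterates of $\mathfrak c_m$), it uses that on the approaching side of the center $z_m$ one has $D\mathcal R^mf<1$, so the per-step error $C\lambda^m$ accumulates additively to at most $P\cdot C\lambda^m\asymp\lambda^{m/2}$. For \emph{all remaining} endpoints, it proves (Claim \ref{claimP}) that $|B_{m,f}(v)-z_m|\le C_*\lambda_*^{m/2}$: the key is that $r(m)>2(L+P)$ forces $|\varepsilon_m|<M/(4P)\asymp\lambda^{m/2}$, and then the funnel/tunnel lemmas bound $|\phi_m(B_{m,f}(v))|$ by a power of $|\varepsilon_m|$. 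Since $|z_m-h(z_m)|\le C\lambda^m$ by \eqref{inq:estimativacentrofunnel}, the triangle inequality through the two centers finishes the job with no orbit-tracking whatsoever.

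Your plan, by contrast, is to follow the two orbits through the full tubular and argue they stay close using Lemmas \ref{lemaKTfunnel}--\ref{lemma:estimatesparameterstubular}. The problem is that those lemmas give control in terms of $\varepsilon_m$, not in terms of $m$; and $\varepsilon_m$ depends only on $a_{m+1}$, which bears no relation to $\lambda^m$. In particular your sentence ``$r(m),\ell(m)\le a_{m+1}$ is at worst polynomial in $\varepsilon_m^{-1/2}$ while $\lambda^m$ is exponential'' does not yield smallness of $(r(m)+\ell(m))\cdot\lambda^m$: for any $m$ one can have $a_{m+1}$ so large that this product is huge. The missing idea is precisely the dichotomy the paper exploits via the choice of $P$: either the tubular is short enough that naive accumulation over $\le P$ steps costs $\lambda^{m/2}$, or it is long enough that $\varepsilon_m\lesssim\lambda^{m/2}$ and every deep point is already $\lambda_*^{m/2}$-close to the center. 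Without this, your term (b) is not bounded by $\lambda_1^{m/2}$.
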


   	\begin{proof}[Proof of Lemma \ref{lema_pasocero}]
   	Let $m \leq n$, then by the relation $B_{m,f}(t)= -B_{m+1,f}(t) \mathcal{R}^{m}f(0)$, for any $t \in J_{m+1}$, real bounds and the exponential converge of the renormalizations,  we obtain for any $v  \in \widehat{E}_{m+1} \subset J_{m+1}$ (in particular for $v=f^{q_{m+2}}(c_0)$) 
   	\begin{align*}\label{ine:estimatesextremesqn}
   		\left|B_{m,f}(v) - B_{m,g}(h(v)) \right| &=
   		\left|-B_{m+1,f}(v) \mathcal{R}^{m}f(0) + B_{m+1,g}(h(v)) \mathcal{R}^{m}g(0) \right|\\
   		&\leq \left|\mathcal{R}^{m}f(0) (B_{m+1,g}(h(v)) - B_{m+1,f}(v)) \right| + | B_{m+1,g}(h(v))| \left|\mathcal{R}^{m}g(0)-\mathcal{R}^{m}f(0) \right| \\   			
   		&\leq  C \, \left|B_{m+1,f}(v)- B_{m+1,g}(h(v))\right| + C\, \lambda^m  \\
   		&\leq C \left(  \max_{z \in \widehat{E}_{n+1} \cap J_{m+1}} \left|B_{m+1,g}(h(v)) - B_{m+1,f}(v)\right| + \lambda^m  \right).
   	\end{align*}
So we just need to prove the result for $v  \in \widehat{E}_{n+1} \cap (J_m \setminus J_{m+1})$ and $m< n+1$. Let us first prove for $v  \in \widehat{E}_{m+1} \cap (J_m \setminus J_{m+1})$. If $v=f^{q_m}(c_0)$  then $v$ satisfies the lemma since
	\begin{align*}
		|B_{m,f}(v) - B_{m,g}(h(v)) |
		&= \left| \frac{-B_{m-1,f}(v) }{\mathcal{R}^{m-1}f(0)} + \frac{B_{m-1,g}(h(v))}{ \mathcal{R}^{m-1}g(0)}  \right|\\
		&\leq \frac{|B_{m-1,g}(h(v)) - B_{m-1,f}(v) |}{|\mathcal{R}^{m-1}f(0)|} +|B_{m-1,g}(h(v))| \left| \frac{1}{\mathcal{R}^{m-1}g(0)} - \frac{1}{\mathcal{R}^{m-1}f(0)}\right|\\
		&\leq \frac{|I_m^f(c_0)|}{|I_{m+1}^f(c_0)|} \left|
		\frac{|I_m^g(c_0)|}{|I_{m-1}^g(c_0)|} - 	\frac{|I_m^f(c_0)|}{|I_{m-1}^f(c_0)|} \right| + 	\frac{|I_m^g(c_0)|}{|I_{m-1}^g(c_0)|} 
		\left| 	\frac{|I_{m-1}^f(c_0)|}{|I_{m}^f(c_0)|} - 	\frac{|I_{m-1}^g(c_0)|}{|I_{m}^g(c_0)|} \right| \\
		&\leq C^2 \, \lambda^{m-1}+ C^4 \, \lambda^{m-1}\\
		&\leq 2C^4 \lambda \left( \lambda^m + \max_{v \in \widehat{E}_{n+1} \cap J_m} |B_{m+1,g}(h(v)) - B_{m+1,f}(v)| \right).
\end{align*}
   Moreover, Inequality \eqref{eq:diferenciafreecriticalpoint} implies that the point $\mathfrak{c}_{m} \in \widehat{E}_{n+1}$ also satisfies the lemma for $\lambda_1=\max\{\lambda, \lambda^{1/d}\}$.
   Let $M>0$ given by Theorem \ref{thm:Mcontrolledmcm} and let $L=L(M)>0$ given by Lemma \ref{lemma:tubularcoordinates}. First, let us consider the endpoints  in $\widehat{E}_{m+1} \cap [f^{-q_{m+1}}(\mathfrak{c}_{m}), f^{q_m}(c_0)]$. Let us take $L$ forward iterates of the $\mathcal{R}^mf$ at $B_{m,f}(f^{q_m}(c_0))$ and $L$ backward iterates of $\mathcal{R}^mf$ at $B_{m,f}(f^{-q_{m+1}}(\mathfrak{c}_{m}))$. If $r(m) \leq 2L$, then we cover all the endpoints and for them  we obtain the lemma  (for $C_1=C^{L}>1$ and $\lambda_1=\max\{\lambda, \lambda^{1/d}\}$).
    Now,  if $r(m) > 2L$, then we enter to the tubular $M_{m,2L}^f$ precisely at  $B_{m,f}(f^{(L+1)q_{m+1}+q_m}(c_0))$ and at $B_{m,f}(f^{-(L+2)q_{m+1}}(\mathfrak{c}_{m}))$, and the center $z_m$ is defined. 
   Then we take $P=\lceil \lambda^{-m/2} \rceil$ forward iterates of  $\mathcal{R}^mf$ at $B_{m,f}(f^{(L+1)q_{m+1}+q_m}(c_0))$ and $P$ backward iterates of $\mathcal{R}^mf$ at $B_{m,f}(f^{-(L+2)q_{m+1}}(\mathfrak{c}_{m}))$.\\
    If $z_m \notin B_{m,f}([f^{-(L+1)q_{m+1}}(\mathfrak{c}_m), f^{-(L+1+P)q_{m+1}}(\mathfrak{c}_m)]) \cap  B_{m,f}([f^{q_m+(L+P)q_{m+1}}(c_0), f^{q_m+Lq_{m+1}}(c_0)])$, then since $D\mathcal{R}^mf(x)<1$ in $B_{m,f}([f^{q_m+(L+P)q_{m+1}}(c_0), f^{q_m+Lq_{m+1}}(c_0)])$ we have, for $1 \leq j \leq P$ and $\lambda_1=\max\{\lambda, \lambda^{1/d}\}$
   {\small
   \begin{align*}
   	|B_{m,f}(f^{(L+j)q_{m+1}+q_m}(c_0)) - B_{m,g}(g^{(L+j)q_{m+1}+q_m}(c_0)) |
   	  	&\leq P\,C_1\, \left( \lambda_1^{m/2} + \max_{v \in \widehat{E}_{n+1} \cap J_m} |B_{m+1,g}(h(v)) - B_{m+1,f}(v)| \right),
   \end{align*}
}
and since $D\mathcal{R}^mf(x)>1$ in $B_{m,f}([f^{-(L+1)q_{m+1}}(\mathfrak{c}_m), f^{-(L+1+P)q_{m+1}}(\mathfrak{c}_m)])$ then, for $1 \leq j \leq P$, we have
\begin{multline*}
	|B_{m,f}(f^{-(L+1+j)q_{m+1}}(\mathfrak{c}_m)) - B_{m,g}(g^{-(L+1+j)q_{m+1}}(h(\mathfrak{c}_m))) |
	\\ \leq P\,C_1\, \left( \lambda_1^{m/2} + \max_{v \in \widehat{E}_{n+1} \cap J_m} |B_{m+1,g}(h(v)) - B_{m+1,f}(v)| \right).
\end{multline*}
 For the rest of endpoints $v \in \widehat{E}_{m+1}$ we have the following claim. Note that  if we reach or pass the center $z_m$ at some previous iterate (less than $P$) then we stop at that iterate and we also have the claim for the rest of endpoints.
 
 \begin{claim}\label{claimP}
 	Let  $D_1$ as in Lemma \ref{lemaKTfunnel} and $D_4$ as in Lemma \ref{lemaKTtunnel}. There exists a constant $C_*=C_*(C_0,D_1,D_4,M)>1$ and $\lambda_* =\lambda_*(\lambda, \alpha) \in (0,1)$ such that
 	\[
 		|B_{m,f}(v)-z_{m}|< C_*\, \lambda_*^{m/2}.
 	\]
 \end{claim}
  
\begin{proof}[Proof of Claim \ref{claimP}]
Let $\mathcal{F}_m$ be the tubular-coordinates  of $\mathcal{R}^mf$ with respect to $z_{m}$ (see Definition \ref{def:tubularcoordinates}). We want to  estimate $|\phi_m(B_{m,f}(v))|$ for $v \in  [ B_{m,f}(f^{-(L+P+1)q_{m+1}}(\mathfrak{c}_m)), B_{m,f}(f^{(L+P)q_{m+1}+q_m}(c_0))]$. 
Since $r(m) \, (z_{m}-\mathcal{R}^mf(z_{m})) <1$ then by Theorem \ref{thm:Mcontrolledmcm} we have
\begin{equation}\label{eq:varepsiloneM}
0< |\varepsilon_m| = |\mathcal{F}_m(0)|
< \frac{|D^2\mathcal{R}^mf(z_{m})|}{2\, r(m)} <  \frac{M}{4(L+P)} < \frac{M}{4P} < \frac{M}{4} \, \lambda^{m/2}.
\end{equation} 
If $\phi_m(B_{m,f}(v))$ is inside the Tunnel then we already have the claim for that point since
\begin{align*}
	|\phi_m(B_{m,f}(v))| \leq C_0 \, |\varepsilon_m|^{\frac{1}{2+\alpha}} 
	\leq C_0\,\left(\frac{ M}{4}\right)^{\frac{1}{2+\alpha}} \, \left(\lambda^{\frac{1}{2+\alpha}}\right)^{m/2},
\end{align*}
and hence
\begin{equation*}
	|B_{m,f}(v)-z_m| \leq  \frac{2C_0}{M} \left(\frac{ M}{4}\right)^{\frac{1}{2+\alpha}} \, \left(\lambda^{\frac{1}{2+\alpha}}\right)^{m/2}.
\end{equation*} 
Otherwise, if $\phi_m(B_{m,f}(v))$ is inside the Funnel, then 
$B_{m,f}(v)$ corresponds to some iterate $\mathcal{F}_{m}^i(\phi(1))$ or $\mathcal{F}_m^{-i}(\phi(B_{m,f}(\mathfrak{c}_m)))$ for some $i$.
In the first case, using inequalities \eqref{eq:varepsiloneM} and  \eqref{eq_funnelsis0}  for the map $\mathcal{F}_m$, with $s_0=\mathcal{F}_m^{i_c}(\phi(1))$ and $s_k=\mathcal{F}_m^{i_c+k}(\phi(1))$,  we get, for all $k \geq 0$
\begin{align*}
	|\mathcal{F}_m^{i_{\ell}+k}(\phi(1))| =|s_{i_{\ell}-i_{c}+k}|  &\leq
	\left|s_{i_{\ell}-i_{c}+k}- \frac{1}{i_{\ell}-i_{c}+k+ s_0^{-1} }\right| + \left|\frac{1}{i_{\ell}-i_{c}+k+ s_0^{-1}} \right|\\ 
	&\leq D_1 \left(\frac{s_0}{s_0(i_{\ell}-i_{c}+k)+1}\right)^{2} + \frac{s_0}{s_0(i_{\ell}-i_{c}+k)+1} \\
	&\leq (D_1+1)\,s_0\\	
	&\leq   (D_1+1)\, |\varepsilon_m| \\
	&\leq \frac{M\,(D_1+1)}{4} \, \lambda^{m/2}.
\end{align*}
Since $\left |\frac{1}{2} (D^2\mathcal{R}^mf(z_{m}) \, [(\mathcal{R}^ m_f)^{j}(B_{m,f}(f^{q_m}(c_0)))- z_{m}] \right|=|\mathcal{F}_m^{j}(\phi(1))|$ then
\[
\left|B_{m,f}(f^{(i_{\ell}+k)q_{m+1}+ q_m}(c_0))- z_{m} \right| = \left|(\mathcal{R}^m_f)^{(i_{\ell} +k)}(B_{m,f}(f^{q_m}(c_0)))- z_{m}\right| 
\leq \frac{M^2\,(D_1+1)}{2} \, \lambda^{m/2}.
\]
We obtain the claim for all those points above choosing $C_*=\max \{C_0M^{-1/2}, M(D_1+1)/2\}>1$ and $\lambda_*=\max\{\lambda, \lambda^{1/2+\alpha} \}$. Since any endpoint $v \in \widehat{E}_{m+1}$ that can be written as $f^{-kq_{m+1}}(\mathfrak{c}_m)$, for $1 \leq k \leq r(m)$, is between consecutive points of the form $f^{(i_{\ell}+t)q_{m+1}+ q_m}(c_0)$,  then we have for all $k \geq 1$
\begin{align*}
	\left|B_{m,f}(f^{-kq_{m+1}}(\mathfrak{c}_m))- z_{m} \right| \leq C_* \lambda_*^{m/2},
\end{align*} 
and we get the claim for any endpoint $v \in \widehat{E}_{m+1}$ such that $\phi_m(B_{m,f}(v))$ is inside the left part of the Funnel. Moreover, since any endpoint $v \in \widehat{E}_{m+1}$ which is inside the right part of the Funnel is between consecutive points of the form $f^{kq_{m+1}+ q_m}(c_0)$ or $f^{-kq_{m+1}}(\mathfrak{c}_m)$, then we get the claim also for all these points.
\end{proof}

   By Claim \ref{claimP} and Inequality \eqref{inq:estimativacentrofunnel}, any endpoint $v \in \widehat{E}_{m+1} \cap I_m$ such that $\phi(B_{m,f}(v))$ is inside the Funnel or the Tunnel of $M_{m,2L}^f$, satisfies
 \begin{align*}
 	| B_{m,f}(v) - B_{m,g}(h(v))|  	&\leq |z_{m} - B_{m,f}(v)| +|z_{m} - h(z_{m})| + |h(z_{m}) - B_{m,g}(h(v)) | \\
 	&\leq  2C_*\, \lambda_*^{m/2} + C\lambda^{m} \\
 	&\leq C_1\, \left( \lambda_*^{m/2} + \max_{v \in \widehat{E}_{m+1} \cap J_m} |B_{m+1,g}(h(v)) - B_{m+1,f}(v)| \right).
 \end{align*}
Now, for the endpoints $v \in \widehat{E}_{m+2}$, we observe the following. If $v \in \widehat{E}_{m+2} \cap I_{m+1}$ then we can repeat the proof that we made above. If  $v \in \widehat{E}_{m+2} \cap f^{q_{m}}(I_{m+1})$ then we also obtain the lemma since the maps $f^{q_m}$ and $g^{q_m}$ are exponentially close inside $I_{m+1}$. If $v \in \widehat{E}_{m+2} \cap [\mathfrak{c}_m, f^{-q_{m+1}}(\mathfrak{c}_m)]$, then since the maps  $f^{q_{m+2}}$ and $g^{q_{m+2}}$ inside $ [\mathfrak{c}_m, f^{-q_{m+1}}(\mathfrak{c}_m)]$ are exponentially close, we can repeat the proof above inside that interval replacing $m+1$ by $m+2$. The proof for the others endpoints follows from the  fact that the two maps $f^{q_{m+1}}$ and $g^{q_{m+1}}$ are  exponentially close and that any of the resting endpoints is  image by iterates of $f^{q_{m+1}}$ (that we have controlled in the proof above) at an endpoint in $\widehat{E}_{m+2}$ that already satisfies the lemma.
Consequently, we obtain Lemma \ref{lema_pasocero}, with $C_1= 2\max\{2C_*, C\}>1$ and $\lambda_1= \max\{ \lambda_*, \lambda^{1/d}\} \in (0,1)$, for any endpoint in $\widehat{E}_{n+1} \cap J_m$. 
\end{proof}

\begin{remark}\label{remark:C1grande}
	Let us observe that in the previous lemma we can take $C _1$ such that $C_1 \lambda_1>2$.
\end{remark}

\begin{proposition}\label{pro_pasouno} Let $\lambda_1\in (0,1)$ and $C_1>1$  as in Lemma \ref{lema_pasocero}. For any $\lambda_2 \in (\lambda_1, 1)$ there exist $b_1=b_1(C,C_1, \lambda_2) \in (0, 1)$ and $C_2=C_2(C,C_1, \lambda_1) > 1$
	such that
	\begin{equation*}
		| B_{m,g} (h(v))- B_{m,f} (v)| \leq C_2\, \lambda_2^n
	\end{equation*}
	for  $(1-b_1)n \leq m \leq n+1$ and $v \in \widehat{E}_{n+1} \cap J_{m}$.
\end{proposition}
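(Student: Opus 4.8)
The plan is to iterate Lemma \ref{lema_pasocero} starting from level $n+1$ and descending to level $m$, using that a geometric decay $\lambda_1^{m/2}$ is being fed in at each level while the error is amplified by at most a fixed constant $C_1$. First I would set, for each $k$ with $m \le k \le n+1$,
\[
A_k = \max_{v \in \widehat{E}_{n+1} \cap J_k} \bigl| B_{k,g}(h(v)) - B_{k,f}(v) \bigr|.
\]
Lemma \ref{lema_pasocero} reads $A_k \le C_1 \bigl(A_{k+1} + \lambda_1^{k/2}\bigr)$ for all $k < n+1$. The base of the recursion is $A_{n+1}$, which is $0$ (or trivially small) because at level $n+1$ every endpoint of $\widehat{E}_{n+1}$ is itself a normalized point of $B_{n+1,\cdot}$, so $B_{n+1,f}(v)$ and $B_{n+1,g}(h(v))$ coincide up to the normalization discrepancy, which is controlled by \eqref{eq:limitfundintervals} / \eqref{ineq:convergencerenormalizations}. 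Unrolling the recursion from $n+1$ down to $m$ gives
\[
A_m \le C_1^{\,n+1-m} A_{n+1} + \sum_{k=m}^{n} C_1^{\,k-m+1}\,\lambda_1^{k/2}.
\]

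Next I would estimate the right-hand side. The geometric-type sum $\sum_{k=m}^{n} C_1^{\,k-m+1}\lambda_1^{k/2}$ is dominated by its last term (up to a constant depending on $C_1$ and $\lambda_1$, since $C_1 \lambda_1^{1/2}$ can be assumed $>1$ by Remark \ref{remark:C1grande}, so the sum is comparable to $C_1^{\,n-m+1}\lambda_1^{n/2}$), hence
\[
A_m \;\le\; \widetilde{C}\,C_1^{\,n+1-m}\,\lambda_1^{m/2},
\]
where $\widetilde{C}$ absorbs $A_{n+1}$ and the sum constant. Now the crucial point: we only need this for $(1-b_1)n \le m \le n+1$, so $n+1-m \le b_1 n + 1$ and $m/2 \ge (1-b_1)n/2$. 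Therefore
\[
C_1^{\,n+1-m}\,\lambda_1^{m/2} \;\le\; C_1\,\bigl(C_1^{\,b_1}\bigr)^{n}\,\bigl(\lambda_1^{(1-b_1)/2}\bigr)^{n} \;=\; C_1\,\Bigl(C_1^{\,b_1}\lambda_1^{(1-b_1)/2}\Bigr)^{n}.
\]
Given the target rate $\lambda_2 \in (\lambda_1,1)$, I would choose $b_1 = b_1(C_1,\lambda_1,\lambda_2) \in (0,1)$ small enough that $C_1^{\,b_1}\lambda_1^{(1-b_1)/2} \le \lambda_2$; this is possible because at $b_1 = 0$ the left side equals $\lambda_1^{1/2} < \lambda_2$ (even $<\lambda_2^{1/2}$ would be fine, but $\lambda_1^{1/2}<1$ suffices after adjusting), and the expression is continuous in $b_1$. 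Setting $C_2 = C_1 \widetilde{C}$ then yields $A_m \le C_2\,\lambda_2^{n}$, which is exactly the claim since $| B_{m,g}(h(v)) - B_{m,f}(v)| \le A_m$ for $v \in \widehat{E}_{n+1}\cap J_m$.

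The main obstacle I anticipate is not the algebra of the geometric sum but making the trade-off in the exponents fully rigorous: one must be careful that the amplification constant $C_1$ in Lemma \ref{lema_pasocero} is genuinely uniform over all levels $k$ in the range (it is, since real bounds and the $M$-controlled property hold for all $k \ge n_0$, and the exponential-convergence constants $C,\lambda$ are level-independent), and that the window width $b_1$ can be chosen depending only on $C$, $C_1$, $\lambda_1$ and $\lambda_2$ — not on $n$ or $m$. A secondary technical point is the treatment of the base level $n+1$: one should verify that $A_{n+1}$ is itself bounded by $C\lambda^{n+1}$ (hence harmless, and in fact better than $\lambda_2^n$), which follows because at its own level the two-bridges endpoints are exactly the dynamically normalized points and the discrepancy is governed by \eqref{eq:limitfundintervals}. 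Once these uniformity issues are pinned down, the proposition follows by the displayed chain of inequalities.
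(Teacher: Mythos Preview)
Your proposal is correct and follows the same route as the paper: both establish the base case $A_{n+1}\le C\lambda_1^{n+1}$ by inspecting the three possible endpoints of $\widehat E_{n+1}\cap J_{n+1}$, iterate Lemma~\ref{lema_pasocero} from level $n+1$ down to $m$, sum the resulting geometric series (dominated by its top term thanks to $C_1\lambda_1>2$ from Remark~\ref{remark:C1grande}), and then choose $b_1$ small so that the amplification $C_1^{b_1 n}$ is beaten by the decay. The one wrinkle is that your endgame needs $C_1^{b_1}\lambda_1^{1/2}\le\lambda_2$, hence $\lambda_1^{1/2}<\lambda_2$ at $b_1=0$, whereas the proposition only assumes $\lambda_1<\lambda_2$; the paper's own computation silently replaces the $\lambda_1^{m/2}$ from Lemma~\ref{lema_pasocero} by $\lambda_1^{m}$ in the sum and lands on the condition $\widetilde C^{\,b_1}\lambda_1<\lambda_2$, so this is a cosmetic discrepancy already present in the paper rather than a gap in your argument.
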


\begin{proof}[Proof of Proposition \ref{pro_pasouno}]
	Let $m=n+1$. Note that the set $\widehat{E}_{n+1} \cap J_{n+1}$ is equal to
	$\{f^{q_{n+1}}(c_0), f^{q_{n+2}}(c_0), \mathfrak{c}_{n}^f\}$ or $\{f^{q_{n+1}}(c_0), f^{q_{n+2}}(c_0)\}$  depending on whether $m$ is or is not a two-bridges level. We observe that (by definition)  $| B_{n+1,f}(f^{q_{n+1}}(c_0)) - B_{n+1,g}(g^{q_{n+1}}(c_0))| =0$, Inequality  \eqref{eq:diferenciafreecriticalpoint} implies that $|B_{n+1,f}(\mathfrak{c}_{n}^f) - B_{n+1,g}(\mathfrak{c}_{n}^g)| \leq C\, \lambda^{(n+1)/d}$ and that the hypotheses under the renormalizations (Inequality \eqref{ineq:convergencerenormalizations})  implies that $|B_{n+1,f}(f^{q_{n+2}}(c_0)) - B_{n+1,g}(g^{q_{n+2}}(c_0))|  
	 \leq C\, \lambda^{n+1}$.
Therefore,$$\max_{v \in \widehat{E}_{n+1} \cap J_{n+1}} |B_{n+1,f}(v) - B_{n+1,g}(h(v))| \leq C\ (\lambda^{1/d})^{n+1}  \leq C\, \lambda_1^{(n+1)}.$$If $m < n+1$ then by previous inequality and Lemma \ref{lema_pasocero} (inductively) we get for $\widetilde{C}=\max\{ C, C_1\}>1$
	\begin{align*}
	\max_{v \in \widehat{E}_{n+1} \cap J_{m}} |B_{m,f}(v)  - B_{m,g}(h(v))|
	&\leq \sum_{k=0}^{n-m-1} \, \widetilde{C}^{n-m-k+2} \lambda_1^{n-k+1}  \\
	&= \lambda_1^{n+1}\, \widetilde{C}^{n-m+2}\, \sum_{k=0}^{n-m-1} \, \frac{1}{(\widetilde{C} \lambda_1)^{k}} \\
	 &=  \lambda_1^{n+1}\, \widetilde{C}^{n-m+2}\, 
	 \left( \dfrac{(1/\widetilde{C}\,\lambda_1)^{n-m}-1}{(1/\widetilde{C}\,\lambda_1)-1} \right) \\
&\leq \frac{\widetilde{C}^{n-m+3} \, \lambda_1^{n+2}}{\widetilde{C}\lambda_1-1}\\
  	&\leq \left(\frac{\lambda_1^2 \ \widetilde{C}^3}{\widetilde{C} \lambda_1 -1}\right) (\widetilde{C}^{b_1}\lambda_1)^n.
  	\end{align*}
  By Remark \ref{remark:C1grande}, $\widetilde{C} \lambda_1>2$. The result follows taking $C_2=\lambda_1^2 \ \widetilde{C}^3 (\widetilde{C} \lambda_1 - 1)^{-1}>1$ and, given $\lambda_2 \in (\lambda_1,1)$, choosing $b_1 \in (0, 1)$ such that  $ \widetilde{C}^{b_1}\lambda_1 < \lambda_2$.
	\end{proof}

\section{Key estimates for intervals of $\mathcal{\widehat{P}}_{n+1}$}\label{sec:estimatesintervals}

In this section we will extend the estimate obtained in Proposition \ref{pro_pasouno}, but this time for intervals of $\widehat{\mathcal{P}}_{n+1}$ inside $J_m$ with $(1-b_1)n \leq m < n-1$. First, using an estimate for adjacent intervals, we get the estimates for intervals of $\widehat{\mathcal{P}}_{n+1}$ inside $J_{n}$. Then, by an inductive argument, we get the estimates for intervals inside $J_m$.

Let us first get an estimate for an interval of $\widehat{\mathcal{P}}_{n+1}$ and its image by the first return map $f^{q_{m+1}}$ for $(1-b_1)n \leq m \leq n$. As we mention before, in general, if $I \in  \widehat{\mathcal{P}}_{n+1}$ then not necessarily $f^{q_{m+1}}(I) \in \widehat{\mathcal{P}}_{n+1}$.

 \begin{proposition}\label{Pro_pasodos} Let $\lambda_2 \in (0,1)$, $C_2>1$ and $b_1 \in (0,1)$ be as in Proposition \ref{pro_pasouno}. There exist $C_3=C_3(\lambda_2, C_2,M)>1$ and $\lambda_3=\lambda_3(\lambda_2, b_1) \in (0,1)$  such that for any  $(1-b_1)n \leq m \leq n$ and  $I \in \widehat{\mathcal{P}}_{n+1}$, with  $I \subset (I_m \setminus I_{m+2})$,  we have:
	\begin{equation*}
		\left| \log \frac{|B_{m,g}(h(f^{q_{m+1}}(I))|}{|B_{m,f}(f^{q_{m+1}}(I))|} -  \log \frac{|B_{m,g}(h(I))|}{|B_{m,f}(I)|}  \right| \leq C_3 \, \lambda_2^{n/2}.
	\end{equation*}
\end{proposition}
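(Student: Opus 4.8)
The plan is to track how the ratio $|h(I)|/|I|$ changes when we apply one copy of the first-return map $f^{q_{m+1}}$, working in the rescaled coordinates $B_{m,f}$ and $B_{m,g}$ where $f^{q_{m+1}}$ becomes (essentially) the branch $\mathcal{R}^mf|_{[0,1]}$ of the renormalization. First I would reduce the statement to the tubular picture of Section~\ref{subsec:unbounded}: since $I\subset I_m\setminus I_{m+2}$, the point $B_{m,f}(I)$ lives in the interval $[0,1]$ on which the renormalization acts, and its image under $B_{m,f}\circ f^{q_{m+1}}\circ B_{m,f}^{-1}=\mathcal{R}^mf$ is again an atom-sized interval. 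Writing $I=[u,v]$ with $u,v\in\widehat E_{n+1}\cap(I_m\setminus I_{m+2})$, and using $a=B_{m,f}(u)$, $b=B_{m,f}(v)$, $a'=B_{m,g}(h(u))$, $b'=B_{m,g}(h(v))$, the quantity to estimate is
\[
\left|\log\frac{|\mathcal{R}^mg([a',b'])|}{|\mathcal{R}^mf([a,b])|}-\log\frac{b'-a'}{b-a}\right|.
\]
By the mean value theorem this equals $\bigl|\log D\mathcal{R}^mg(\theta')-\log D\mathcal{R}^mf(\theta)\bigr|$ for intermediate points $\theta\in[a,b]$, $\theta'\in[a',b']$, so everything comes down to comparing $D\mathcal{R}^mf$ at $\theta$ with $D\mathcal{R}^mg$ at the corresponding $\theta'$.

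The comparison splits according to where $\theta$ sits relative to the center $z_m$ (or $w_m$) of the tubular set $M_{m,L}^f$. If $\theta$ is at bounded combinatorial distance from the ends $B_{m,f}(f^{q_m}(c_0))$ or $B_{m,f}(f^{-q_{m+1}}(\mathfrak c_m))$ — i.e. in the ``bounded part'' where we took only $L$ iterates — then $D\mathcal{R}^mf$ is bounded away from $0$ and $\infty$ by $M$-control (Definition~\ref{def:Kcontrolledcommutingpair}(6)(7)(8)), $\theta$ and $\theta'$ are $C\lambda^m$-close by Proposition~\ref{pro_pasouno}, and the $C^2$ (in fact $C^1$ suffices here) exponential convergence $d_2(\mathcal{R}^mf,\mathcal{R}^mg)\le C\lambda^m$ gives $|D\mathcal{R}^mf(\theta)-D\mathcal{R}^mg(\theta')|\le C'\lambda^m$; since both derivatives are bounded below, the same bound holds after taking logarithms. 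If instead $\theta$ is in the Funnel or Tunnel of $M_{m,L}^f$, then by Claim~\ref{claimP} both $\theta$ and $\theta'$ are within $C_*\lambda_*^{m/2}$ of $z_m$, where $D\mathcal{R}^mf(z_m)=1=D\mathcal{R}^mg(h(z_m))$; using $|D^2\mathcal{R}^mf|\le M$ (and $|D^2\mathcal{R}^mg|\le M$) from $M$-control together with the $C^2$-convergence $|D^2\mathcal{R}^mf(z_m)-D^2\mathcal{R}^mg(h(z_m))|\le C\lambda^m$, a second-order Taylor expansion at $z_m$ yields $|D\mathcal{R}^mf(\theta)-D\mathcal{R}^mg(\theta')|\le C''\lambda_*^{m/2}$, and since both derivatives are within $M C_*\lambda_*^{m/2}$ of $1$ they are bounded away from $0$, so again the logarithmic estimate follows. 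Taking $m\ge(1-b_1)n$ converts $\lambda_*^{m/2}$ into $\lambda_2^{n/2}$ after adjusting the base, which fixes $\lambda_3=\lambda_3(\lambda_2,b_1)$ and $C_3=C_3(\lambda_2,C_2,M)$.

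The main obstacle is the Funnel/Tunnel case: there $D\mathcal{R}^mf(\theta)$ is extremely close to $1$ and so is $D\mathcal{R}^mg(\theta')$, and a naive bound on $|\log D\mathcal{R}^mf(\theta)-\log D\mathcal{R}^mg(\theta')|$ could be spoiled if the difference of the derivatives were only comparable to $|1-D\mathcal{R}^mf(\theta)|$ itself. The key point that saves the argument is precisely that $\theta,\theta'$ cluster near $z_m$ at rate $\lambda_*^{m/2}$ (Claim~\ref{claimP}) \emph{and} that the two maps agree to second order there up to $\lambda^m$ — this is exactly why the hypothesis of $C^2$ (rather than merely $C^1$) exponential convergence of the renormalizations is needed, as anticipated in the introduction. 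I would also need to handle the mild bookkeeping that $\theta'=B_{m,g}(h(\cdot))$ is not literally $h$ applied to $\theta$'s preimage but to the endpoints of $I$; convexity of $\log$ and the fact that $\widehat{\mathcal P}_{n+1}$ has real bounds (property~(7) of Section~\ref{sec:nuevaparticion}) make the intermediate-point comparison harmless. Finally, the passage from $I\subset I_m\setminus I_{m+2}$ to all relevant atoms uses that $\widehat{\mathcal P}_{n+1}$-atoms in $I_m\setminus I_{m+2}$ are, by construction of the two-bridges partition, iterates $f^{jq_{m+1}}$ of the basic pieces $\Delta_1$, $\widehat\Delta_m$, $\Delta_{a_{m+1}}$, so their $B_{m,f}$-images are controlled by the iterates appearing in the proof of Lemma~\ref{lema_pasocero}.
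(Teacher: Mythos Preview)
Your mean-value-theorem reduction to $|\log D\mathcal{R}^mg(\theta')-\log D\mathcal{R}^mf(\theta)|$ is exactly what the paper does in its case~(b), and there the estimate goes through as you outline: $|\log D\mathcal{R}^mg(a)-\log D\mathcal{R}^mf(a)|\le CM\lambda^m$ from $C^1$-convergence, and $|\log D\mathcal{R}^mf(a)-\log D\mathcal{R}^mf(b)|\le M^2|a-b|$ using only the uniform bound $|D^2\mathcal{R}^mf|/|D\mathcal{R}^mf|\le M^2$ from $M$-control. No Funnel/Tunnel geometry, no Claim~\ref{claimP}, and no second-order Taylor expansion at $z_m$ are needed---the sole reason $|a-b|$ is small is that $|a-b|\le |B_{m,f}(I)|+2C_2\lambda_2^n$ and the interval itself is small. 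The tubular machinery you import belongs to Lemma~\ref{lema_pasocero}, not here.

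The genuine gap is the opposite regime, which the paper treats as its case~(a): $|B_{m,f}(I)|\ge\lambda_2^{n/2}$. This case does occur; for $m=n$ the atoms $\Delta_1,\widehat\Delta_n^{-1}\in\widehat{\mathcal P}_{n+1}$ satisfy $|B_{n,f}(\Delta_1)|=s_n^f$, which is bounded below by real bounds. For such intervals the MVT points $\theta\in B_{m,f}(I)$ and $\theta'\in B_{m,g}(h(I))$ can differ by as much as $|B_{m,f}(I)|$ itself, so the bound $M^2|\theta-\theta'|$ is useless. Your assertion that ``$\theta$ and $\theta'$ are $C\lambda^m$-close by Proposition~\ref{pro_pasouno}'' is not what that proposition gives: it controls $|B_{m,f}(v)-B_{m,g}(h(v))|$ for \emph{endpoints} $v\in\widehat E_{n+1}$, not for intermediate MVT points, and the ``mild bookkeeping'' you defer cannot close a gap of order $|B_{m,f}(I)|$. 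The paper abandons the derivative comparison entirely in this regime and argues directly on lengths: since $|B_{m,f}(I)|\ge\lambda_2^{n/2}$ and $|B_{m,f}(f^{q_{m+1}}(I))|\ge\lambda_2^{n/2}/M$ (via $D\mathcal{R}^mf\ge 1/M$), the endpoint errors $\le C_2\lambda_2^n$ from Proposition~\ref{pro_pasouno} convert to ratio errors of order $\lambda_2^{n/2}$ for both $I$ and its image.

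Finally, your diagnosis that the Funnel/Tunnel case is ``the main obstacle'' and the place where $C^2$-convergence enters is off for this particular proposition: the proof of Proposition~\ref{Pro_pasodos} uses only $C^1$-convergence of renormalizations plus $M$-control of second derivatives. The $C^2$-convergence hypothesis is spent elsewhere, in locating the centers $z_m$ (inequality~\eqref{inq:estimativacentrofunnel}) and in Claim~\ref{claimI} inside the proof of Proposition~\ref{Pro_pasotres}.
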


\begin{proof}[Proof of Proposition \ref{Pro_pasodos}]
	Let $I=[v,w]$ with $v,w \in \widehat{E}_{n+1}$ and let us assume that $I$ does not have $\mathfrak{c}_m$ as an endpoint.
	\begin{itemize}
		\item [a)] If $|B_{m,f}(I)|=|B_{m,f}(v) - B_{m,f}(w)| \geq \lambda_2^{n/2}$ then by Proposition \ref{pro_pasouno} we have, for $(1-b_1)n \leq m \leq n+1$,
			\begin{align*}
				\left| \frac{B_{m,g}(h(I))}{B_{m,f}(I)} -1 \right|
				\leq  \frac{|B_{m,g}(h(v))-B_{m,f}(v)| + |B_{m,g}(h(w))-B_{m,f}(w)|}{\lambda_2^{n/2}} 
				\leq \frac{2\, C_2\, \lambda_2^{n}}{\lambda_2^{n/2}} = 2\,C_2 \, \lambda_2^{n/2}.
			\end{align*}
		On the other hand, by Mean Value Theorem, $M$-controlled hypothesis (Theorem \ref{thm:Mcontrolledmcm}) and the assumption  we have 
		\begin{align*}
			|B_{m,f}(f^{q_{m+1}}(I))| 
			&= |D\mathcal{R}^m_f(z^*)|\, |B_{m,f}(v) - B_{m,f}(w)|  \geq \frac{1}{M} \lambda_2^{n/2}.
		\end{align*}	
	Therefore, using  previous inequality, Proposition  \ref{pro_pasouno} and Mean Value Theorem we obtain,
			\begin{align*}
			\hspace{1cm}	\left| \frac{B_{m,g}(g^{q_{m+1}}(h(I)))}{B_{m,f}(f^{q_{m+1}}(I))} -1 \right| 
				&\leq \frac{|\mathcal{R}^m_g(B_{m,g}(h(v))) - \mathcal{R}^m_g(B_{m,g}(h(w)))| \ + \ |\mathcal{R}^m_f(B_{m,f}(v)) - \mathcal{R}^m_f(B_{m,f}(w))|}{|B_{m,f}(f^{q_{m+1}}(I))|}\\
				&\leq  \frac{M^2\, \left( \ |B_{m,f}(w)-B_{m,g}(h(w))| + |B_{m,f}(v)-B_{m,g}(h(v))| \ \right)}{\lambda_2^{n/2}} \\
				&\leq 2 \, M^2 \, C_2\, \lambda_2^{n/2}.
			\end{align*}
		The proof follows in this case taking  $C_3=2M^2 C_2$.		
		\item [b)] If $|B_{m,f}(I)|=|B_{m,f}(v) - B_{m,f}(w)| \leq \lambda_2^{n/2}$, then by the Mean Value Theorem and the exponential converge of renormalizations there exist $a \in B_{m,g}(h(I))$, $b \in B_{m,f}(I)$ and $\beta$ between $a$ and $b$ (note that $\beta$ can not be a critical point of $\mathcal{R}^mf$) such that
		{\small
		\begin{align*}
		\hspace{1cm}	\left|  \log \frac{|B_{m,g}(h(f^{q_{m+1}}(I))|}{|B_{m,f}(f^{q_{m+1}}(I))|} - \log \frac{|B_{m,g}(h(I)|}{|B_{m,f}(I)|}  \right| 
			&= \left| \log \frac{|B_{m,g}(h(f^{q_{m+1}}(I))|}{|B_{m,g}(h(I))|} - \log \frac{|B_{m,f}(f^{q_{m+1}}(I))|}{|B_{m,f}(I)|} 			\right| \\
			&= \left| \log D\mathcal{R}^mg(a)  - \log  D\mathcal{R}^mf(b)  \right| \\
			&\leq \left| \log D\mathcal{R}^mg(a)  - \log  D\mathcal{R}^mf(a)  \right| +  \left| \log D\mathcal{R}^mf(a)  - \log D\mathcal{R}^mf(b)  \right| \\ 
			&\leq C\, M\, \lambda^m  + \frac{|D^2\mathcal{R}^mf(\beta)|}{|D\mathcal{R}^mf(\beta)|} |b-a|\\
			&\leq C\, M\, \lambda^m  + M^{2} |b-a|.
		\end{align*}
	}
		Since
			\begin{align*}
	 \hspace{0.8cm}	|b-a| \leq |B_{m,f}(I)| + |B_{m,g}(h(v))-B_{m,f}(v)| + |B_{m,g}(h(w))-B_{m,f}(w)| \leq \lambda_2^{n/2} + 2\, C_2\, \lambda_2^{n},
			\end{align*}
and  $\lambda^m \leq \lambda_2^{n/2}$, then we obtain the lemma  for  $C_3=\max\{ CM, \, 2M^2 \,C_2 \}>1$. 
	\end{itemize}
Now, if $I$ has $\mathfrak{c}_m$ as an endpoint, we  obtain the proposition repeating the same proof above but for $f^{-q_{m+1}}$ instead of $f^{q_{m+1}}$ and for $I$ replacing  by $f^{-q_{m+1}}(I_L)$, where $I_L=f^{q_{m+1}}(I)$.
\end{proof}

Let us note that Proposition \ref{Pro_pasodos} and its proof can be extend to intervals $I \in \widehat{\mathcal{P}}_{n+1}$ with $I \subset I_{m+1}$, and for the map $f^{q_m}$ instead of $f^{q_{m+1}}$:

\begin{corollary}\label{coro:Pro_pasodos}
	 Let  $b_1 \in (0,1)$ be as in Proposition \ref{pro_pasouno},   $C_3>0$ and $\lambda_2 \in (0,1)$ as in Proposition \ref{Pro_pasodos}. For any  $(1-b_1)n \leq m \leq n$,  $I \in \widehat{\mathcal{P}}_{n+1}$ with  $I \subset  I_{m+1}$  we have:
		\begin{equation*}
			\left| \log \frac{|B_{m,g}(h(f^{q_{m}}(I))|}{|B_{m,f}(f^{q_{m}}(I))|} -  \log \frac{|B_{m,g}(h(I))|}{|B_{m,f}(I)|}  \right| \leq C_3 \, \lambda_2^{n/2}.
		\end{equation*}
\end{corollary}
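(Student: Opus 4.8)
The plan is to reduce Corollary \ref{coro:Pro_pasodos} directly to Proposition \ref{Pro_pasodos} by observing that the roles of $I_m$ and $I_{m+1}$, and of the maps $f^{q_{m+1}}$ and $f^{q_m}$, are symmetric from the point of view of the commuting pair structure. Concretely, the map $f^{q_m}$ restricted to $I_{m+1}$ is, after conjugating by the affine chart $B_{m,f}$, exactly the $\xi$-branch of the renormalization $\mathcal{R}^m f$, just as $f^{q_{m+1}}$ restricted to $I_m$ is the $\eta$-branch. The $M$-controlled hypothesis (Theorem \ref{thm:Mcontrolledmcm}) gives $\|\xi\|_{C^3}\le M$ and a lower bound on $D\xi$ exactly as it did for $\eta$, so every quantitative ingredient used in the proof of Proposition \ref{Pro_pasodos} is available verbatim for the $\xi$-branch.

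The execution I would carry out: write $I=[v,w]$ with $v,w\in\widehat{E}_{n+1}$, $I\subset I_{m+1}$. Since the sequence $\{\widehat{\mathcal{P}}_n\}$ is dynamically defined and $I_{m+1}\in\widehat{\mathcal{P}}_{m+1}$, the images $f^{q_m}(v),f^{q_m}(w)$ are still endpoints in $\widehat{E}_{n+1}$ (they are iterates of a first return map at $c_0$), so the objects on both sides of the claimed inequality make sense. Then I split into the same two cases as before according to whether $|B_{m,f}(I)|\ge\lambda_2^{n/2}$ or $|B_{m,f}(I)|\le\lambda_2^{n/2}$. In the first case, Proposition \ref{pro_pasouno} bounds the displacement of the four endpoints $v,w,f^{q_m}(v),f^{q_m}(w)$ by $C_2\lambda_2^n$; the Mean Value Theorem together with the lower bound $D\mathcal{R}^m_f\ge 1/M$ on the $\xi$-branch gives $|B_{m,f}(f^{q_m}(I))|\ge M^{-1}\lambda_2^{n/2}$; and combining these as in part (a) of the proof of Proposition \ref{Pro_pasodos} yields the bound with $C_3=2M^2C_2$. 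In the second case I again write the difference of logarithms as $|\log D\mathcal{R}^m g(a)-\log D\mathcal{R}^m f(b)|$ for suitable $a\in B_{m,g}(h(I))$, $b\in B_{m,f}(I)$, split it into a convergence term $\le CM\lambda^m$ and a distortion term $\le M^2|b-a|$ using $\|\mathcal{R}^m f\|_{C^2}\le M$, and estimate $|b-a|\le\lambda_2^{n/2}+2C_2\lambda_2^n$ exactly as before; since $\lambda^m\le\lambda_2^{n/2}$ on the range $(1-b_1)n\le m\le n$ (shrinking $b_1$ if necessary so that $\lambda^{1-b_1}<\lambda_2^{1/2}$), this closes the case.

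There is one genuinely new point to check compared with Proposition \ref{Pro_pasodos}: the free critical point endpoint. In Proposition \ref{Pro_pasodos} the troublesome interval was one having $\mathfrak{c}_m$ as an endpoint, handled by passing to $f^{-q_{m+1}}$. For intervals inside $I_{m+1}$ the relevant exceptional endpoints are those on $\partial I_{m+1}=\{c_0,f^{q_{m+1}}(c_0)\}$; at $c_0$ the branch $\xi$ is a genuine critical branch rather than a diffeomorphism, so the Mean Value Theorem argument with the bound $D\xi\ge 1/M$ fails there. However, $c_0$ is a fixed point of the relevant combinatorics in the sense that $B_{m,f}(c_0)=0=B_{m,g}(h(c_0))$, so any interval of $\widehat{\mathcal{P}}_{n+1}$ abutting $c_0$ has its left endpoint already matched exactly; the estimate then follows from Proposition \ref{pro_pasouno} applied to the other endpoint alone, together with the $C^3$-bound on $\xi$ to control the distortion of $\xi$ near $0$ via a Koebe-type argument (the image $\xi(I_{m+1})\subset I_m$ has definite space inside $J_m$ by real bounds). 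This is the step I expect to require the most care, but it is a local variant of arguments already present in the paper rather than anything new; all the rest is a transcription of the proof of Proposition \ref{Pro_pasodos} with $\eta$ replaced by $\xi$ and $q_{m+1}$ by $q_m$.
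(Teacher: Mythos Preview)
Your proposal is correct and follows the paper's own approach exactly: the paper's entire proof is the single sentence ``Proposition \ref{Pro_pasodos} and its proof can be extend to intervals $I \in \widehat{\mathcal{P}}_{n+1}$ with $I \subset I_{m+1}$, and for the map $f^{q_m}$ instead of $f^{q_{m+1}}$,'' and you carry this out by running cases (a) and (b) verbatim for the $\xi$-branch of $\mathcal{R}^m f$ in place of the $\eta$-branch. Your additional discussion of the interval abutting $c_0$ is in fact more careful than the paper itself, which does not comment on this point; note that since Definition~\ref{def:Kcontrolledcommutingpair} provides a $C^3$ bound on $\xi$ but no lower bound on $D\xi$ near the origin, your observation that case~(b) needs a separate treatment there is well taken, and the fix you sketch (one endpoint already matched, then a local distortion argument) is adequate.
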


Now, let us obtain an estimate  for every interval $I \in \widehat{\mathcal{P}}_{n+1}$ contained in $J_{n}$.

	\begin{proposition}\label{Pro_pasotres} There exist a universal constant $C_4=C_4(C, C_2,C_3)>0$ and  $\lambda_3 = \lambda_3(\lambda_2) \in (0,1)$ such that for any  $I \in \widehat{\mathcal{P}}_{n+1}$, with $I \subset J_{n}$, 
	\begin{equation*}
		\left| \log \frac{|h(I)|}{|I|} \right| \leq C_4 \, \lambda_3^{n}.
	\end{equation*}
\end{proposition}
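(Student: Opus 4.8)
\textbf{Proof strategy for Proposition \ref{Pro_pasotres}.}
The plan is to reduce the estimate for an arbitrary atom $I\in\widehat{\mathcal{P}}_{n+1}$ with $I\subset J_n$ to the estimates already established in Proposition \ref{pro_pasouno}, Proposition \ref{Pro_pasodos} and Corollary \ref{coro:Pro_pasodos}. The starting point is the identity
\[
\log\frac{|h(I)|}{|I|}=\log\frac{|h(I)|}{|B_{n,g}(h(I))|}\;-\;\log\frac{|I|}{|B_{n,f}(I)|}\;+\;\log\frac{|B_{n,g}(h(I))|}{|B_{n,f}(I)|},
\]
where $B_{n,f}$ and $B_{n,g}$ are the affine rescalings onto $[-s_n^f,1]$ and $[-s_n^g,1]$. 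The first two terms involve only the affine maps $B_{n,f},B_{n,g}$, whose slopes are $1/|I_n^f|$ and $1/|I_n^g|$; their difference is controlled exponentially by the hypothesis \eqref{eq:limitfundintervals} (equivalently by convergence of scaling ratios). So the whole problem is to bound
\[
\left|\log\frac{|B_{n,g}(h(I))|}{|B_{n,f}(I)|}\right|,
\]
i.e. to show that in the renormalized coordinates $h$ distorts the length of each atom of $\widehat{\mathcal{P}}_{n+1}|_{J_n}$ by at most $1+O(\lambda_3^n)$.

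The second step is to treat the atoms of $\widehat{\mathcal{P}}_{n+1}$ contained in $J_n$ according to the two-bridges construction at level $n$ (Case 1 if $n$ is a two-bridges level, Case 2 otherwise). In either case the atoms inside $I_n=I_n^f$ are, by the construction recalled in Section \ref{sec:nuevaparticion}, images under iterates of the first return maps $f^{q_{n+1}}$ and $f^{q_n}$ (the maps $\Delta_1^j=f^{jq_{n+1}}(\Delta_1)$, $\widehat\Delta_n^j=f^{jq_{n+1}}(\widehat\Delta_n)$, $\Delta_{a_{n+1}}^{-j}$, etc.) of a bounded collection of ``seed'' intervals whose endpoints lie in $\widehat E_{n+1}$. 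Applying Proposition \ref{pro_pasouno} with $m=n$ to the (boundedly many) endpoints generating these seeds gives that each seed interval $I_0$ satisfies $\big|\log\frac{|B_{n,g}(h(I_0))|}{|B_{n,f}(I_0)|}\big|\le C\,\lambda_2^n$, using that $|B_{n,f}(I_0)|$ is bounded below by a real-bounds constant (the seeds $\Delta_1,\widehat\Delta_n,\Delta_{a_{n+1}}$ are definite in the renormalized picture). Then each atom of $\widehat{\mathcal{P}}_{n+1}|_{J_n}$ is obtained from a seed by finitely many — but possibly $O(a_{n+1})$ many — applications of $f^{\pm q_{n+1}}$, $f^{\pm q_n}$, which in renormalized coordinates are $\mathcal{R}^nf$ and its inverse; Proposition \ref{Pro_pasodos} and Corollary \ref{coro:Pro_pasodos} say each single such step changes $\log\frac{|B_{n,g}(h(\cdot))|}{|B_{n,f}(\cdot)|}$ by at most $C_3\lambda_2^{n/2}$. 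Summing over the at most $a_{n+1}$ steps is \emph{not} affordable term-by-term, so the key point is to organize the iteration so that only $O(1)$ genuinely ``bad'' steps occur, exactly as in the two-bridges design: $r(n)$ and $\ell(n)$ iterates accumulate near the centers $z_n,w_n$ of the tubular sets, and near those centers $\mathcal{R}^nf$ is essentially a parabolic map for which the cumulative distortion telescopes. Concretely, for a chain of consecutive atoms $\Delta_1^0,\Delta_1^1,\dots$ one estimates $\sum_j\big|\log D\mathcal{R}^nf(\cdot)\big|$ along the orbit by comparing with the exact parabolic model of Lemma \ref{lemaKTfunnel} / Lemma \ref{lemaKTtunnel}, where the sum $\sum_j (s_j-s_{j+1})/s_j$ telescopes to a bounded quantity; the exponential closeness of the two parabolic models (from $C^2$-convergence and the center estimates \eqref{inq:estimativacentrofunnel}, \eqref{inq:estimativacentrofunnel2}, Lemma \ref{lemma:estimatesparameterstubular}) then gives that the \emph{difference} between the $f$- and $g$-distortions along the whole chain is $O(\lambda_3^n)$ rather than $O(a_{n+1}\lambda_2^{n/2})$.

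The third step is to remove the restriction to $I\subset I_n$ and cover also $I\subset I_{n+1}$: such atoms are carried into $I_n$ by the definite map $f^{q_n}$ (Corollary \ref{coro:Pro_pasodos} with $m=n$), reducing to the previous case. Finally one collects constants: choosing $\lambda_3\in(\lambda_2,1)$ strictly larger than $\lambda_2$ absorbs the finitely many factors $C_3\lambda_2^{n/2}$ and the $\widetilde{C}$-type geometric sums, and the hypothesis-driven term from \eqref{eq:limitfundintervals} is already $O(\lambda^n)$; this yields $\big|\log\frac{|h(I)|}{|I|}\big|\le C_4\lambda_3^n$ for all $I\in\widehat{\mathcal{P}}_{n+1}$ with $I\subset J_n$. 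I expect the main obstacle to be precisely the bookkeeping in step two: showing that the potentially $a_{n+1}$-many applications of $\mathcal{R}^nf$ do not destroy the exponential bound, which forces one to use the parabolic (funnel/tunnel) normal forms and the matched-center estimates rather than the crude per-step bound of Proposition \ref{Pro_pasodos}, and to do this uniformly whether or not $n$ is a two-bridges level.
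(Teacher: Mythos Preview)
Your proposal is correct and follows essentially the same architecture as the paper's proof: reduce to renormalized coordinates via \eqref{eq:limitfundintervals}, handle the seed intervals $\Delta_1,\widehat\Delta_n,\Delta_{a_{n+1}}$ via Proposition \ref{pro_pasouno}, use the per-step bound of Proposition \ref{Pro_pasodos} for a bounded number of initial iterates, and then invoke the parabolic funnel/tunnel asymptotics (Lemmas \ref{lemaKTfunnel}, \ref{lemaKTtunnel}, \ref{lemma:estimatesparameterstubular} together with the center estimates \eqref{inq:estimativacentrofunnel}, \eqref{inq:estimativacentrofunnel2}) for the potentially $O(a_{n+1})$ remaining iterates. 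You have correctly isolated the crux: the naive iteration of Proposition \ref{Pro_pasodos} would cost $a_{n+1}\,\lambda_2^{n/2}$, which is useless, so one must exploit the parabolic normal form.

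Two minor points of implementation where the paper differs from your sketch. First, the paper does not literally estimate $\sum_j\big|\log D\mathcal{R}^nf\big|$ along the orbit; instead it passes to tubular coordinates (Definition \ref{def:tubularcoordinates}), shows via a short computation (Claim \ref{claimI}) that this change of coordinates is exponentially cheap, and then compares the explicit length formulas $s_i-s_{i+1}=(1+\delta_i)/(i+s_0^{-1})^2$ (funnel) and $s_{i+1}-s_i=\varepsilon(1+\delta_i)/\cos^2(\sqrt{\varepsilon}\,i)$ (tunnel) for $\mathcal F_n$ versus $\mathcal G_n$ directly. The funnel ratio is then controlled by the elementary monotonicity $\big|\log\frac{i+a}{i+b}\big|\le\big|\log\frac{a}{b}\big|$, which reduces the estimate for every deep iterate back to $\big|\log(h(s_0)/s_0)\big|$, i.e.\ to one of the already-controlled boundary intervals; the tunnel part uses Lemma \ref{lemma:estimatesparameterstubular} for the mismatch $|i_c-j_c|$ and $|h(\varepsilon_n)/\varepsilon_n-1|$. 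Second, your step three is simpler than you suggest: the only atom of $\widehat{\mathcal P}_{n+1}$ contained in $I_{n+1}$ is $I_{n+1}$ itself, so there is no transport needed there---that case is covered directly by \eqref{eq:limitfundintervals}.
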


\begin{proof}[Proof of Proposition \ref{Pro_pasotres}]
	Observe that for any $I \in \widehat{\mathcal{P}}_{n+1}$ we have:
	\begin{equation*}
		\left| \log \frac{|h(I)|}{|I|} \right| =
		\left| \log  \frac{|h(I_{n})|}{|I_{n}|} \, \frac{|B_{n,g} (h(I))|}{|B_{n,f}(I)|} \right| \leq
		\left| \log  \frac{|h(I_{n})|}{|I_{n}|}  \right| +
		\left| \log  \frac{|B_{n,g} (h(I))|}{|B_{n,f}(I)|} \right|.
	\end{equation*}
	By Inequality \eqref{eq:limitfundintervals}, we just need to prove that for any $I \in \widehat{\mathcal{P}}_{n+1} \setminus \{I_{n+1}, I_{n+2}\}$, contained in $I_n$, there exist $\widetilde{\lambda}= \widetilde{\lambda}(\lambda_2, \lambda_3) \in (0,1)$ and $\widetilde{C}>0$ such that
	\begin{equation}\label{eq:pasotres}
		\left| \log   \frac{|B_{n,g} (h(I))|}{|B_{n,f}(I)|} \right| \leq \widetilde{C}\, \widetilde{\lambda}^n.
	\end{equation}
Let us prove Inequality \eqref{eq:pasotres} for $I \in \left\{  \{ \widehat{\Delta}_n^{-j} \}_{j=1}^{j=r(n) - 1}, \, \Delta_R^{n+1},  \{ \Delta_{1}^{j} \}_{j=0}^{j=r(n)-1} 
\right\}$, the proof is analogous for intervals in the set $\left\{I_{n+2}(c_0), \{ \Delta_{a_{n+1}}^{j} \}_{j=0}^{j=\ell(n) - 1}, \, \Delta_L^{n+1}, \{ \widehat{\Delta}_n^j \}_{j=0}^{j=\ell(n)}\right\}$. By Proposition \ref{pro_pasouno} and the fact that both lengths $|\widehat{\Delta}_n^{-1}|$ and $|\Delta_1|$ are comparable with $J_n$, we have that both intervals, $\Delta_1$ and $\widehat{\Delta}_n^{-1}$, satisfy inequality \eqref{eq:pasotres} for $\widetilde{C}=2C_2 C$ and $  \widetilde{\lambda} = \lambda_2$. Let $M>0$ given by Theorem \ref{thm:Mcontrolledmcm} and let $L=L(M)>0$ given by Lemma \ref{lemma:tubularcoordinates}. By Proposition \ref{Pro_pasodos} (with $m=n$ for $I=\Delta_1$ and  $I=f^{-q_{n+1}}(\widehat{\Delta}_n^{-1})$), iterating the map $\mathcal{R}^nf$ exactly $L$ times  forward from $\Delta_1$ and  $L$ times backward from $\widehat{\Delta}_n^{-1}$, we have for $1 \leq j \leq L$,
	\begin{equation*}
		\left| \log   \frac{|B_{n,g} (h(\Delta_1^j))|}{|B_{n,f}(\Delta_1^j)|} \right| \leq 2(L+1) C_3 C\, \lambda_2^n
		\text{ \ \ and \ }
		\left| \log   \frac{|B_{n,g} (h(\widehat{\Delta}_n^{-j}))|}{|B_{n,f}(\widehat{\Delta}_n^{-j})|} \right| \leq  2(L+1) C_3 C\, \lambda_2^n .
	\end{equation*}
 If $r(n) \leq 2L$ the we get the lemma for each $I \in \left\{ \{ \widehat{\Delta}_n^{-j} \}_{j=2}^{j=r(n) - 1}, \,  \{ \Delta_{1}^{j} \}_{j=1}^{j=r(n)-1} \right\}$ and we are done. 
If $r(n)>2L$ then we iterate $P=\lceil{\lambda_2^{-n/2}}\rceil -2$ times (forward and also backward) and by Proposition \ref{Pro_pasodos} we obtain, for  $1 \leq j \leq L + P$ 
\begin{equation}\label{eq:iniciointervalosn}
	\left| \log   \frac{|B_{n,g} (h(\Delta_1^j))|}{|B_{n,f}(\Delta_1^j)|} \right| \leq 2(L+1) C_3 C \, \lambda_2^{n/2}
 \text{ \ \ and \ }
	\left| \log   \frac{|B_{n,g} (h(\widehat{\Delta}_n^{-j}))|}{|B_{n,f}(\widehat{\Delta}_n^{-j})|} \right| \leq 2(L+1) C_3 C\, \lambda_2^{n/2}.
\end{equation}
 If $r(n)\leq 2( L+ P)$, then we get the lemma for $I \in \left\{ \{ \widehat{\Delta}_n^{-j} \}_{j=2}^{j=r(n) - 1}, \,  \{ \Delta_{1}^{j} \}_{j=1}^{j=r(n)-1} \right\}$ and we are done.


 Otherwise, if $r(n) > 2(L+P)$ then the intervals $\Delta_1^j$ and $\widehat{\Delta}_n^{-j}$, for  $L +P \leq j$, are all inside the tubular $M_{n,2L}^f$ whose center $z_n$ is defined. 
 At this point we change of coordinates and consider intervals whose endpoints are given by  the tubular-coordinates (see Definition \ref{def:tubularcoordinates}), i.e. let $\mathcal{F}_n=\varphi \circ \mathcal{R}^n_f \circ \varphi^{-1}$, where $\varphi(x)=D^{2}\mathcal{R}^n_f(z_n)(x-z_n)/2$, and take $|\mathcal{I}_j|=|\mathcal{F}_n^{j+1}(\varphi(1))-\mathcal{F}_n^{j}(\varphi(1))|$ and $|\mathcal{J}_j| =| \mathcal{F}_n^{-j}(\phi(B_{n,f}(\mathfrak{c}_n))-
 \mathcal{F}_n^{-j-1}(\phi(B_{n,f}(\mathfrak{c}_n))|$. We also consider $\mathcal{G}_n=\psi \circ \mathcal{R}^n_g \circ \psi^{-1}$, where $\psi(x)=D^{2}\mathcal{R}^n_g(h(z_n))(x-h(z_n))/2$.
 \begin{claim}\label{claimI}
 For $i \geq L+P$ we have
  		\begin{equation*}
 			\left| \log  \frac{|B_{n,g} (h(\Delta_1^i))|}{|B_{n,f}(\Delta_1^i)|} - 
 			\log  \frac{|h(\mathcal{I}_i)|}{|\mathcal{I}_i|}
 			\right| \leq 2CM^2\, \lambda^n \ \ \text{and} \ \ \left| \log  \frac{|B_{n,g} (h(\widehat{\Delta}_n^{-i}))|}{|B_{n,f}(\widehat{\Delta}_n^{-i})|} - \log  \frac{|h(\mathcal{J}_i)|}{|\mathcal{J}_i|}
 			\right| \leq 2CM^2\, \lambda^n.
 		\end{equation*}
 \end{claim}
 \begin{proof}[Proof of Claim \ref{claimI}] By definition of $\psi$ and $\phi$, Theorem \ref{thm:Mcontrolledmcm}, Mean Value Theorem and inequality \eqref{inq:estimativacentrofunnel} we obtain:
 	\begin{align*}
 	\left| \log  \frac{|B_{n,g} (h(\Delta_1^i))|}{|B_{n,f}(\Delta_1^i)|} - 
 	\log  \frac{|h(\mathcal{I}_i)|}{|\mathcal{I}_i|}
 	\right| 
 	&= \left| \log  \frac{|(\mathcal{R}^ng)^{i}(1) - (\mathcal{R}^ng)^{i+1}(1)|}{|(\mathcal{R}^nf)^{i}(1) - (\mathcal{R}^nf)^{i+1}(1)|} - 
 	 \log  \frac{|\psi(\mathcal{R}^ng)^{i}(1) - \psi (\mathcal{R}^ng)^{i+1}(1)|}{|\phi(\mathcal{R}^nf)^{i}(1) - \phi(\mathcal{R}^nf)^{i+1}(1)|}
 	\right| \\
 	&= \left| \log  \frac{|(\mathcal{R}^ng)^{i}(1) - (\mathcal{R}^ng)^{i+1}(1)|}{|\psi(\mathcal{R}^ng)^{i}(1) - \psi(\mathcal{R}^ng)^{i+1}(1)|} - 
 	\log  \frac{|(\mathcal{R}^nf)^{i}(1) -  (\mathcal{R}^nf)^{i+1}(1)|}{|\phi(\mathcal{R}^nf)^{i}(1) - \phi(\mathcal{R}^nf)^{i+1}(1)|}
 	\right|\\
 	&= \left| \log \frac{D^2\mathcal{R}^nf(z_n)}{D^2\mathcal{R}^ng(h(z_n))}
 	\right|\\
 	&\leq M\, \left|(D^2\mathcal{R}^ng)(h(z_n))- (D^2\mathcal{R}^nf)(z_n) \right|\\
 	&\leq M\, |h(z_n) -z_n| \, | (D^2\mathcal{R}^nf)(\beta) + (D^2\mathcal{R}^ng)(\alpha) |	\\
 	&\leq 2M^2C \, \lambda^n.
 	\end{align*}
Analogously,
	\begin{align*}
	\left| \frac{|B_{n,g} (h(\widehat{\Delta}_n^{-i}))|}{|B_{n,f}(\widehat{\Delta}_n^{-i})|} - 
	\log  \frac{|h(\mathcal{J}_i)|}{|\mathcal{J}_i|}
	\right| 
		= \left| \log \frac{D^2\mathcal{R}^nf(z_n)}{D^2\mathcal{R}^ng(h(z_n))}
		\right| \leq 2M^2C \, \lambda^n.
	\end{align*}
 	\end{proof}

 By Claim \ref{claimI} we just need to prove that there exists a universal constant $\widetilde{C}_1>0$ such that for $i \geq L+P$
  \begin{equation}\label{eq:intervalosnuevosn}
  	\left| 	\log  \frac{|h(\mathcal{I}_i)|}{|\mathcal{I}_i|}
  	\right| \leq \widetilde{C}_1\, \lambda^n \ \ \text{and} \ \	\left| 	\log  \frac{|h(\mathcal{J}_i)|}{|\mathcal{J}_i|}
  	\right| \leq \widetilde{C}_1\, \lambda^n.
  \end{equation}
Using inequality \eqref{eq_funnelsonsecutive} with $s_0=\mathcal{F}_n^{L+P}(\phi(1)) \in (0,1)$, we obtain for $L+P \leq i \leq \max \{i_{r}, j_{r}\}$
\begin{equation}\label{eq:FunnelFmGm}
\frac{h(s_i)- h(s_{i+1})}{s_i-s_{i+1}} = \frac{ \left(i-(L+P)+ (\mathcal{F}_n^{L+P}(\phi(1)))^{-1}  \right)^{2}}{ \left( i-(L+P)+ (\mathcal{G}_n^{L+P}(h(\phi(1))))^{-1} \right)^{2} } \frac{1+B_i}{1+A_i}
\end{equation}
  where $A_i$ and $B_i$ correspond to the constant $\delta_i$ in Lemma \ref{lemaKTtunnel} for $\mathcal{F}_n$ and $\mathcal{G}_n$, respectively. Therefore,  $|A_i| \leq D_2\,s_0$ and $|B_i|\leq D_2h(s_0)$. In fact, by Claim \ref{claimP},  $|A_i|,|B_i| \leq D_2\, \lambda_*^{n/2}$, for $\lambda_*=\max\{\lambda, \lambda^{1/3} \}$.\\
  
  Using that $|\log (a+1/b+1)  \leq |\log (a/b)|$ for $a,b>0$ we have  from equation \eqref{eq:FunnelFmGm}
  \begin{align*}
  	\log \left( \frac{1+A_i}{1+B_i} \ \frac{h(s_i)- h(s_{i+1})}{s_i-s_{i+1}} \right) &= 2 \log \left( \frac{ i-(L+P)+ (\mathcal{F}_n^{L+P}(\phi(1)))^{-1} }{  i-(L+P)+ (\mathcal{G}_n^{L+P}(h(\phi(1))))^{-1} } \right)  \\
  &=2 \log \left(
  		\frac{ 1+(i-(L+P)-1) + (\mathcal{F}_n^{L+P}(\phi(1)))^{-1} }{1+( i-(L+P)-1)+ (\mathcal{G}_n^{L+P}(h(\phi(1))))^{-1}} \right) \\
  &\leq 2 \log \left(
  		\frac{ i-(L+P)-1 + (\mathcal{F}_n^{L+P}(\phi(1)))^{-1} }{ i-(L+P)-1+ (\mathcal{G}_n^{L+P}(h(\phi(1))))^{-1}} \right)  \\
  		&\leq \cdots \leq 2 \log \left( \frac{\mathcal{G}_n^{L+P}(h(\phi(1))}{\mathcal{F}_n^{L+P}(\phi(1))} \right) \\ 
  		&=2 \log\left( \frac{h(s_{L+P}) - h(s_{L+P+1})}{s_{L+P}+ s_{L+P+1}} \, \frac{1+A_{L+P}}{1+B_{L+P}} \right).
  \end{align*}
  Therefore, for $L+P \leq i \leq \max\{i_r,j_r\}$
  \begin{equation*}
  	\left| 	\log  \frac{|h(\mathcal{I}_i)|}{|\mathcal{I}_i|}
  	\right| = \left| \log \frac{|h(s_i)- h(s_{i+1})|}{|s_i-s_{i+1}|} \right| \leq 4(L+1)C_3C\lambda_*^{n/2} + D_2 \lambda_*^{n/2}
  	\leq \widetilde{C}_2\,\lambda_*^{n/2}.
  \end{equation*}
 Analogously, this time for $\mathcal{F}_n^{-1}$ and $\mathcal{G}_n^{-1}$, we have for $\min\{i_{\ell},j_{\ell}\}  \leq i \leq L+P$
  \begin{equation*}
  	\left| 	\log  \frac{|h(\mathcal{I}_i)|}{|\mathcal{I}_i|}
  	\right| \leq \widetilde{C}_2\,\lambda_*^{n/2}.
  \end{equation*}
  Moreover, using inequality \eqref{eq_funnelsonsecutive} for $\mathcal{F}_n^{i}(\phi(B_{n,f}(\mathfrak{c}_n)))$  we obtain for any $\min\{\widehat{i_{\ell}},\widehat{j_{\ell}}\}  \leq i \leq \max \{\widehat{i_{r}}, \widehat{j_{r}}\}$ 
   \begin{equation*}
  	\left| 	\log  \frac{|h(\mathcal{J}_i)|}{|\mathcal{J}_i|}
  	\right| \leq \widetilde{C}_2\,\lambda_*^{n/2}.
  \end{equation*}
  
  Now, for the intervals inside the Tunnel, we will use Lemma \ref{lemaKTtunnel}, with  $s_0 = \mathcal{F}_n^{i_c}(\phi(1))$ with $s_k= \mathcal{F}_n^{i_c+k}(\phi(1))$, for $0 \leq k \leq i_{\ell}-i_c$, and also for $s_0 = \mathcal{G}_n^{j_c}(\psi(1))$ with $s_j= \mathcal{G}_n^{j_c+j}(\psi(1))$, for $0 \leq j \leq j_{\ell}-j_c$   By Equation \eqref{eq_tunnelconsecutive} in  Lemma \ref{lemaKTtunnel} and Lemma  \ref{lemma:estimatesparameterstubular} we have,
  for $i_c \leq i \leq \min \{ i_{\ell}, j_{\ell}\}$
 {\small
  \begin{align*}
  	\left| \log  \frac{|h(\mathcal{I}_{i})|}{|\mathcal{I}_{i}|} \right| 
  	&\leq 
  	\left| \log\frac{|h(\mathcal{I}_{i})|}{|\mathcal{I}_{i}|} - 	\log
  	\frac{|\mathcal{G}_n^{j_c+i-i_c+1}(\psi(1))-\mathcal{G}_n^{j_c+i-i_c}(\psi(1)))| }{|\mathcal{F}_n^{i+1}(\phi(1))- \mathcal{F}_n^{i}(\phi(1))|}   	\right| 
  	+ \left|\log
  	\frac{|\mathcal{G}_n^{j_c+i-i_c+1}(\psi(1))-\mathcal{G}_n^{j_c+i-i_c}(\psi(1)))| }{|\mathcal{F}_n^{i+1}(\phi(1))- \mathcal{F}_n^{i}(\phi(1))|} \right|\\
  	&\leq \left| \sum_{m=i}^{j_c-i_c+i} \log \frac{ |\mathcal{G}_n(h(s_m))-\mathcal{G}_n(h(s_{m+1}))| }{|h(s_m)-h(s_{m+1})|}
  	\right| + \left| \log \frac{|h(\varepsilon_n)(1+\delta_{i+j_c-i_c})(\cos \sqrt{\varepsilon_n}(i-i_c))^2|}{|\varepsilon_n(1+\delta_i) (\cos \sqrt{h(\varepsilon_n)}(i-i_c))^2|} \right|
  	\\
  	&\leq |j_c-i_c| \varepsilon_n^{\frac{1}{2+\alpha}} +  
  	\left| \log \frac{h(\varepsilon_n)(1+\delta_{i+jc-i_c})}{\varepsilon_n(1+\delta_i)} \right| + \left|2 \log \frac{\cos \sqrt{\varepsilon_n}(i-i_c)}{\cos \sqrt{h(\varepsilon_n)}(i-i_c)} \right|\\
  	&\leq D_5\varepsilon_n^{\frac{\alpha(\alpha+1)}{2(2+\alpha)}} +  D_5\varepsilon_n^{\alpha/2} + 2D_4\,\varepsilon_n^{\frac{\alpha(\alpha+1)}{2(2+\alpha)}} + \left|2 \log \frac{\cos \sqrt{\varepsilon_n}(i_{\ell})}{\cos \sqrt{h(\varepsilon_n)}(i_{\ell})} \right| \\
  	&\leq D_6\,\lambda_*^{\frac{n\alpha(\alpha +1)}{4(2+\alpha)}}  +\widetilde{C}_2\,\lambda_*^{n/2}\\
  	&\leq \widetilde{C}_4\,\lambda_*^{n/2},
  \end{align*}
}
where $D_6=(D_5, M, \alpha)>0$.  Analogously we obtain a similar inequality for $\max\{i_r,j_r\} \leq i \leq i_{c}$. Therefore, we have for  $ \max\{i_r,j_r\} \leq i \leq \min \{ i_{\ell}, j_{\ell}\}$   
  \begin{align*}
  	\left| 	\log  \frac{|h(\mathcal{I}_i)|}{|\mathcal{I}_i|}
  	\right| 
  	\leq \widetilde{C}_4\, \lambda_*^{n/2}.
  \end{align*}
  where $\widetilde{C}_4= \widetilde{C}_4(L,M, D_4,D_5,C, C_3)>0$.

Analogously, we obtain the same estimates for $	\left| 	\log  \frac{|h(\mathcal{J}_i)|}{|\mathcal{J}_i|}
\right|$ where  $ \min\{ \widehat{i_{\ell}}, \widehat{j_{\ell}} \} \leq i\leq \max \{ \widehat{i_r}, \widehat{j_r} \}$. Note also that by the estimatives above and Remark \ref{rmk:comparabilitytwopartions} we have the proposition also for  $I=\Delta_R^{n+1}$.
Finally, we obtain the result putting all previous inequalities together and choosing  $\lambda_3=\max\{ \lambda_2 ^{1/2},\lambda^{1/2}, \lambda_*^{n/2} \}$ and $C_4>0$ depending on $L,M,D_2,C,C_3, D_4$ and $D_5$.
\end{proof}

The next result is an inductive step between $J_m$ and $J_{m+1}$.

\begin{lemma}\label{lema_pasocuatro} There exists $C_5=C_5(M,D_1,C_3) > 1$ and $\lambda_5 \in (0,1)$ such that for all $(1-b_1)n \leq m \leq n-1$, $I \in \widehat{\mathcal{P}}_{n+1}$ and $I \subset J_{m} $
	\begin{align*}
		\left| \log \frac{|B_{m,g} (h(I))|}{|B_{m,f} (I)|} \right| \leq C_5 \left(\max_{I  \in \widehat{\mathcal{P}}_{n},
			I \subset J_{m}}  \left| \log \frac{|B_{m,g} (h(I))|}{|B_{m,f} (I)|} \right| +
		\max_{I  \in \widehat{\mathcal{P}}_{n+1},
			I \subset J_{m+1}}  \left| \log \frac{|B_{m+1,g} (h(I))|}{|B_{m+1,f} (I)|} \right|
		  + \lambda_2^{m/4} \right)
	\end{align*}
\end{lemma}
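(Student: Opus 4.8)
The plan is to run a case analysis on the position inside $J_m$ of an atom $I\in\widehat{\mathcal{P}}_{n+1}$, reducing each case to one of the three quantities on the right-hand side of the statement, which play the role of inductive inputs (the $\widehat{\mathcal{P}}_{n+1}$-maximum over the smaller domain $J_{m+1}$, the $\widehat{\mathcal{P}}_n$-maximum over $J_m$, and an error $O(\lambda_2^{m/4})$). Throughout, $M$ is the constant of Theorem~\ref{thm:Mcontrolledmcm} and $L=L(M)$ the one provided by Lemma~\ref{lema_pasocero}. The first, easy case is $I\subset J_{m+1}$: by the affine relation between $B_{m,f}$ and $B_{m+1,f}$ on $J_{m+1}$ used in the proof of Lemma~\ref{lema_pasocero} (and its analogue for $g$),
\[
\log\frac{|B_{m,g}(h(I))|}{|B_{m,f}(I)|}=\log\frac{|\mathcal{R}^m g(0)|}{|\mathcal{R}^m f(0)|}+\log\frac{|B_{m+1,g}(h(I))|}{|B_{m+1,f}(I)|};
\]
real bounds (Theorem~\ref{teobeau}) keep $|\mathcal{R}^m f(0)|$ away from $0$ and the exponential convergence of the renormalizations bounds the first term by $C\lambda^m\le C\lambda_2^{m/4}$, while the second term is at most the $J_{m+1}$-maximum appearing in the statement. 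In particular $I_{m+2}$, and every atom contained in it, is covered here.

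There remain the atoms with $I\subset I_m\setminus(I_{m+1}\cup I_{m+2})$, that is, $I\subset J_m\setminus J_{m+1}$. From the construction of $\widehat{\mathcal{P}}_{n+1}$ inside $I_m$, in both alternatives of the definition (whether or not $m$ is a two-bridges level), every such atom has the form $f^{kq_{m+1}+\sigma q_m}(\widetilde I)$ with $\sigma\in\{0,1\}$, $|k|\le a_{m+1}$, where $\widetilde I\in\widehat{\mathcal{P}}_{n+1}$ is contained either in $I_{m+1}\subset J_{m+1}$ or in the bridge $\widehat\Delta_m=[f^{q_{m+1}}(\mathfrak c_m),\mathfrak c_m]$ (the composite atoms $\Delta_L^{m+1},\Delta_R^{m+1}$ being treated by splitting them into their two pieces and recombining with Remark~\ref{rmk:comparabilitytwopartions}). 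For $\widetilde I\subset I_{m+1}$ the first case controls $\log\frac{|B_{m+1,g}(h(\widetilde I))|}{|B_{m+1,f}(\widetilde I)|}$. For $\widetilde I\subset\widehat\Delta_m$ I would either absorb $\widetilde I$ into the $\widehat{\mathcal{P}}_n|_{J_m}$ term, when $\widetilde I$ already belongs to $\widehat{\mathcal{P}}_n$, or push it forward by the branch $f^{s}$, $s<q_{m+1}$, that carries $\widehat\Delta_m$ diffeomorphically onto $I_{m+1}(c_1)$: by the decomposition of Lemma~\ref{lemma_decompositiontwobridge} and the Koebe principle this branch and its $g$-counterpart have bounded distortion and are exponentially close, so one is reduced to an atom inside $J_{m+1}(c_1)$, handled by the $c_1$-version of the statement. (The whole scheme runs simultaneously at both critical points; this is why the coarser-level term $\widehat{\mathcal{P}}_n|_{J_m}$ has to be present.)

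It remains, and this is the heart of the matter, to bound the variation of $\log\frac{|B_{m,g}(h(\cdot))|}{|B_{m,f}(\cdot)|}$ along the orbit $\widetilde I\mapsto f^{kq_{m+1}+\sigma q_m}(\widetilde I)$ when $|k|$ is as large as $a_{m+1}$, which is unbounded. A naive iteration of Proposition~\ref{Pro_pasodos} and Corollary~\ref{coro:Pro_pasodos} only yields an error $\sim a_{m+1}C_3\lambda_2^{n/2}$, which is useless. Instead I would transcribe, one scale lower, the argument in the proof of Proposition~\ref{Pro_pasotres}: apply Proposition~\ref{Pro_pasodos} only a bounded number $2L$ of times from $\Delta_1$ forward and from $\widehat\Delta_m^{-1}$ backward, after which the relevant atoms enter the tubular set $M_{m,2L}^f$ whose center $z_m$ is defined; then pass to the tubular coordinates $\mathcal{F}_m,\mathcal{G}_m$ of Definition~\ref{def:tubularcoordinates} and replace the product of single-step distortions by the telescoping estimates of Lemma~\ref{lemaKTfunnel} (in the funnel) and Lemma~\ref{lemaKTtunnel} (in the tunnel), combined with Lemma~\ref{lemma:estimatesparameterstubular} and the $C^2$-closeness $|h(z_m)-z_m|\le C\lambda^m$ and $|D^2\mathcal{R}^m f(z_m)-D^2\mathcal{R}^m g(h(z_m))|\le C\lambda^m$. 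Since $r(m)\bigl(z_m-\mathcal{R}^m f(z_m)\bigr)<1$ forces $|\varepsilon_m|\le|D^2\mathcal{R}^m f(z_m)|/(2r(m))$, and after the $P=\lceil\lambda_2^{-m/2}\rceil$ preliminary iterates (exactly as in Lemma~\ref{lema_pasocero}) one has $|\varepsilon_m|\lesssim\lambda_2^{m/2}$, the accumulated discrepancy inside the tubular region is $\lesssim\varepsilon_m^{\beta}$ for some positive $\beta=\beta(\alpha)$, hence $\le$ a constant times $\lambda_2^{m/4}$. Collecting the three cases gives the inequality with $\lambda_5$ a suitable root of $\max\{\lambda,\lambda_2\}$ and $C_5$ depending on $M$, $D_1$ and $C_3$ (and the Koebe constant).

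The step I expect to be the main obstacle is precisely this tubular estimate for the bridge atoms $\widehat\Delta_m^{\,j}$: one must keep track at the same time of which $\mathcal{F}_m$- or $\mathcal{G}_m$-iterate a given atom corresponds to (the integer counts $i_\kappa$ and $j_\kappa$ attached to $f$ and $g$ do not coincide, cf.\ Lemma~\ref{lemma:estimatesparameterstubular}), of the length of the atom, and, because we are working in the $C^2$ topology, of the second derivative at the center, and show that all these discrepancies telescope rather than add up. This is exactly where the hypothesis of $C^2$-exponential convergence of the renormalizations is used in an essential way.
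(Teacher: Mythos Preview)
Your reduction for $I\subset J_{m+1}$, and your treatment of the ``seed'' atoms in $I_{m+1}$ and in $\widehat\Delta_m$ (including pushing $\widehat\Delta_m$ forward onto $I_{m+1}(c_1)$ via Lemma~\ref{lemma_decompositiontwobridge}) are correct and match the paper. The gap is in the tubular region, and it is tied to a misidentification of the role of the $\widehat{\mathcal{P}}_n|_{J_m}$ term.

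You propose to ``transcribe one scale lower'' the funnel/tunnel argument of Proposition~\ref{Pro_pasotres}. But that argument controls $\bigl|\log\tfrac{|h(\mathcal I_i)|}{|\mathcal I_i|}\bigr|$ for the intervals $\mathcal I_i$ between \emph{consecutive iterates} of $\mathcal F_m$: these are $\widehat{\mathcal{P}}_{m+1}$-scale atoms, not the finer $\widehat{\mathcal{P}}_{n+1}$-atoms you must control. Attempting instead to telescope the single-step change of $\log\tfrac{|B_{m,g}(h(I))|}{|B_{m,f}(I)|}$ along the orbit runs into the $CM\lambda^m$ contribution in case~(b) of Proposition~\ref{Pro_pasodos} (coming from $|\log D\mathcal R^mg-\log D\mathcal R^mf|$): this term does \emph{not} depend on the length of $I$, so over $\sim r(m)$ steps it accumulates to $r(m)\lambda^m$, and there is no a~priori relation between $r(m)$ and $\lambda^{-m}$ that makes this small.

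The paper's device is different and is precisely what the $\widehat{\mathcal{P}}_n|_{J_m}$ term is for. For $I_i\in\widehat{\mathcal{P}}_{n+1}$ deep in the tubular, let $J_i\in\widehat{\mathcal{P}}_n$ be the atom containing it and write
\[
\Bigl|\log\tfrac{|B_{m,g}(h(I_i))|}{|B_{m,f}(I_i)|}\Bigr|\ \le\ \Bigl|\log\tfrac{|B_{m,g}(h(J_i))|}{|B_{m,f}(J_i)|}-\log\tfrac{|B_{m,g}(h(I_i))|}{|B_{m,f}(I_i)|}\Bigr|\ +\ \max_{I\in\widehat{\mathcal{P}}_n,\,I\subset J_m}\Bigl|\log\tfrac{|B_{m,g}(h(I))|}{|B_{m,f}(I)|}\Bigr|.
\]
The point of telescoping the \emph{difference} on the right is that its one-step change is a pure nonlinearity term, with no $f$-vs-$g$ comparison: by the mean value theorem it is at most $M^2|B_{m,f}(J_i)|$ (here it matters that the free critical point cannot lie strictly between the two mean-value points, a feature of the two-bridges partition). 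Summing over the orbit gives $M^3$ times the \emph{total length} of the $J_i$'s in tubular coordinates, which collapses to the diameter of the tubular region, i.e.\ to $O(\lambda^{m/2})$ by Claim~\ref{claimP}/Lemma~\ref{lemaKTfunnel}. At the far end of the iteration one lands next to an atom already handled by your seed estimates, and Remark~\ref{rmk:comparabilitytwopartions} closes the loop. So the coarser-level maximum is used \emph{in the tubular}, not (as you suggest) to mediate between the two critical points; the $c_1$-reduction in Claim~\ref{claim:mm+1Delta} needs only bounded distortion.
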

\begin{proof}[Proof of Lemma \ref{lema_pasocuatro}]
	If $I \in \widehat{\mathcal{P}}_{n+1}$ and $I \subset J_{m+1}$ then
	\begin{equation*}
		\left| \log \frac{|B_{m,g} (h(I))|}{|B_{m,f} (I)|} \right| 
		\, = \, \left| \log \frac{|B_{m+1,g} (h(I))|}{|B_{m+1,f} (I)|} + \log \frac{|\mathcal{R}^mg(0)|}{|\mathcal{R}^mf(0)|} \right| \, \leq \, \left| \log \frac{|B_{m+1,g} (h(I))|}{|B_{m+1,f} (I)|} \right|  +  C\, \lambda^{m},
	\end{equation*} 
and hence the lemma follows for these intervals. We just need to prove the lemma for $I \in \widehat{\mathcal{P}}_{n+1}$ and $I \subset J_{m} \setminus J_{m+1}$. Without loss of generality let us assume also that $I \subset [\mathfrak{c}_m, f^{q_m}(c_0)]$. By simplicity, let us call $\widehat{\Delta} \in \widehat{\mathcal{P}}_{m+1}$ the interval with endpoints given by  $f^{q_m}(\mathfrak{c}_m)$ and $\mathfrak{c}_m$. 
\begin{claim}\label{claim:mm+1Delta}
	For any $I \in \widehat{\mathcal{P}}_{n+1}$ with $I \subset \widehat{\Delta}$ we have
	\begin{equation}\label{ineq_mm+1claim}
			\left| \log \frac{|B_{m,g} (h(I))|}{|B_{m,f} (I)|} \right| \leq
		\left| \log \frac{|B_{m+1,g} (h(I))|}{|B_{m+1,f} (I)|} \right|
		+ C\lambda^{m}.
	\end{equation}
\end{claim}
\begin{proof}[Proof of Claim \ref{claim:mm+1Delta}]
Let $j < q_{m+2}$ be such that $f^{j}(\widehat{\Delta})=I_{m+1}(c_1)$, that is the fundamental interval with endpoints $f^{q_{m+1}}(c_1)$ and $c_1$. If $I \in \widehat{\mathcal{P}}_{n+1}$ with $I \subset I_{m+1}(c_1)$ then
    \begin{equation}\label{ineq:m1m+11}
    	\left| \log \frac{|B_{m,g,1} (h(I))|}{|B_{m,f,1} (I)|} \right| =  \left| \log \frac{|B_{m+1,g,1} (h(I))| |\mathcal{R}^mg(0)|}{|B_{m+1,f,1} (I)||\mathcal{R}^mf(0)|} \right|
    	\leq 
    	\left| \log \frac{|B_{m+1,g,1} (h(I))|}{|B_{m+1,f,1} (I)|} \right|
    	+ C\lambda^{m},
    \end{equation}
    where $B_{m+1,g,1}(t)=(-1)^n |t-h(c_1)|/|h(I_{m+1}(c_1))|$. 
    Let $\widehat{I} \in \widehat{\mathcal{P}}_{n+1}$, $\widehat{I} =[v,w] \subset \widehat{\Delta}$. By Lemma \ref{lemma_decompositiontwobridge}, for any $\varepsilon>0$, we have that 
\begin{equation*}
	\sup_{x,y \in h(\widehat{\Delta})} \frac{|Dg^j(x)|}{|Dg^j(y)|} \leq 1+ \varepsilon,
\end{equation*}
which implies
    \begin{equation*}
    	\frac{|g^{j}(g^{q_{m+1}}(h(\mathfrak{c}_1))-g^{j}(h(\mathfrak{c}_1))|}{|g^{q_{m+1}}(h(\mathfrak{c}_1))-h(\mathfrak{c}_1)|}
    		\leq (1+ \varepsilon) \frac{|g^{j}(h(w))-g^{j}(h(v))|}{|h(v) -h(w)|}. 
    \end{equation*}
These two estimates imply that:    
    \begin{align*}
    	\frac{|B_{m,g} (h(\widehat{I}))|}{|B_{m,f} (\widehat{I})|} 
    	\leq   \frac{\frac{|h(w)-h(v)|}{|g^{q_{m+1}}(h(\mathfrak{c}_1))-h(\mathfrak{c}_1))|}}{\frac{|w-v|}{|f^{q_{m+1}}(\mathfrak{c}_1) -\mathfrak{c}_1|}}  \leq 
    	(1+\varepsilon)^2\, 
    	\frac{\frac{|g^{j}(h(w))-g^{j}(h(v))|}{|g^{q_{m+1}}(h(c_1))-h(c_1)|}}{\frac{|f^{j}(w)-f^{j}(v)|}{|f^{q_{m+1}}(c_1) -c_1|}} \leq (1+\varepsilon)^2 \, \frac{|B_{m,g,1} (g^{j}(h(\widehat{I}))|}{|B_{m,f,1} (f^{j}(\widehat{I}))|}.
    \end{align*}
 In a similar way we obtain
 \begin{align*}
	\left| \log \frac{|B_{m+1,g,1} (g^{j}(h(\widehat{I}))|}{|B_{m+1,f,1} (f^{j}(\widehat{I}))|} \right|
	\leq \left| \log \frac{|B_{m+1,g} (h(\widehat{I}))|}{|B_{m+1,f} (\widehat{I})|} \right|.
\end{align*}
 From \eqref{ineq:m1m+11} and $f^j(\widehat{I}) \subset I_{m+1}(c_1)$ we  obtain
\begin{equation*}
	\left| \log \frac{|B_{m,g} (h(\widehat{I}))|}{|B_{m,f} (\widehat{I})|} \right|  \leq  \left| \log \frac{|B_{m,g,1} (g^{j}(h(\widehat{I})))|}{|B_{m,f,1} (f^{j}(\widehat{I}))|} \right| \leq  
	\left| \log \frac{|B_{m+1,g,1} (g^{j}h(\widehat{I}))|}{|B_{m+1,f,1} (f^{j}(\widehat{I}))|} \right|
	+ C\lambda^{m} \leq \left| \log \frac{|B_{m+1,g} (h(\widehat{I}))|}{|B_{m+1,f} (\widehat{I})|} \right| + C\lambda^{m}.
\end{equation*}
    Hence,  any interval  $I \in \widehat{\mathcal{P}}_{n+1}$ contained in $\widehat{\Delta}$ satisfies \eqref{ineq_mm+1claim}.
    \end{proof}
     On one hand, by Claim \ref{claim:mm+1Delta} and Proposition \ref{Pro_pasodos},   any interval  $I \in \widehat{\mathcal{P}}_{n+1}$ is contained in $f^{-q_{m+1}}(\widehat{\Delta})$ satisfies inequality \eqref{ineq_mm+1claim}. Let  $\Delta \in \mathcal{P}_{m+1}$ with $\Delta \subset\widehat{\Delta} \cup f^{-q_{m+1}}(\widehat{\Delta})$, then any interval  $I \in \widehat{\mathcal{P}}_{n+1}$ contained in $\Delta$ also satisfies inequality \eqref{ineq_mm+1claim}.  On the other hand, by Corollary \ref{coro:Pro_pasodos}, any interval $I \in \widehat{\mathcal{P}}_{n+1}$ contained in $f^{q_m}(I_{m+1})$ satisfies inequality \eqref{ineq_mm+1claim}. This implies that  any $I \in \widehat{\mathcal{P}}_{n+1}$ contained in $f^{q_{m+1}+q_m}(I_{m+1}) \cup f^{q_m}(I_{m+1})$ satisfies inequality \eqref{ineq_mm+1claim}.  Now let  $P=\lceil{\lambda^{-m/4}}\rceil$, we iterate $P+1$ times forward and backward the  map  $f^{q_{m+1}}$ from the intervals $f^{q_m}(I_{m+1})$ and $\widehat{\Delta}$, respectively.
    By Proposition \ref{Pro_pasodos}, for any  $I \in \widehat{\mathcal{P}}_{n+1}$ contained in $\Delta$ and for any $1 \leq j \leq P+1$ we have
    	\begin{equation}\label{ineq:hatDelta}
    		\left| \log \frac{|B_{m,g} (g^{-jq_{m+1}}(h(I)))|}{|B_{m,f} (f^{-jq_{m+1}}(I))|} \right|  \\
    		\leq
    		2C_3 \lambda^{m/4} +  \left| \log \frac{|B_{m+1,g} (h(I))|}{|B_{m+1,f} (I)|} \right| .
    \end{equation} 
  Moreover,  for any $I \in \widehat{\mathcal{P}}_{n+1}$ contained in $f^{q_m}(I_{m+1})$, for any $1 \leq j \leq P+1$ we have
	\begin{equation}\label{ineq:Delta}
		\left| \log \frac{|B_{m,g} (g^{jq_{m+1}}(h(I)))|}{|B_{m,f} (f^{jq_{m+1}}(I))|} \right| \leq
		2C_3 \lambda^{m/4} +  \left| \log \frac{|B_{m+1,g} (h(I))|}{|B_{m+1,f} (I)|} \right| .
	\end{equation} 
    If $r(m) \leq \max\{2(P+1),2L\}$ then we have proved the lemma. If $r(m) > \max\{2(P+1),2L\}$, then the tubular $M_{m,2L}^f$ is not empty and its center $z_m$ is defined. We need to prove the lemma for  $I_i \in \widehat{\mathcal{P}}_{n+1}$ which either $I_i \subseteq f^{-iq_{m+1}}(\widehat{\Delta})$ or $I_i\subseteq f^{q_m+iq_{m+1}}(I_{m+1})$ for $P+1\leq i \leq r(m)- P-1$.
    Let $J_i$ be the unique interval in $\widehat{\mathcal{P}}_{n}$ with $I_i \subseteq J_i$. 
    Then
    \begin{multline}\label{ineq:nextlevelJiIi}
    	\left| \log \frac{|B_{m,g}(h(I_i)|}{|B_{m,f}(I_i)|} \right| \leq 
    	\left| \log \frac{|B_{m,g}(h(J_i))|}{|B_{m,f}(J_i)|} - \log \frac{|B_{m,g}(h(I_i))|}{|B_{m,f}(I_i)|} \right| + 	\left| \log \frac{|B_{m,g}(h(J_i))|}{|B_{m,f}(J_i)|} \right|\\
    	\leq 
    	\left| \log \frac{|B_{m,g}(h(J_i))|}{|B_{m,f}(J_i)|} - \log \frac{|B_{m,g}(h(I_i))|}{|B_{m,f}(I_i)|} \right| + 	  \max_{I \in \widehat{\mathcal{P}}_{n}, I \subset J_m} \left| \log \frac{|B_{m,g}(h(I))|}{|B_{m,f}(I)|} \right|. 
    	\hspace{1.3cm} 
    \end{multline}
    On the other hand, 
     \begin{multline}\label{ineq:JiIi}
    	\left| \log \frac{|B_{m,g}(h(J_i))|}{|B_{m,f}(J_i)|} - \frac{|B_{m,g}(h(I_i))|}{|B_{m,f}(I_i)|} \right|   \\
     \hspace{0.5cm}	\leq 	\left| \log \frac{|B_{m,g}(h(J_i))|}{|B_{m,f}(J_i)|} - \frac{|B_{m,g}(h(I_i))|}{|B_{m,f}(I_i)|} +  \log \frac{|B_{m,g}(g^{ q_{m+1}}(h(J_i))|}{|B_{m,f}f^{q_{m+1}}(J_i)|} - \log \frac{|B_{m,g}(g^{q_{m+1}}(h(I_i))|}{|B_{m,f}f^{q_{m+1}}(I_i)|}   \right|  \\ 
    	+    	\left| \log \frac{|B_{m,g}g^{q_{m+1}}(h(J_i))|}{|B_{m,f}f^{q_{m+1}}(J_i)|} - \log \frac{|B_{m,g}g^{q_{m+1}}(h(I_i))|}{|B_{m,f}f^{q_{m+1}}(I_i)|} \right|.
    \end{multline}
  
    By Mean Value Theorem there exists $\widetilde{\theta} \in B_{m,f}(J_i)$ and $\theta \in B_{m,f}(I_i)$   such that
    \begin{align*}
    	\left| \log \frac{|B_{m,f} (f^{q_{m+1}}(J_i))|}{|B_{m,f} (f^{q_{m+1}}(I_i))|} - \log \frac{|B_{m,f}(J_i)|}{|B_{m,f}(I_i)|} \right|
&= 	\left| \log \frac{|B_{m,f} (f^{q_{m+1}}(J_i))|}{|B_{m,f}(J_i)|} - \log \frac{|B_{m,f} (f^{q_{m+1}}(I_i))|}{|B_{m,f}(I_i)|} \right| \\
&\leq |\log D(\mathcal{R}^mf)(\theta)- \log D (\mathcal{R}^mf)(\widetilde{\theta}) | .
   \end{align*}
 Note that there are no critical points of $\mathcal{R}^mf$ between $\theta$ and $\widetilde{\theta}$ since, in the worse case scenario, $I_i$ and $J_i$ have  $B_{m,f}(\mathfrak{c}_m)$ as an endpoint in common and then $B_{m,f}(\mathfrak{c}_m)$ cannot be between $\theta$ and $\widetilde{\theta}$\footnote{Note that if we were using the classical dynamical partitions it could happen that $B_{m,f}(\mathfrak{c}_m)$ is between $\theta$ and $\widetilde{\theta}$.}. Using the Mean Value Theorem again there exists  $\widehat{\theta}$, between $\theta$ and $\widetilde{\theta}$,  such that
 {\small
 \begin{align*}
 	|\log D(\mathcal{R}^mf)(\theta)- \log D (\mathcal{R}^mf)(\widetilde{\theta}) | =  D\left(\log D(\mathcal{R}^mf) \right) (\widehat{\theta})| \, |\theta- \widetilde{\theta}| \leq \frac{|D^2(\mathcal{R}^mf)(\widehat{\theta})|}{|D(\mathcal{R}^mf)(\widehat{\theta})|} \, |B_{m,f}(J_i)|
 	\leq M^2 \, |B_{m,f}(J_i)|.
\end{align*}
}
If $J_i \subset [f^{q_m+(r(m)-1)q_{m+1}}(c_0), f^{q_m}(c_0)]$, then  $|B_{m,f}(J_i)| = \frac{M}{2}|\mathcal{F}_m^{i}(\phi^{-1}(1)) - \mathcal{F}_m^{i-1}(\phi^{-1}(1))|$ and hence  
\begin{equation*}
	\left| \log \frac{|B_{m,f} (f^{q_{m+1}}(J_i))|}{|B_{m,f} (f^{q_{m+1}}(I_i))|} - \log \frac{|B_{m,f}(J_i)|}{|B_{m,f}(I_i)|} \right|
	 \leq   \frac{M^3}{2} \ |\mathcal{F}_m^{i}(\phi^{-1}(1)) - \mathcal{F}_m^{i-1}(\phi^{-1}(1))|
	\end{equation*}
and analogously
\begin{equation*}
\left| \log \frac{|B_{m,g} (g^{q_{m+1}}h((J_i))|}{|B_{m,g}( g^{q_{m+1}}h(I_i))|} - \log \frac{|B_{m,g}h(J_i)|}{|B_{m,g}h(I_i)|} \right| \leq \frac{M^3}{2} \ |\mathcal{G}_m^{i}(\psi^{-1}(1)) - \mathcal{G}_m^{i-1}(\psi^{-1}(1))|.
\end{equation*}
Consequently inequality \eqref{ineq:JiIi} becomes
\begin{multline}\label{ineq:JiIiinduction}
	\left| \log \frac{|B_{m,g}(h(J_i))|}{|B_{m,f}(J_i)|} - \log \frac{|B_{m,g}(h(I_i))|}{|B_{m,f}(I_i)|} \right| \leq \frac{M^3}{2}\, \left( |\mathcal{G}_m^{i}(\psi(1)) - \mathcal{G}_m^{i-1}(\psi(1))| + |\mathcal{F}_m^{i}(\phi(1)) - \mathcal{F}_m^{i-1}(\phi(1))|\right) \\
	+ \hspace{0.2cm}	\left| \log \frac{|B_{m,g}g^{q_{m+1}}(h(J_i))|}{|B_{m,f}f^{q_{m+1}}(J_i)|} - \log \frac{|B_{m,g}g^{q_{m+1}}(h(I_i))|}{|B_{m,f}f^{q_{m+1}}(I_i)|} \right|.
\end{multline}
Applying inequality \eqref{ineq:JiIiinduction} inductively, we obtain for  $P+1\leq i \leq r(m)- P-1$,
\begin{multline}\label{ine:sumJiIi}
	\left| \log \frac{|B_{m,g}(h(J_i))|}{|B_{m,f}(J_i)|} - \log \frac{|B_{m,g}(h(I_i))|}{|B_{m,f}(I_i)|} \right| \\
	\hspace{-1.3cm} \leq \frac{M^3}{2}\, \left( \sum_{i=P+1}^{2r(m)-P-1} |\mathcal{G}_m^i(\psi(1)) - \mathcal{G}_m^{i+1}(\psi(1))| + \sum_{i=P+1}^{2r(m)-P-1}  |\mathcal{F}_m^i(\phi(1)) - \mathcal{F}_m^{i+1}(\phi(1))|\right) \\
	 \hspace{2cm}	 + \left| \log \frac{|B_{m,g}g^{(2r(m)-P-1)q_{m+1}}(h(J_i))|}{|B_{m,f}f^{(2r(m)-P-1)q_{m+1}}(J_i)|} - \log \frac{|B_{m,g}g^{(2r(m)-P-1)q_{m+1}}(h(I_i))|}{|B_{m,f}f^{(2r(m)-P-1)q_{m+1}}(I_i)|} \right| \\
	\hspace{-1.4cm}  \leq \frac{M^3}{2} \left(  | \mathcal{G}_m^{P-1}(\psi(1)) - \mathcal{G}_m^{(2r(m)-P-1)}(\psi(1)) | +  | \mathcal{F}_m^{P-1}(\phi(1)) - \mathcal{F}_m^{(2r(m)-P-1)}(\phi(1)) |   \right) \\
	\hspace{2cm} + \left| \log \frac{|B_{m,g}g^{(2r(m)-P-2)q_{m+1}}(h(J_i))|}{|B_{m,f}f^{(2r(m)-P-1)q_{m+1}}(J_i)|} \right| + \left| \log \frac{|B_{m,g}g^{(2r(m)-P-1)q_{m+1}}(h(I_i))|}{|B_{m,f}f^{(2r(m)-P-1)q_{m+1}}(I_i)|} \right|  \\
	\leq \frac{M^5(D_1+1)}{4} \lambda^{m/2} \ + \max_{I \in \widehat{\mathcal{P}}_n, I \subset J_m} \left| \log  \frac{|B_{m,g}(h(I)|}{|B_{m,f}(I)|} \right| + \left| \log \frac{|B_{m,g}g^{(2r(m)-P-1)q_{m+1}}(h(I_i))|}{|B_{m,f}f^{(2r(m)-P-1)q_{m+1}}(I_i)|} \right|. \hspace{0.5cm} 
\end{multline}
The last inequality follows from Lemma \ref{lemaKTfunnel}. \\
Now, let $L, R$ be the two adjacent intervals in $\widehat{\mathcal{P}}_{n+1}$ with $I_i \subset L \cup R$. These three intervals are all  comparable (see Remark \ref{rmk:comparabilitytwopartions} ) and  $R \subset f^{-(P+1)q_{m+1}}(\widehat{\Delta})$ satisfies inequality \eqref{ineq:Delta}, then we have 
\begin{equation}\label{ineq:IiwithI}
	 \left| \log \frac{|B_{m,g}g^{(2r(m)-P-1)q_{m+1}}(h(I_i))|}{|B_{m,f}f^{(2r(m)-P-1)q_{m+1}}(I_i)|} \right|   \leq 
	 \left| \log \frac{|B_{m,g}(h(R))|}{|B_{m,f}(R)|} \right| 
	 <   C_3 \lambda^{m/4} +
	  \left| \log \frac{|B_{m+1,g} (h(R))|}{|B_{m+1,f} (R)|} \right|.
\end{equation}
Finally, from \eqref{ine:sumJiIi} and \eqref{ineq:IiwithI} we have that inequality \ref{ineq:nextlevelJiIi} becomes
\begin{multline}\label{ine:finalJiIi}
	\left| \log \frac{|B_{m,g}(h(I_i))|}{|B_{m,f}(I_i)|}  \right| \leq  C_5 \left(\max_{I \in \widehat{\mathcal{P}}_n, I \subset J_m} \left| \log  \frac{|B_{m,g}(h(I)|}{|B_{m,f}(I)|} \right|
	+  \max_{I \in \widehat{\mathcal{P}}_{n+1}, I \subset J_{m+1}} \left| \log \frac{|B_{m+1,g} (h(I))|}{|B_{m+1,f}(I)|} \right| + \lambda_2^{m/4} \right),
\end{multline}
for $C_5= 2\max\{ \frac{M^{5}(D_1+1)}{4}, C_3\}>1$.\\
Analogously, if  $J_i \subset [f^{q_{m+1}}(\mathfrak{c}_m), f^{(-r(m)+1)q_{m+1}}(\mathfrak{c}_m)]$, we have the same estimate given by \eqref{ine:finalJiIi}.
	\end{proof}


Next we extend the result in Proposition \ref{Pro_pasotres} for intervals in $\widehat{\mathcal{P}}_{n+1}$   contained in $J_m$.

   \begin{proposition}\label{Pro_pasocuatro} Let $C_6>1$ be as in Proposition \ref{Pro_pasotres}. For each $\lambda_5 \in (\lambda_4, 1)$ , where $\lambda_4 = \max\{\lambda_3, \lambda, \lambda_2^{1/4}\}\in (0,1)$, there exist $b_2=b_2(\lambda_5) \in (0,b_1)$  such that for all $I \in \widehat{\mathcal{P}}_{n+1}$, with $I \subset J_{n - \lceil b_2 n \rceil}$,   we have
	\begin{equation*}
		\left| \log \frac{|h(I)|}{|I|}  \right| \leq C_6\, \lambda_5^{n}.
	\end{equation*}
\end{proposition}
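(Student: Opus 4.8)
The plan is to derive Proposition~\ref{Pro_pasocuatro} from Proposition~\ref{Pro_pasotres} by a descending induction on the depth of the fundamental domain, with Lemma~\ref{lema_pasocuatro} furnishing the passage from $J_{m+1}$ to $J_m$. For $I\in\widehat{\mathcal{P}}_{k}$ with $I\subset J_m$ put $\Phi_k(m):=\max\bigl|\log\tfrac{|B_{m,g}(h(I))|}{|B_{m,f}(I)|}\bigr|$, the maximum being over such $I$. This is the right quantity because $\bigl|\log\tfrac{|h(I)|}{|I|}\bigr|\le\Phi_k(m)+\bigl|\log\tfrac{|h(I_m)|}{|I_m|}\bigr|\le\Phi_k(m)+C\lambda^{m}$ by \eqref{eq:limitfundintervals}, and conversely Proposition~\ref{Pro_pasotres} applied at level $k-1$ (combined with the same conversion) gives the base estimate $\Phi_k(k-1)\le C_4\lambda_3^{\,k-1}+C\lambda^{k-1}$; while Lemma~\ref{lema_pasocuatro} applied at level $k-1$ gives, for $(1-b_1)(k-1)\le m\le k-2$, the recursion $\Phi_k(m)\le C_5\bigl(\Phi_{k-1}(m)+\Phi_k(m+1)+\lambda_2^{m/4}\bigr)$. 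Throughout write $\lambda_4=\max\{\lambda_3,\lambda,\lambda_2^{1/4}\}$, so that $\lambda_2^{m/4}$, $\lambda_3^{m}$ and $\lambda^{m}$ are all at most $\lambda_4^{m}$.

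The key step is to unroll this as a two--parameter induction. With $i=k-m$ and $C_*:=\max\{C_4+C,\,3C_5\}$, I claim $\Phi_k(k-i)\le C_*^{\,i}\lambda_4^{\,k-i}$ for all admissible $(k,i)$, by induction on $i$. For $i=1$ the base estimate yields $\Phi_k(k-1)\le C_4\lambda_3^{\,k-1}+C\lambda^{k-1}\le(C_4+C)\lambda_4^{\,k-1}\le C_*\lambda_4^{\,k-1}$. For $i\ge2$ the recursion at $m=k-i$ turns $\Phi_k(k-i)$ into $\Phi_{k-1}(k-i)=\Phi_{k-1}\bigl((k-1)-(i-1)\bigr)$ and $\Phi_k(k-i+1)=\Phi_k\bigl(k-(i-1)\bigr)$, both controlled by the inductive hypothesis at index $i-1$ by $C_*^{\,i-1}\lambda_4^{\,k-i}$ (using $\lambda_4\le1$ for the second), together with $\lambda_2^{(k-i)/4}\le\lambda_4^{\,k-i}$; hence $\Phi_k(k-i)\le C_5(2C_*^{\,i-1}+1)\lambda_4^{\,k-i}\le 3C_5\,C_*^{\,i-1}\lambda_4^{\,k-i}\le C_*^{\,i}\lambda_4^{\,k-i}$, closing the induction. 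The subtle point — and, morally, the whole content of the step — is that every descent in $k$ must be pursued recursively down to a \emph{Proposition~\ref{Pro_pasotres}} base case, which carries the good rate $\lambda_3\le\lambda_4$; one should \emph{not} try to close the $\Phi_{k-1}$ terms by invoking Proposition~\ref{Pro_pasocuatro} at level $n-1$, since that reintroduces a term of order $C_5^{\lceil b_2n\rceil}\lambda_5^{\,n-1}$, roughly $(C_5^{\,b_2}\lambda_5)^n$, which fails to beat $\lambda_5^n$. With the clean bound above, the only price of unrolling is the combinatorial factor $C_*^{\,i}$.

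Finally I specialize and choose $b_2$. Taking $k=n+1$ and $i=1+\lceil b_2n\rceil$ gives $m=n-\lceil b_2n\rceil$ and $\Phi_{n+1}\bigl(n-\lceil b_2n\rceil\bigr)\le C_*^{\,1+\lceil b_2n\rceil}\lambda_4^{\,n-\lceil b_2n\rceil}$; using $\lceil b_2n\rceil\le b_2n+1$ this is at most $C_*^{2}\lambda_4^{-1}\bigl(C_*^{\,b_2}\lambda_4^{\,1-b_2}\bigr)^{n}$, so adding the conversion error $C\lambda^{m}\le C\lambda^{-1}\bigl(\lambda_4^{\,1-b_2}\bigr)^{n}$ yields, for $I\in\widehat{\mathcal{P}}_{n+1}$ with $I\subset J_{n-\lceil b_2n\rceil}$, the bound $\bigl|\log\tfrac{|h(I)|}{|I|}\bigr|\le\bigl(C_*^{2}\lambda_4^{-1}+C\lambda^{-1}\bigr)\bigl(C_*^{\,b_2}\lambda_4^{\,1-b_2}\bigr)^{n}$. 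Since $C_*^{\,b_2}\lambda_4^{\,1-b_2}\to\lambda_4<\lambda_5$ as $b_2\to0^{+}$, I pick $b_2=b_2(\lambda_5)\in(0,b_1)$ small enough that $C_*^{\,b_2}\lambda_4^{\,1-b_2}\le\lambda_5$; for such $b_2$ and $n$ large one also has $\lceil b_2n\rceil\le b_1n$, hence along the induction $m\ge n-\lceil b_2n\rceil\ge(1-b_1)n\ge(1-b_1)(k-1)$, so every invocation of Lemma~\ref{lema_pasocuatro} is within its admissible range. Setting $C_6:=C_*^{2}\lambda_4^{-1}+C\lambda^{-1}>1$ completes the argument. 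The main obstacle is bookkeeping: keeping the indices $k$ (partition level) and $m$ (fundamental domain) consistent so the recursion terminates only on Proposition~\ref{Pro_pasotres}, and verifying that the accumulated constant $C_*^{\,O(b_2n)}$ is absorbed by the slack $\lambda_5>\lambda_4$.
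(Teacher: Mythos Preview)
Your proof is correct and follows essentially the same approach as the paper: a descending induction from $J_n$ to $J_{n-\lceil b_2 n\rceil}$ using Lemma~\ref{lema_pasocuatro} at each step and Proposition~\ref{Pro_pasotres} as the base, accumulating a factor $C_*^{\,i}$ that is absorbed by choosing $b_2$ so that $C_*^{\,b_2}\lambda_4^{1-b_2}\le\lambda_5$. Your formalization via $\Phi_k(m)$ and the explicit two-parameter induction on $i$ (with the admissibility check $m\ge(1-b_1)(k-1)$) makes the argument cleaner than the paper's version, but the content is the same.
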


\begin{proof}[Proof of Proposition \ref{Pro_pasocuatro}]
	The proof is an induction using Proposition \ref{Pro_pasotres} as first step and Lemma \ref{lema_pasocuatro} in every next step.  By Lemma \ref{lema_pasocuatro} and Proposition \ref{Pro_pasotres} we have that for $I\in \widehat{\mathcal{P}}_{n+1}$ with $I \subset J_{n-1}$
		\begin{align*}
		\left| \log \frac{|B_{n-1,g} (h(I))|}{|B_{n-1,f} (I)|} \right| 
		&\leq C_5 \left(\max_{I  \in \widehat{\mathcal{P}}_{n},
			I \subset J_{n-1}}  \left| \log \frac{|h(I)| |I_{n-1}|}{|I||h(I_{n-1})|} \right| +
		\max_{I  \in \widehat{\mathcal{P}}_{n+1},
			I \subset J_{n}}  \left| \log \frac{|h(I)||I_n|}{|I||h(I_n)|} \right|
		+ \lambda_2^{n-1/4} \right)
		\\
		&\leq  C_5 \left( C_4\, \lambda_3^{n-1} + C\, \lambda^{n-1} +
		 C_4\, \lambda_3^{n} + C\, \lambda^{n} + \lambda_2^{n-1/4} \right) \\
		 &\leq 5C_5 C_6\, \lambda_4^{n-1},
	\end{align*}
where $C_6=\max \{C_4,C,1\}$ and recall that $\lambda_4 = \max\{\lambda_3, \lambda, \lambda_2^{1/4}\}$.\\
Now, using inequality above we have that for $I\in \widehat{\mathcal{P}}_{n+1}$ with $I \subset J_{n-2}$
\begin{align*}
	\left| \log \frac{|B_{n-2,g} (h(I))|}{|B_{n-2,f} (I)|} \right| &\leq C_5  \left(\max_{I  \in \widehat{\mathcal{P}}_{n},
		I \subset J_{n-2}}  \left| \log \frac{|B_{n-2,g} (h(I))|}{|B_{n-2,f} (I)|} \right| +
	\max_{I  \in \widehat{\mathcal{P}}_{n+1},
		I \subset J_{n-1}}  \left| \log \frac{|B_{n-1,g} (h(I))|}{|B_{n-1,f} (I)|} \right|
	+ \lambda_1^{n-1/4} \right)\\
	&\leq  C_5 (5C_5C_6 \, \lambda_4^{n-2} + 5C_5C_6 \lambda_4^{n-1} +  \lambda_2^{n-2/4})\\
	&\leq (5C_5)^{2}C_6 \, \lambda_4^{n-2}.
 \end{align*}
Inductively we get that for $I\in \widehat{\mathcal{P}}_{n+1}$ with $I \subset J_{n-l}$, for $k \geq 0$, we have
\begin{align*}
	\left| \log \frac{|B_{n-l,g} (h(I))|}{|B_{n-l,f} (I)|} \right| 
	\leq  C_6 (5 C_5)^k\,  \lambda_4^{n-k}.
\end{align*}
In other words,  for all $I\in \widehat{\mathcal{P}}_{n+1}$ with  $I \subset J_{n-k}$, for $k \geq 0$,
\begin{align*}
	\left| \log \frac{|h(I)|}{|I|} \right| 
	\leq C_6(5C_5)^k\, \lambda_4^{n-k}.
\end{align*}
Given  $\lambda_5 \in (\lambda_4,1)$, we choose  $b_2 =b_2(\lambda_5) \in (0,b_1)$ such that $ (5C_5)^{b_2} \, \lambda_4^{1-b_2}\leq \lambda_5$.
\end{proof}

Let us finish this section saying that we can obtain the same results in sections $6$ and $7$ replacing the critical point $c_0$ by $c_1$ and $J_m(c_0)$ by $J_m(c_1)$.
Moreover, let us mention that it is also possible to get the results in sections $6$ and $7$ for the sequence of classical dynamical partition $\{\mathcal{P}_n\}$, excepts Lemma \ref{lema_pasocuatro}. 

	
	\section{Proof of Main Theorem}\label{sec:proofmaintheorem}
	By Proposition \ref{Pro_pasocuatro}, there exists $C_6>0$, $\lambda_5 \in (0,1)$ and $b_2 \in (0,1)$ such that  for any $I,I' \in \widehat{\mathcal{P}}_{n+1}$ contained in $J_{n - \lceil b_2 n \rceil}(c_i)$, for $i=0$ or $i=1$,  we have that
		\begin{equation}\label{eq:dadaporpro_pasodos}
			\left| \log \frac{|h(I)|}{|I|} - \log \frac{|h(I')|}{|I'|}    \right| \leq C_6\, \lambda_5^{n}
		\end{equation}
	which either are adjacent or are both contained in the same interval of $\widehat{\mathcal{P}}_{n}$. So, to prove our Main Theorem, we just need to obtain  inequality \eqref{eq:dadaporpro_pasodos}  for $I,I' \in\widehat{\mathcal{P}}_{n+1}$, inside the complement of $J_{n - \lceil b_2 n \rceil}(c_0) \cup J_{n - \lceil v_2 n \rceil}(c_1)$, both being adjacent intervals or  contained in the same interval of $\widehat{\mathcal{P}}_{n}$. 
	
	Let  $I,I' \in\widehat{\mathcal{P}}_{n+1}$ such that $I \cup I'$ is not contained in  $J_{n - \lceil b_2 n \rceil}(c_0) \cup J_{n - \lceil b_2 n \rceil}(c_1)$.
	\begin{caseof}
		\case{$I$ and $I'$ are adjacent and also that they are contained in the same interval of $\widehat{\mathcal{P}}_{n}$.} {In this case, $I$ and $I'$ are also contained in the same interval of $\widehat{\mathcal{P}}_{n - \lceil b_2 n \rceil}$, let $\Delta \in \widehat{\mathcal{P}}_{n - \lceil b_2 n \rceil}$ be such interval. Let $\Delta^{*}$ be the union of $\Delta$ with its left and right neighbors in the partition $\widehat{\mathcal{P}}_{n - \lceil b_2 n \rceil}$. Let $j<q_{n - \lceil b_2 n \rceil+1}$ be the smallest natural number such that $f^j|_{\Delta^*}$ is a diffeomorphism with $f^j(\Delta) \subseteq J_{n - \lceil b_2 n \rceil}(c_i)$, either for $i=0$ or $i=1$.
		We observe that $f^j(\Delta^*)$ contains a $\tau$-scaled neighborhood of $f^j(I \cup I')$: by real bounds and Corollary \ref{coro_twolevelwidehatP}, there exist $\widehat{C_2} = \widehat{C_2}(\widehat{C})>1$ and $\widehat{\mu} \in (0,1)$ such that$$\tau \geq \widehat{C_2} \frac{|f^j(\Delta^*)|}{|f^j(I \cup I')|} \geq \widehat{C_2} \frac{(2 + \widehat{C})|f^j(\Delta)|}{
			|f^j(I\cup I')|} \geq \frac{\widehat{C_2} (2+\widehat{C})}{ \widehat{\mu}^{ \lceil b_2 n \rceil}}.$$
		Therefore,
		\begin{equation*}
				\left( \frac{1}{\tau}+1 \right)^2 \leq 	\left(1+ \frac{\widehat{\mu}^{ \lceil b_2 n \rceil}}{\widehat{C_2}(2 + \widehat{C})} \right)^2
				\leq \exp\left( \frac{4}{\widehat{C_2}(2 + \widehat{C})}
				\widehat{\mu}^{ \lceil b_2 n \rceil} \right),
		\end{equation*}
		and by the Koebe Principle there exists $\widehat{C_3} = \widehat{C_3}(\widehat{C_2}, \widehat{C}) > 1$ such that,
			\[ \sup_{x,y \in I \cup I'} \frac{			|Df^j(x)|}{|Df^j(y)|} \leq \exp(\widehat{C_3} \widehat{\mu}^{ \lceil b_2 n \rceil}).
			\]
		If $x \in I$ and $y \in I'$ are given by the Mean Value Theorem, then we have $\left(\frac{|f^j(I)|}{|I|}\right)/\left(\frac{|f^j(I')|}{|I'|}\right) \leq \exp(\widehat{C_3} \widehat{\mu}^{ \lceil b_2 n \rceil})$, which implies
			\begin{equation}\label{eq:fjvsf0}
				\left| \log \frac{|f^j(I)|}{|f^j(I')|} - \log \frac{|I|}{|I'|} \right| \leq \widehat{C_3}\, \widehat{\mu}^{ \lceil b_2 n \rceil}.
			\end{equation}
	Analogously, replacing $f$ by $g$ we obtain
		\begin{equation}\label{eq:gjvsg0}
			\left| \log \frac{|g^j(h(I))|}{|g^j(h(I'))|} - \log \frac{|h(I)|}{|h(I')|} \right| \leq \widehat{C_3}\, \widehat{\mu}^{ \lceil b_2 n \rceil}.
		\end{equation}
	By the triangle inequality combined with inequalities \eqref{eq:fjvsf0}, \eqref{eq:gjvsg0} (applied to $I$ and $I'$), and Inequality \eqref{eq:dadaporpro_pasodos} (applied to $f^j(I)$ and $f^{j}(I')$) we get
		\begin{align*}
		&\left| \log \frac{|h(I)|}{|I|} - \log \frac{|h(I')|}{|I'|} \right|\\
		&\leq \left| \log \frac{|g^j(h(I))|}{|g^j(h(I'))|} - \log \frac{|h(I)|}{|h(I')|} \right|
		+ 	\left| \log \frac{|h(f^j(I))|}{|h(f^j(I'))|} - \log \frac{|f^j(I)|}{|f^j(I')|} \right| +	\left| \log \frac{|f^j(I)|}{|f^j(I')|} - \log \frac{|I|}{|I'|} \right| \\
		&\leq  2\widehat{C_3}\, \widehat{\mu}^{ \lceil b_2 n \rceil} + C_6\, \lambda_5^n\\
		& \leq \widehat{C_4}\, \mu_3^{n},
	\end{align*}
for $\mu_3=\max\{ \lambda_3,\widehat{\mu}^{b_2/(1+b_2)}\}$.
}
\case{$I$ and $I'$ are not adjacent but they  contained in the same interval of $\widehat{\mathcal{P}}_{n}$.}{Let $\Delta \in \widehat{\mathcal{P}}_{n - \lceil b_2 n \rceil}$ be the interval that contains $I$ and $I'$, let $j<q_{n - \lceil b_2 n \rceil+1}$ be the smallest natural number such that $f^j|_{\Delta^*}$ is a diffeomorphism with $f^j(\Delta) \subseteq J_{n - \lceil b_2 n \rceil}(c_i)$, either for $i=0$ or $i=1$.
	The interval $f^j(\Delta^*)$ contains a $\tau$-scaled neighborhood of $f^j(I)$ and of $f^j(I')$, where $\tau \geq \frac{\widehat{C_2} (2+\widehat{C})}{ \widehat{\mu}^{ \lceil b_2 n \rceil}}$. By Koebe Principle, 
	$\sup_{x,y \in I } |Df^j(x)|/|Df^j(y)| \leq \exp(\widehat{C_3} \widehat{\mu}^{ \lceil b_2 n \rceil}).$
	Since  
	\[
	\sup_{\substack{x,y \in I,\\ |Df^j(y)|=1}} |Df^j(x)| \leq \sup_{x,y \in I } \frac{|Df^j(x)|}{|Df^j(y)|},
	\]
	then if $x \in I$ is given by Mean Value Theorem we obtain
	\begin{equation*}
		\left| \log \frac{|f^j(I)|}{|I|} \right| \leq \widehat{C_3}\, \widehat{\mu}^{ \lceil b_2 n \rceil}.
	\end{equation*}
	Analogously, for $x \in I'$  given by Mean Value Theorem we have,
	\begin{equation*}
		\left| \log \frac{|f^j(I')|}{|I'|} \right| \leq \widehat{C_3}\, \widehat{\mu}^{ \lceil b_2 n \rceil}.
	\end{equation*}
	A combination of the two last inequalities gives us Inequality \eqref{eq:fjvsf0} above. The proof follows after a similar argument as in the end of Case 1.
}
\case{$I$ and $I'$ are adjacent but are not contained in the same interval of $\widehat{\mathcal{P}}_{n}$.}{Let $\Delta, \Delta' \in \widehat{\mathcal{P}}_{n - \lceil b_2 n \rceil}$ be the intervals that contains $I$ and $I'$, respectively. Let $j<q_{n - \lceil b_2 n \rceil+1}$ and $i<q_{n - \lceil b_2 n \rceil+1}$ be the smallest natural numbers such that $f^j|_{\Delta^*}$ is a diffeomorphism with $f^j(\Delta) \subseteq J_{n - \lceil b_2 n \rceil}(c_i)$ and $f^i|_{\Delta'^*}$ is a diffeomorphism with $f^i(\Delta') \subseteq J_{n - \lceil b_2 n \rceil}(c_i)$, either for $i=0$ or $i=1$. As above, $f^j(\Delta^*)$ and $f^i(\Delta'^*)$  contains a $\tau$-scaled neighborhood of $f^j(I)$ and of $f^i(I')$, respectively,  where $\tau \geq \frac{\widehat{C_2} (2+\widehat{C})}{ \widehat{\mu}^{ \lceil b_2 n \rceil}}$. Since 
	\[
	\sup_{\substack{x,y \in I,\\ |Df^j(y)|=1}} |Df^j(x)| \leq \sup_{x,y \in I } \frac{|Df^j(x)|}{|Df^j(y)|} \ \text{and} \ 
	\sup_{\substack{u,v \in I',\\ |Df^i(v)|=1}} |Df^i(u)| \leq \sup_{u,v \in I'} \frac{|Df^i(u)|}{|Df^i(v)|},
	\]
	then by Koebe distortion principle and for $x \in I$ and $u \in I'$ given by Mean Value Theorem we obtain
	\begin{equation*}
		\left| \log \frac{|f^j(I)|}{|I|} \right| \leq \widehat{C_3}\, \widehat{\mu}^{ \lceil b_2 n \rceil} \ \text{and} \
		\left| \log \frac{|f^i(I')|}{|I'|} \right| \leq \widehat{C_3}\, \widehat{\mu}^{ \lceil b_2 n \rceil}.
	\end{equation*}
	Analogously,  we obtain
	\begin{equation*}
		\left| \log \frac{|g^j(h(I))|}{|h(I)|} \right| \leq \widehat{C_3}\, \widehat{\mu}^{ \lceil b_2 n \rceil} \ \text{and} \
		\left| \log \frac{|g^i(h(I'))|}{|h(I')|} \right| \leq \widehat{C_3}\, \widehat{\mu}^{ \lceil b_2 n \rceil}.
	\end{equation*}
Therefore,
	\begin{align*}
	\left| \log \frac{|h(I)|}{|I|} \right|	\leq 
	\left| \log \frac{|h(f^j(I))|}{|f^j(I)|} \right| + \left| \log \frac{|f^j(I)|}{|I|} \right| + 	\left| \log \frac{|h(I)|}{|h(f^j(I))|} \right| 
	\leq 2C_6\, \lambda_5^n + \widehat{C_3}\, \widehat{\mu}^{ \lceil b_2 n \rceil} \leq C_7\, \mu_3^{n}.
\end{align*}
Since we also get an equivalent inequality for $I'$, 
then
\begin{align*}
	\left| \log \frac{|h(I)|}{|I|} - \log \frac{|h(I')|}{|I'|}  \right|	 \leq C_7\, \mu_3^{n}.
\end{align*}
}
	\end{caseof}

	\section{Some comments about the multi-critical case}\label{sec:commentsmcm}
We note that the criterion for smoothness given by Proposition \ref{criterion} is quite general, and therefore it is independent of the quantity of critical points.  Let us also recall that Proposition \ref{criterion} is true even if adjacent intervals of the circle partition are not comparable (see also \cite[Remark 4]{KT}). Moreover, the construction of the two-bridges partition can be extend to the multi-critical setting: if the pre-images of the critical points inside $I_n(c_0)$ are all well separated, that means that the exists an interval of the form $[x,f^{q_{n+1}}(x)]$ between them, then the construction is very similar. If some of those pre-images are arbitrarily close to each other (inside or not of the same dynamical interval), we define those pre-images as endpoints of the multi-bridge partition (the endpoints of the classical dynamical interval that contains them will not be endpoints of the multi-bridge partition). In the last case we will have at least one interval in the multi-bridge dynamical partition which has arbitrarily small length. In this case, the multi-bridge partition will not satisfy the real bounds property (Theorem \ref{teobeau}). Despite of that, we can obtain the same estimates given by  Proposition \ref{pro_pasouno} for endpoints  in the multi-bridge partition. However, we do not know if the estimates given by propositions \ref{Pro_pasodos}, \ref{Pro_pasotres} and Lemma \ref{lema_pasocuatro}  are  true for intervals of arbitrarily small length  in the multi-bridge partition. Proposition  \ref{Pro_pasocuatro} is a consequence of the results before.	 
	
	\subsection*{Acknowledgments} The author would like to thank Michael Yampolsky for asking the question that encouraged this paper and also to Konstantine Khanin, Corinna Ulcigrai and Frank Trujillo for useful discussions.

\end{document}